\documentclass{amsart}[12pt]

\usepackage{amsmath, amsthm, amscd, amsfonts,}
\usepackage{amssymb}
\usepackage{todonotes}

\DeclareMathOperator{\otp}{otp}

\DeclareMathOperator{\dom}{dom}
\DeclareMathOperator{\cf}{cf}
\DeclareMathOperator{\Col}{Col}

\DeclareMathOperator{\Lev}{Lev}
\DeclareMathOperator{\supp}{supp}

\DeclareMathOperator{\ZFC}{ZFC}
\DeclareMathOperator{\TSP}{SATP}
\DeclareMathOperator{\GCH}{GCH}

\def\MPB{{\mathbb{P}}}
\def\MQB{{\mathbb{Q}}}

\def\a{\alpha}

\newtheorem{theorem}{Theorem}[section]
\newtheorem{lemma}[theorem]{Lemma}

\newtheorem{corollary}[theorem]{Corollary}
\newtheorem{notation}[theorem]{Notation}
\newtheorem{definition}[theorem]{Definition}

\newtheorem{remark}[theorem]{Remark}
\newtheorem{claim}[theorem]{Claim}
\newtheorem{question}[theorem]{Question}
\numberwithin{equation}{section}

\usepackage{hyperref}
\begin{document}
\title[The Special Aronszajn Tree Property]{The Special Aronszajn Tree Property }
\author[M. Golshani]{Mohammad Golshani}
\address{
School of Mathematics, Institute for Research in Fundamental Sciences (IPM), P.O. Box:
19395-5746, Tehran-Iran.
}
\email[M. Golshani]{golshani.m@gmail.com}

\author[Y. Hayut]{Yair Hayut}
\address{
Universit\"{a}t Wien \\
Kurt G\"{o}del Research Center for Mathematical Logic \\
W\"{a}hringer Stra{\ss}e 25 \\
1090 Wien \\
Austria
}
\email[Y. Hayut]{yair.hayut@mail.huji.ac.il}
\date{}

\thanks{The first author's research has been supported by a grant from IPM (No. 971030417). The second author's research has been supported by the FWF Lise Meitner Grant, 2650-N35}
\begin{abstract}
Assuming the existence of a proper class of supercompact cardinals, we force a generic extension in which, for every regular cardinal $\kappa$, there are $\kappa^+$-Aronszajn trees, and all such trees are special.
\end{abstract}

\thanks{ } \maketitle

\section{Introduction}
Aronszajn trees are of fundamental importance in combinatorial set theory, and two of the most interesting problems
about them, are the problem of their existence (the Tree Property), and the problem of their specialization (the Special Aronszajn Tree Property).

Given a regular cardinal $\kappa$, a \emph{$\kappa$-Aronszajn tree}
is a tree of height $\kappa$, where all of its levels have size less than $\kappa$ and it has no cofinal branches of size $\kappa$. The \emph{Tree Property} at $\kappa$ is the assertion ``there are no $\kappa$-Aronszajn trees''.

By a theorem of K\"{o}nig, the tree property holds at $\aleph_0$, while  by a result of Aronszajn, the tree property fails at $\aleph_1$.
The problem of the tree property at higher cardinals is more complicated and is independent of $\ZFC$. An interesting and famous
question of Magidor asks if the tree property can hold at all regular cardinals bigger than $\aleph_1$, and though the problem
is widely open, there are many works towards a positive answer (a partial list includes \cite{Mitchell1972}. \cite{Abraham1983}, \cite{MagidorShelah1996}, \cite{Neeman2014} and more).

In this paper, we are interested in the problem of specializing Aronszajn trees at successors of regular cardinals.

\begin{definition}
A $\lambda^{+}$-Aronszajn tree $T$, on a successor cardinal $\lambda^{+}$, is \emph{special}, if there exists a function $f\colon T \to \lambda$ such that for every $x, y$ in $T$, if $x <_T y$, then $f(x) \neq f(y)$.
\end{definition}
The specialization function, $f$, witnesses the fact that $T$ has no cofinal branches (as the restriction of $f$ to a cofinal branch is an injective function from a set of size $\lambda^+$ to $\lambda$). Thus, if $T$ is special, then it remains Aronszajn in any larger model of $\ZFC$ in which $\lambda^{+}$ is a cardinal.

For an uncountable regular cardinal $\kappa$, let $\TSP(\kappa)$, the \emph{Special Aronszajn Tree Property} at $\kappa$, be the assertion ``there are
$\kappa$-Aronszajn trees and all such trees are special''.
By Baumgartner-Malitz-Reinhardt \cite{baumgartner}, $\text{MA}+\neg \text{CH}$ implies  $\TSP(\aleph_1)$. Laver-Shelah \cite{laver-shelah} extended this result to get
$\TSP(\kappa^+)$, for $\kappa$ regular, starting from a weakly compact cardinal bigger than $\kappa$. Large cardinals seem to be unavoidable when dealing with specialization of trees of uncountable height, see \cite{Rinot2017}.

In this paper, we force the Special Aronszajn Tree Property at many successors of regular cardinals.
First, we consider the case of forcing the Special Aronszajn Tree Property
 at both $\aleph_1$ and $\aleph_2$, and prove the following theorem.
\begin{theorem}
\label{main theorem2} Assume there exists a weakly compact cardinal. Then there is a generic extension of the universe
in which the Special Aronszajn Tree Property holds at both $\aleph_1$ and $\aleph_2$.
\end{theorem}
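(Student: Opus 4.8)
The plan is to realize both instances of the Special Aronszajn Tree Property by a single Mitchell-style forcing over the weakly compact cardinal $\kappa$, which will become $\aleph_2$ in the extension. I would first force $\GCH$ (if it does not already hold) to fix the ground-model cardinal arithmetic. The target model should satisfy $\kappa=\aleph_2$ and $2^{\aleph_0}=2^{\aleph_1}=\aleph_2$; the failure of CH is forced on us, since $\TSP(\aleph_1)$ implies $2^{\aleph_0}\geq\aleph_2$. I would then define an iteration $\MPB$ of length $\kappa$ whose iterands come in two flavours, interleaved according to a bookkeeping function that anticipates names for Aronszajn trees at both levels: (i) the ccc specialization poset of an $\aleph_1$-Aronszajn tree, with finite conditions mapping into $\omega$, which simultaneously serves to blow up the continuum; and (ii) the $\sigma$-closed specialization poset of an $\aleph_2$-Aronszajn tree, with conditions of countable domain mapping into $\omega_1$, together with the collapses $\Col(\omega_1,\alpha)$ for $\alpha<\kappa$ that drive $\kappa$ down to $\aleph_2$. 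Because I am mixing a ccc iteration with $\sigma$-closed factors, I would not use a plain finite-support iteration (which would collapse $\aleph_1$); instead, conditions carry a finite-support ``Cohen/ccc'' coordinate and a more closed second coordinate in the Mitchell fashion, so that $\MPB$ projects onto its ccc part with a $\sigma$-closed quotient.

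The heart of the verification is the cardinal analysis of $\MPB$. I expect $\MPB$ to be $\kappa$-c.c.: using $\GCH$ in the ground model together with a $\Delta$-system argument on the finite coordinates, any putative antichain of size $\kappa$ reflects, via the weak compactness (tree property, or $\Pi^{1}_{1}$-indescribability) of $\kappa$, to a contradiction at a smaller stage. This keeps all cardinals $\geq\aleph_2$ and makes $\kappa=\aleph_2$, while the collapses ensure every $\alpha<\kappa$ has size $\leq\aleph_1$. Preservation of $\aleph_1$ is where the Mitchell structure earns its keep: I would show that $\MPB$ factors, up to a projection, as a $\sigma$-closed forcing followed by a ccc forcing, so that no new $\omega$-sequences are added over a $\sigma$-closed inner model and $\aleph_1$ survives. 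A nice-names count, using the $\kappa$-c.c.\ and $\kappa^{<\kappa}=\kappa$, gives $2^{\aleph_0}=2^{\aleph_1}=\aleph_2$ in the extension; note that $2^{\aleph_1}=\aleph_2$ yields $\aleph_2^{\aleph_0}=\aleph_2$, which is exactly the arithmetic that makes the $\aleph_2$-specialization posets $\aleph_2$-c.c.\ even though CH fails.

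With the cardinal structure in place, $\TSP(\aleph_1)$ and $\TSP(\aleph_2)$ follow from a catching argument. Any $\aleph_1$- or $\aleph_2$-Aronszajn tree $T$ of the final model is coded by a subset of $\aleph_1$ or of $\kappa$, and by $\kappa$-c.c.\ such a code is added by a bounded initial segment of $\MPB$; arranging the bookkeeping to enumerate all such names and to specialize each of them at some later stage, and observing that a specializing function is never destroyed by later forcing, shows that every tree at either level is special (the ccc specialization handles $\aleph_1$, the $\sigma$-closed one handles $\aleph_2$). Since a special tree has no cofinal branch as long as $\aleph_2$ is preserved, it remains only to guarantee that $\aleph_2$-Aronszajn trees exist at all: here Specker's construction is unavailable, because $\aleph_1^{<\aleph_1}=2^{\aleph_0}=\aleph_2$, so I would explicitly add a special $\aleph_2$-Aronszajn tree by a $\sigma$-closed step after the collapse and note that, being special, it survives as a special $\aleph_2$-Aronszajn tree in the final model. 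The existence of $\aleph_1$-Aronszajn trees is of course a theorem of $\ZFC$.

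The main obstacle, I expect, is reconciling within one iteration the $\sigma$-closure needed to specialize $\aleph_2$-trees (and to preserve $\aleph_1$) with the ccc reals needed to force $\neg\mathrm{CH}$ for $\TSP(\aleph_1)$. The delicate point is that an $\aleph_2$-Aronszajn tree appearing late may be defined using reals added by the ccc part, so its $\sigma$-closed specialization must be performed \emph{above} those reals while the whole forcing still factors as $\sigma$-closed followed by ccc. Getting the Mitchell support and the term-forcing representation right so that both the $\kappa$-c.c.\ and the preservation of $\aleph_1$ go through simultaneously --- and so that the $\Delta$-system computation witnessing $\aleph_2$-c.c.\ of the $\aleph_2$-specialization posets is carried out in a model where $\aleph_2^{\aleph_0}=\aleph_2$ --- is the technical core of the argument. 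Verifying that the bookkeeping genuinely catches trees depending on \emph{both} coordinates, rather than only on the closed part, is the accompanying combinatorial subtlety.
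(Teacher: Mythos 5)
Your overall strategy --- interleave a c.c.c.\ specialization of $\aleph_1$-trees with a $\sigma$-closed specialization of $\aleph_2$-trees over a weakly compact $\kappa$, keep $\aleph_1$ via a Mitchell-style factorization, and get the $\kappa$-c.c.\ from weak compactness --- is in the right family, but there is a concrete gap in the catching argument. You take $\MPB$ of length $\kappa$ and claim that every Aronszajn tree of the final model ``is coded by a subset of $\aleph_1$ or of $\kappa$, and by $\kappa$-c.c.\ such a code is added by a bounded initial segment.'' This is true for subsets of $\aleph_1$ (since $\cf(\kappa)>\aleph_1$), but false for subsets of $\kappa$: a $\kappa$-c.c.\ iteration of length $\kappa$ adds subsets of $\kappa$ lying in no proper initial segment (already $\Add(\omega,\omega_1)$ does this at $\omega_1$). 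In your model $2^{\aleph_2}\geq\aleph_3$, so there are $\kappa^+$ many $\aleph_2$-Aronszajn trees to specialize, and a bookkeeping of length $\kappa$ cannot reach the ones appearing cofinally; no reflection property of $\kappa$ substitutes for acting on each tree individually. This is why the paper (following Laver--Shelah) runs the iteration to length $\delta=\kappa^+$, at the cost of $2^{\aleph_0}=2^{\aleph_1}=\aleph_3$ in the final model; your target arithmetic $2^{\aleph_0}=\aleph_2$ is then unavailable, but it is also not needed for the theorem.

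Second, the issue you correctly flag as the technical core --- that a late $\aleph_2$-tree may depend on reals added by the c.c.c.\ part while its $\sigma$-closed specialization must still sit on the closed side of the factorization --- is not something one ``arranges''; it is where a new forcing is required, and your sketch does not supply it. The paper's device is to specialize \emph{names}: the $\aleph_1$-closed poset $\mathbb{B}_{\MQB}(\dot T)$ consists of countable partial functions $f$ such that $f(s)=f(t)$ implies $\Vdash_{\MQB}$``$s\perp_{\dot T}t$'', where $\MQB$ is the entire future c.c.c.\ iteration; the whole $\aleph_1$-closed iteration $\MPB^2_\delta$ is performed first, in $V$, and the whole finite-support Baumgartner iteration $\MPB^1_\delta$ afterwards, so that one generic function specializes every interpretation of $\dot T$. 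Relatedly, the $\kappa$-c.c.\ of the closed part is not a $\Delta$-system computation plus the arithmetic $\aleph_2^{\aleph_0}=\aleph_2$: two conditions with the same root need not be compatible unless one knows that same-colored nodes are forced incomparable, and establishing this requires the full Laver--Shelah separation argument (maximal antichains of the c.c.c.\ part forcing incompatible branch segments below a reflecting $\lambda$), run relative to names. As it stands, the proof of $\TSP(\aleph_2)$ does not go through.
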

Then we  consider the problem of specializing Aronszajn trees at infinitely many successive cardinals,
and prove the following theorem.
\begin{theorem}
\label{main theorem3}
Assume there are infinitely many supercompact cardinals. Then there is a forcing extension of the universe in which the Special Aronszajn Tree Property
holds at $\aleph_{n}$ for $0 < n < \omega$.
\end{theorem}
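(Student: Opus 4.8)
The plan is to iterate the two-cardinal construction behind Theorem~\ref{main theorem2} along an $\omega$-sequence of supercompact cardinals, the extra strength of supercompactness (over the weak compactness used there) serving precisely to control the interaction among the infinitely many specialization forcings. Fix an increasing sequence $\langle\kappa_n:n<\omega\rangle$ of supercompact cardinals with supremum $\lambda$, put $\kappa_{-1}=\omega$, and begin with a Laver-style preparation making each $\kappa_n$ indestructibly supercompact and installing $\GCH$ in the relevant intervals. I would then build a reverse Easton iteration $\langle\MPB_\a,\dot{\MQB}_\a:\a\le\lambda\rangle$, nontrivial only at the $\kappa_n$: at stage $\kappa_n$ the factor $\dot{\MQB}_n$ collapses $(\kappa_{n-1},\kappa_n)$ onto $\kappa_{n-1}$ --- so that $\kappa_{n-1}$ becomes $\aleph_n$ and $\kappa_n$ becomes $\aleph_{n+1}=\kappa_{n-1}^+$ --- and, over the resulting model, adds by a $<\kappa_{n-1}$-closed poset a function specializing the $\kappa_n$-Aronszajn trees into $\kappa_{n-1}=\aleph_n$. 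For $n=0$ this is the finite-condition ccc specialization over $\Col(\omega,<\kappa_0)$; for $n>0$ it is the $<\kappa_{n-1}$-closed analogue from Theorem~\ref{main theorem2}.

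Granting the framework, $\TSP(\aleph_{n+1})$ splits into existence and universality. For existence I would preserve enough $\GCH$ that $\aleph_n^{<\aleph_n}=\aleph_n$ holds in the final model, whence Specker's theorem supplies a special $\aleph_{n+1}$-Aronszajn tree and the supply of such trees is never exhausted. Downward preservation is then pure closure: the tail $\MPB_{[\kappa_{n+2},\lambda)}$ is $<\kappa_{n+1}$-closed, so it adds no new subset of $\kappa_n$, keeps $\kappa_n=\aleph_{n+1}$ a cardinal, preserves every specializing function already added, and creates no new $\kappa_n$-Aronszajn tree; hence once $\TSP(\aleph_{n+1})$ is secured through stage $\kappa_{n+1}$ it persists to the end.

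The whole difficulty is thus concentrated at the immediately following stage $\kappa_{n+1}$, and this is the main obstacle. Its collapse $\Col(\kappa_n,<\kappa_{n+1})$ and its $\kappa_{n+1}$-specialization are only $<\kappa_n$-closed, so they add fresh subsets of $\kappa_n$ and may manufacture $\kappa_n$-Aronszajn trees that did not exist when $\dot{\MQB}_n$ acted, yet every such tree must end up special. A plain L\'evy collapse has no $\kappa_n$-approximation property, so one cannot simply argue the new trees away; instead I would specialize them by reflection. In $V^{\MPB_{\kappa_n}}$, where $\kappa_n$ is still supercompact (the earlier stages form a forcing of size $<\kappa_n$, so L\'evy--Solovay applies), fix an embedding $j\colon V^{\MPB_{\kappa_n}}\to M$ with $\crit(j)=\kappa_n$ and $M^{\theta}\subseteq M$ for a $\theta$ large enough to capture the stages (those through $\kappa_{n+1}$) that can create $\kappa_n$-Aronszajn trees; using the prepared indestructibility, lift $j$ through $\dot{\MQB}_n$ and the remaining iteration. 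Since $j$ fixes $\kappa_{n-1}$ while $j(\kappa_n)>\kappa_n$, the closure of $M$ captures the later, merely $<\kappa_n$-closed, part of the iteration, and a master condition read off from $j$ forces the stage-$\kappa_n$ specialization to cohere with $j(\dot{\MQB}_n)$, yielding a specializing function into $\kappa_{n-1}$ for every $\kappa_n$-tree of the final model; the same embedding delivers the $\kappa_n$-chain condition that keeps $\kappa_n=\aleph_{n+1}$ a cardinal. It is exactly this use of the strong closure of a supercompactness embedding, to reflect trees created by the tail, that a single weakly compact cardinal --- adequate in Theorem~\ref{main theorem2}, where there is no tail --- cannot provide. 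Organizing these liftings coherently along the full $\omega$-iteration, while maintaining closure, the chain condition, and the $\GCH$ bookkeeping at every level at once, is the technical heart of the proof.
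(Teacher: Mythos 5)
Your architecture --- collapse at stage $\kappa_n$, then specialize the $\kappa_n$-Aronszajn trees of that model, and handle the trees created by the tail via a supercompactness reflection --- has a genuine gap at exactly the point you flag as the technical heart. A specializing function for a $\kappa_n$-Aronszajn tree $T$ must actually be \emph{added by some forcing} in your iteration; if $T$ first appears after stage $\kappa_{n+1}$ (say $T$ is added by $\Col(\kappa_n,<\kappa_{n+1})$, which is only $<\kappa_n$-closed and does add fresh subsets of $\kappa_n$), then no stage of your iteration ever specializes it: the stage-$\kappa_n$ poset acts only on trees available when it is defined, and all later stages specialize only $\kappa_m$-trees for $m>n$. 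A lifted embedding $j$ with a master condition is the right tool for producing \emph{branches} (the tree property) or for chain conditions, but it cannot conjure a function $f\colon T\to\kappa_{n-1}$ that no forcing added; ``coherence with $j(\dot{\MQB}_n)$'' tells you something about $j(\kappa_n)$-trees in $M$, not that $T$ is special in the final model. The same difficulty already occurs one level down in Theorem \ref{main theorem2}: Baumgartner's c.c.c.\ iteration keeps manufacturing new $\aleph_2$-Aronszajn trees after the $\aleph_1$-closed specialization forcing has acted, and no reflection argument repairs that.

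The paper's solution is structural rather than reflective: all collapses are performed at once by the full-support product $\prod_{n}\Col(\kappa_{n-1},<\kappa_n)$ at stage $0$, and the specializations at all levels are interleaved in a single iteration of length $\delta=(\sup_n\kappa_n)^{++}$, where a stage $\beta$ of cofinality $\kappa_n^+$ specializes a \emph{name} $\Phi(\beta)$ for a $\kappa_n$-Aronszajn tree, declaring $f(s)=f(t)$ legal only when the lower, small-c.c.\ part $\MPB_\beta(<\kappa_n)$ \emph{forces} $s\perp t$ --- the mechanism of Subsection \ref{Specializing names for trees}, abstracted in Section \ref{section:abstract}. With $\delta$-c.c.\ and bookkeeping, every tree of the final model is caught as such a name at some stage. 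Supercompactness then enters only through Laver indestructibility: it guarantees that $\kappa_n$ is still weakly compact after the $\kappa_n$-directed closed upper part of the iteration, so that the $\kappa_n$-Knaster argument of Lemma \ref{chain condition lemma of p-2} applies; it is not used to ``reflect trees created by the tail''. Your existence and downward-preservation observations are fine, but without the name-specialization device the universality half of $\TSP(\aleph_{n+1})$ does not go through.
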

The above result can be extended to get the Special Aronszajn Tree Property at all $\aleph_{\alpha+n}$'s, where $\alpha$
is any limit ordinal and $1<n < \omega$. Finally, we use a class-sized iterated forcing construction to get the following result.
\begin{theorem}
\label{main theorem}
Assume  there is a proper  class of supercompact cardinals with no inaccessible limit.
  Then there is a $\ZFC$-preserving class forcing extension of the universe,  in which the Special Aronszajn Tree Property
holds at the successor of every regular cardinal.
\end{theorem}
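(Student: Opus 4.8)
The plan is to build a class-length reverse Easton iteration $\MPB$ whose nontrivial steps sit on the (former) supercompact cardinals, and at each such step to install a set-sized ``block module'' of exactly the kind produced for Theorem~\ref{main theorem3} and its extension to intervals $[\aleph_{\alpha+2},\aleph_{\alpha+\omega})$. Fix the increasing enumeration $\langle \kappa_\alpha : \alpha \in \mathrm{Ord}\rangle$ of the supercompact cardinals. The hypothesis that no inaccessible is a limit of supercompacts guarantees that every limit point of this sequence is a singular cardinal (a limit cardinal failing to be inaccessible), which is precisely what keeps the Easton-support recursion definable and each proper initial segment $\MPB_\gamma$ a set forcing with the right chain condition, so that $\MPB$ is tame and $\ZFC$-preserving. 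Before the main work I would arrange, by a Laver-style preparation guided by Laver functions at each $\kappa_\alpha$, that the supercompactness of every $\kappa_\alpha$ is indestructible under $<\kappa_\alpha$-directed-closed forcing; this is what lets the reflection arguments of Theorem~\ref{main theorem3} (weak compactness of the top of a block after a Lévy collapse) continue to hold at stage $\kappa_\alpha$ after all earlier stages have been applied, using Lévy--Solovay for the small part of $\MPB_{\kappa_\alpha}$ and indestructibility for the directed-closed tail part.

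At stage $\kappa_\alpha$ the poset $\dot{\MQB}_{\kappa_\alpha}$ is the $<\kappa_\alpha$-directed-closed module of size $<\kappa_{\alpha+1}$ that, working below $\kappa_\alpha$, collapses the interval lying above the previous block and simultaneously specializes every Aronszajn tree on the successor of each regular cardinal inside that block, exactly as in the local theorems. The collapses are arranged so that in the final model the former supercompacts and the surviving inaccessibles become the regular limit cardinals, the cardinals introduced by the collapses fill in the successor structure, and each regular $\mu$ has $\mu^{+}$ equal to a cardinal that was weakly compact at the stage where its trees were specialized. The successors of the regular limit cardinals themselves (the former supercompacts and inaccessibles) are treated as the bottom case of the block sitting immediately above them, again specializing off the weak compactness supplied by the next supercompact in the Laver--Shelah manner.

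The verification splits into cardinal preservation and tree preservation. For cardinals, the size and closure of each $\dot{\MQB}_{\kappa_\alpha}$ give $\MPB$ an Easton factorization: below any $\delta$ it is a set forcing whose size and chain condition are controlled by $\delta$, and the tail beginning at the next nontrivial stage $\kappa_{\alpha+1}$ is $<\kappa_{\alpha+1}$-directed-closed. For trees, the crucial point is that once the successors of all regular cardinals in the blocks up to and including $\kappa_\alpha$ have been treated, the tail from $\kappa_{\alpha+1}$ adds no new subset of $\kappa_\alpha^{+}$, since it is closed under sequences of length $\kappa_\alpha^{+}<\kappa_{\alpha+1}$. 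Consequently it adds no new Aronszajn tree and no new cofinal branch at any successor $\mu^{+}\le \kappa_\alpha^{+}$, and it cannot injure a specialization function already placed on such a tree. Hence the $\TSP$ achieved at stage $\kappa_\alpha$ for the successors it handles is permanent, and letting $\alpha$ range over $\mathrm{Ord}$ yields $\TSP(\mu^{+})$ for every regular $\mu$ in the final model.

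I expect the main obstacle to be exactly this tree-preservation step, woven together with the use of indestructibility. One must show both that the highly closed tail introduces no new non-special Aronszajn tree at an already-treated successor, and that at stage $\kappa_\alpha$ the weak compactness needed to run the local specialization genuinely survives the nontrivial initial segment $\MPB_{\kappa_\alpha}$; this is where the Laver preparation and the directed-closure of the tail are used in tandem with a standard factor-and-lift argument for the supercompactness embeddings. A secondary delicate point is the bookkeeping at the singular limit points of the supercompact sequence, where one must check that the seams between consecutive blocks do not produce a $\mu^{+}$-Aronszajn tree escaping every module; such a tree would be reflected into an earlier block by the supercompactness of the relevant $\kappa_\alpha$, and the ``no inaccessible limit'' hypothesis is what makes this reflection and the global tameness of $\MPB$ go through cleanly.
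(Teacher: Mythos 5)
Your overall architecture is the paper's: a reverse Easton class iteration whose stages install, one after another, the $\omega$-block modules of Theorem \ref{main theorem3} (each preceded by a Laver preparation of the next $\omega$ supercompacts), with permanence of $\TSP$ at already-treated successors secured by the high directed closure of the tail; that part of your verification (the tail adds no new subsets of $\kappa_\alpha^{+\omega}$, hence no new Aronszajn trees, no new branches, and no damage to specializing functions) is exactly Section \ref{section4}.

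There is, however, a genuine error in your description of the final cardinal structure, and it matters. You assert that ``in the final model the former supercompacts and the surviving inaccessibles become the regular limit cardinals,'' and that the successors of these regular limit cardinals are then ``treated as the bottom case of the block sitting immediately above them \ldots in the Laver--Shelah manner.'' This is backwards. The method of this paper specializes trees only at \emph{double successors}: the block at a limit stage $\alpha$ collapses each supercompact $\kappa_{\alpha+n}$ to $\aleph_{\alpha+n}$ for $n\ge 2$, so the former supercompacts become successor (indeed double successor) cardinals, not regular limits; and the paper explicitly leaves open $\TSP$ at the successor of an inaccessible (and of a singular), so a ``bottom case'' handling $\lambda^{+}$ for $\lambda$ inaccessible does not exist. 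The hypothesis that no inaccessible is a limit of supercompacts is used precisely to rule this situation out: every limit point $\kappa_\alpha$ ($\alpha$ limit) of the sequence is then singular, so in the final model every regular cardinal above $\aleph_0$ is a successor $\aleph_{\xi+1}$, its successor $\aleph_{\xi+2}$ is a double successor lying strictly inside some block, and the problematic cardinals $\aleph_{\alpha+1}$ for limit $\alpha$ are successors of singulars, which the theorem does not claim to treat. If you instead arranged the collapses so that former supercompacts survive as regular limit cardinals, their successors would be successors of inaccessibles and the proof would break. Relatedly, your appeal to supercompact reflection at the ``seams'' is unnecessary: any $\aleph_{\alpha+n}$-Aronszajn tree of the final model already lives in the extension by $\MPB_\alpha\ast\dot{\MQB}_\alpha$ (by closure of the tail) and is caught there by the bookkeeping function $\Phi$ together with the $\delta$-c.c.\ of the block.
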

Our forcing notions are design to specialize trees at a double successor cardinal, in a way that allow us to specialize trees at many cardinals simultaneously. Using Baumgartner's forcing, we can also specialize all $\aleph_1$-trees. The possibility of specialization of Aronszajn trees at the successor of a singular cardinal or the successor of an inaccessible cardinal remains open.

It is clear that if $T$ is a special $\kappa$-Aronszajn tree, then $T$ is not $\kappa$-\emph{Suslin}; so the problem of making all $\kappa$-Aronszajn trees special is tightly connected to the $\kappa$-Suslin hypothesis, which asserts that there are no $\kappa$-Suslin trees.
Let the \emph{Generalized Suslin Hypothesis} be the assertion ``the $\kappa$-Suslin hypothesis holds at all uncountable regular cardinals $\kappa$''.
The consistency of the Generalized Suslin Hypothesis is an old and major open question in set theory. As a corollary of Theorem \ref{main theorem}, we obtain the following partial answer to it.
\begin{corollary}
Assume there are class many supercompact cardinals with no inaccessible limit.
  Then there is a $\ZFC$-preserving class forcing extension of the universe, in which the Generalized Suslin Hypothesis holds at the successor of every regular cardinal.
\end{corollary}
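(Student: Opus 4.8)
The plan is to simply invoke the model produced by Theorem~\ref{main theorem} and read off the Suslin Hypothesis from the Special Aronszajn Tree Property, using the elementary fact noted in the introduction that a special tree is never Suslin. So I would take the very same $\ZFC$-preserving class forcing extension whose existence is asserted in Theorem~\ref{main theorem}; nothing new needs to be forced, and the entire content of the corollary is a short observation carried out inside that extension.

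Fix, in that extension, a regular cardinal $\mu$ and set $\kappa=\mu^{+}$. By Theorem~\ref{main theorem}, $\TSP(\kappa)$ holds; in particular every $\kappa$-Aronszajn tree is special. I want to conclude that there are no $\kappa$-Suslin trees. First I would recall that a $\kappa$-Suslin tree is, in particular, a $\kappa$-Aronszajn tree: it has height $\kappa$, all of its antichains --- and hence all of its levels, which are antichains --- have size $<\kappa$, and it has no chain of size $\kappa$, so no cofinal branch. Thus any $\kappa$-Suslin tree would fall under the scope of $\TSP(\kappa)$ and would therefore be special.

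The one point that deserves a line of care is the implication \emph{special $\Rightarrow$ not Suslin}. Suppose toward a contradiction that $T$ were a special $\kappa$-Suslin tree, witnessed by $f\colon T\to\mu$ with $f(x)\neq f(y)$ whenever $x<_{T}y$. Then each fiber $f^{-1}(\{i\})$, for $i<\mu$, is an antichain of $T$, and these $\mu$ fibers cover $T$. Since $T$ has height $\kappa=\mu^{+}$ with nonempty levels, $|T|=\kappa>\mu$, so by the pigeonhole principle some fiber $f^{-1}(\{i\})$ has size $\kappa$, giving an antichain of size $\kappa$ and contradicting the Suslin property of $T$. Hence no special tree is Suslin, and combined with the previous paragraph there is no $\kappa$-Suslin tree at all; that is, the $\kappa$-Suslin Hypothesis holds at $\kappa$.

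Since $\mu$ was an arbitrary regular cardinal and $\kappa=\mu^{+}$ ranges over all successors of regular cardinals, the Generalized Suslin Hypothesis holds at the successor of every regular cardinal in this extension, as desired. I expect no genuine obstacle beyond Theorem~\ref{main theorem} itself: all of the difficulty is concentrated in achieving the specialization, while the passage from specialization to the Suslin Hypothesis is exactly the elementary counting argument above.
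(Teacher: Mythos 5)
Your proposal is correct and is exactly the route the paper takes: the corollary is obtained by combining Theorem \ref{main theorem} with the observation (stated in the introduction) that a special $\kappa$-Aronszajn tree cannot be $\kappa$-Suslin, together with the fact that every $\kappa$-Suslin tree is $\kappa$-Aronszajn. Your pigeonhole argument correctly fills in the elementary details that the paper leaves implicit.
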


The paper is organized as follows. In Section \ref{section2} we prove Theorem \ref{main theorem2}. To do this, we first introduce Baumgartner's forcing
 for specializing $\aleph_1$-Aronszajn trees, and discuss some of its basic properties. Then we introduce a new forcing notion, which specializes \emph{names} for $\aleph_2$-Aronszajn trees, and show that it has many properties in common with the Laver-Shelah forcing for specializing  $\aleph_2$-Aronszajn trees. Finally we show how the above results can be combined to define a forcing iteration which gives the proof of Theorem \ref{main theorem2}. This part contains almost all technical difficulties which appear in the general case.
 
In Section \ref{section:abstract}, we restate the main technical lemmas of Section \ref{section2} in a general way which is suitable for the purposes of Section \ref{section3} and Section \ref{section4}.
 In Section \ref{section3} we prove Theorem \ref{main theorem3}, and finally, in Section \ref{section4}, we show how to iterate the forcing notion of section \ref{section3} to prove Theorem \ref{main theorem}.

Our notations are mostly standard. For facts about forcing and large cardinals we refer the reader to \cite{jech2003}.

We force downwards and we always assume that our forcing notions are separative, namely for pair of conditions $p, q$ in a forcing notion $\mathbb{P}$, $p \leq q$ means that $p$ is stronger than $q$ and equivalently $p \Vdash q\in \dot{G}$ (where $\dot{G} = \{ \langle p, \check{p}\rangle \mid p \in \mathbb{P}\}$ is the canonical name for the generic filter). Also if $\MPB$ is a forcing notion in the ground model $V$, when writing $V[G_{\MPB}]$, we assume $G_{\MPB}$ is a $\MPB$-generic filter over $V$.
\section{The Special Aronszajn Tree Property at \texorpdfstring{$\aleph_1$}{aleph1} and \texorpdfstring{$\aleph_2$}{aleph2}}
\label{section2}
In this section we prove Theorem \ref{main theorem2}. In Subsection \ref{Baumgartner forcing for specializing trees}, we review Baumgartner's  forcing for specializing
$\aleph_1$-Aronszajn trees. In Subsection \ref{Specializing names for trees}, we introduce a  forcing notion for specializing names of $\aleph_2$-Aronszajn trees.
The forcing is a variant of the Laver-Shelah forcing \cite{laver-shelah}, where instead of specializing $\aleph_2$-Aronszajn trees, we specialize names of $\aleph_2$-Aronszajn trees.
In Subsection \ref{Definition of the main forcing}, we define the main forcing iteration, and in Subsection \ref{Properties of the forcing notion P}, we prove its basic properties. The main technical part
is to show that the forcing iteration satisfies the $\kappa$-chain condition, where $\kappa$ is the weakly compact cardinal we start with.
Finally in Subsection \ref{Completing the proof of Theorem main theorem2} we complete
the proof of Theorem \ref{main theorem2}.
\subsection{Baumgartner's  forcing for specializing \texorpdfstring{$\aleph_1$}{aleph1}-Aronszajn trees}
\label{Baumgartner forcing for specializing trees}
In this subsection we briefly review Baumgartner's forcing for specializing $\aleph_1$-Aronszajn trees,
and refer to \cite{baumgartner1983} for more details on the results of this subsection.
\begin{definition}
Let $T$ be an $\aleph_1$-Aronszajn tree. The conditions in Baumgartner's  forcing for specializing $T$, $\mathbb{B}(T)$,
are partial functions $f: T \to \omega$ such that
\begin{enumerate}
\item $\dom(f) \subseteq T$ is finite.
\item If $s, t \in \dom(f)$ and $s <_T t$, then $f(s) \neq f(t)$.
\end{enumerate}
The order on $\mathbb{B}(T)$ is the reverse inclusion.
\end{definition}
Let us state the basic properties of the forcing notion $\mathbb{B}(T)$. The proof of the following lemma can be found in \cite[page 274]{jech2003}
\begin{lemma}
\label{basic properties of B(T)}
\begin{itemize}
\item [(a)] $\mathbb{B}(T)$ is c.c.c.

\item [(b)] In the generic extension by $\mathbb{B}(T)$, the tree $T$ is specialized; in fact if $G$ is $\mathbb{B}(T)$-generic over the ground model $V$,
then $F= \bigcup G$ is a specializing function from $T$ to $\omega$.
\end{itemize}
\end{lemma}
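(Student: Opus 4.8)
The plan is to treat the two parts separately: (b) is a routine density argument, while the substance of the lemma is the countable chain condition in (a), so I would spend almost all the effort there.

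For (b), since $G$ is a filter any two of its elements are compatible, hence agree on the overlap of their domains, so $F=\bigcup G$ is a well-defined partial function from $T$ to $\omega$. I would first show $F$ is total by checking that for each $t\in T$ the set $D_t=\{f\in\mathbb{B}(T): t\in\dom(f)\}$ is dense: given any $f$ with $t\notin\dom(f)$, the condition $f$ uses only finitely many values on the finitely many nodes of $\dom(f)$ that are $<_T$-comparable to $t$, so as $\omega$ is infinite I may pick a value $k$ avoiding all of them and pass to $f\cup\{(t,k)\}\in\mathbb{B}(T)$; genericity then yields $t\in\dom(F)$. Finally, if $x<_T y$, I pick $f,g\in G$ with $x\in\dom(f)$, $y\in\dom(g)$ and, using that $G$ is directed, a single $h\in G$ below both; then $x,y\in\dom(h)$ and, $h$ being a condition, $F(x)=h(x)\ne h(y)=F(y)$. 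Hence $F$ specializes $T$.

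For (a), suppose towards a contradiction that $\{p_\alpha:\alpha<\omega_1\}\subseteq\mathbb{B}(T)$ is an antichain. I would apply the $\Delta$-system lemma to the finite sets $\dom(p_\alpha)$ to get an uncountable $W$ whose domains form a $\Delta$-system with root $r$; since $r$ is finite there are only countably many possibilities both for $p_\alpha\restriction r$ and for the finite pattern of values, so after thinning I may assume that all $p_\alpha$ agree on $r$, that $|\dom(p_\alpha)|=n$ is fixed with a uniform level-increasing enumeration $t^\alpha_0,\dots,t^\alpha_{n-1}$, and that each value $p_\alpha(t^\alpha_i)=c_i$ is independent of $\alpha$. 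After the $\Delta$-system step the non-root parts $\dom(p_\alpha)\setminus r$ are pairwise disjoint, and using this together with the countability of the levels of $T$ I would thin once more to a subsequence along which these non-root parts are strictly level-separated: for $\nu<\mu$ every node of $\dom(p_{\alpha_\nu})\setminus r$ lies below, in level, every node of $\dom(p_{\alpha_\mu})\setminus r$. With these normalizations the only way $p_{\alpha_\nu}\cup p_{\alpha_\mu}$ can fail to be a condition (for $\nu<\mu$) is that some non-root node $s$ of the lower condition is $<_T$ some non-root node $t$ of the higher one with $s,t$ of equal value: a conflict inside a single condition is excluded because it is a condition, and any conflict involving a node of $r$ already occurs inside one of the two conditions since they agree on $r$.

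The crux, and the only place where the Aronszajn hypothesis is used, is to derive an uncountable branch of $T$ from the assumption that every pair is incompatible, contradicting that $T$ is Aronszajn; I expect this branch extraction to be the main obstacle. The mechanism is transparent when each condition has a single non-root node $x_\alpha$ of the common value: pairwise incompatibility then forces $x_{\alpha_\nu}<_T x_{\alpha_\mu}$ for all $\nu<\mu$, so $\{x_{\alpha_\nu}:\nu<\omega_1\}$ is literally an uncountable chain, i.e. a cofinal branch, which is impossible. When there are several non-root nodes the witnessing position can vary with the pair, and handling this is the delicate part: I would colour each pair $\{\nu,\mu\}$ by a witnessing pair of positions (finitely many colours) and combine the partition relation $\omega_1\to(\omega_1,\omega+1)^2$ with the tree structure — repeatedly pigeonholing the lower witness, which ranges over the finite non-root part of the lower condition, and using that the set of predecessors of a finite set is a finite union of countable chains — to build an uncountable $<_T$-increasing sequence of nodes. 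Producing this chain, and in particular arranging the recursion so that its limit stages are not prematurely capped, is the technical heart of the argument; once it is in place the contradiction is immediate, giving the c.c.c., and the same analysis is expected to yield the stronger property K (every uncountable set of conditions has an uncountable pairwise-compatible subset).
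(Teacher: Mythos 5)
Part (b) and your $\Delta$-system preprocessing for part (a) are fine and agree with the paper (which dismisses (b) as a density argument and sets up (a) exactly as you do, minus your harmless extra level-separation step). The problem is that you have left the one essential step --- extracting an uncountable branch of $T$ from the assumption that all pairs are incompatible --- as an acknowledged sketch, and the tools you name do not close it. Colouring each pair $\{\nu,\mu\}$ by a witnessing pair of positions and applying $\omega_1\to(\omega_1,\omega+1)^2$ colour by colour gives a dichotomy: either some colour admits an uncountable homogeneous set (which would indeed yield an uncountable chain), or you only obtain a homogeneous set of order type $\omega+1$, which produces a \emph{countable} chain below a single node --- no contradiction in an Aronszajn tree. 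Nothing forces the first alternative, and the transfinite recursion you describe runs into exactly the limit-stage obstruction you mention (countable chains in an Aronszajn tree need not have upper bounds), with no mechanism offered to avoid it. The paper closes this gap differently: it fixes a uniform ultrafilter $U$ on the antichain $A$; for each $f\in A$ the sets $A_{f,k,l}=\{g: t^f_k \text{ and } t^g_l \text{ comparable},\ f(t^f_k)=g(t^g_l)\}$ cover $A$ up to one point, so some $A_{f,k,l}\in U$; after fixing $(k,l)$ on an uncountable $A'$, any $f,g\in A'$ admit $h\in A_{f,k,l}\cap A_{g,k,l}\in U$ with $t^h_l$ strictly above both $t^f_k$ and $t^g_k$, so the nodes $t^f_k$, $f\in A'$, are pairwise comparable --- an uncountable branch. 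The point is that the ultrafilter makes all the relevant uncountable sets pairwise intersecting with a \emph{common} upper position $l$, which is precisely what your pigeonholing of the lower witness alone cannot guarantee (the sets $W_\nu$ of ``partners above $x^\nu_{i^*}$'' are uncountable but need not even meet).

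Your closing remark that the same analysis should give property K is false in ZFC and is a useful warning sign: if $T$ is Suslin, the conditions $\{(x_\alpha,0)\}$ for $x_\alpha$ ranging over an uncountable set of nodes are pairwise compatible exactly when the corresponding nodes are incomparable, so an uncountable pairwise-compatible subfamily would be an uncountable antichain of $T$. Hence $\mathbb{B}(T)$ is c.c.c.\ but not Knaster for Suslin $T$, and any argument for (a) that proceeds by thinning to an uncountable pairwise-compatible subfamily proves too much. The correct argument must, as the paper's does, exploit that \emph{every} pair of the given family is incompatible in order to manufacture the branch.
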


\begin{definition}
\label{def:B(V)}
Baumgartner's forcing for specializing all $\aleph_1$-Aronszajn trees, $\mathbb{P}$, is defined as the finite support iteration
\[
\mathbb{P} = \langle   \langle  \MPB_\alpha \mid \alpha \leq  2^{\aleph_1}  \rangle,  \langle \dot{\MQB}_\alpha \mid \alpha < 2^{\aleph_1}   \rangle \rangle
\]
of forcing notions where
\begin{enumerate}
\item For each $\alpha < 2^{\aleph_1}, \Vdash_{\MPB_\alpha}$``$\dot{\MQB}_\alpha =\mathbb{B}(\dot{T}_\alpha)$'', for some $\MPB_\alpha$-name
$\dot{T}_\alpha$ which is forced by $1_{\MPB_\alpha}$ to be an $\aleph_1$-Aronszajn tree.

\item If $\dot{T}$ is a $\mathbb{P}$-name for an $\aleph_1$-Aronszajn tree, then for some $\alpha < 2^{\aleph_1}, \dot{T}$ is a $\MPB_\alpha$-name
and $\Vdash_{\MPB_\alpha}$``$\dot{T}=\dot{T}_\alpha$''.
\end{enumerate}
\end{definition}

Let us mention some basic properties of $\mathbb{P}$.
\begin{lemma} \label{basic properties of Baumgartner P}
\begin{itemize}
\item [(a)] $\mathbb{P}$ is c.c.c.

\item [(b)] In the generic extension by $\mathbb{P}, 2^{\aleph_0}=(2^{\aleph_1})^V$ and all $\aleph_1$-Aronszajn trees are specialized.
\end{itemize}
\end{lemma}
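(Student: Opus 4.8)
The plan is to handle the two parts separately, leaning on the general theory of finite support iterations of c.c.c.\ forcing together with Lemma~\ref{basic properties of B(T)}. For part (a), the key point is that every iterand is forced to be c.c.c. By clause (1) of Definition~\ref{def:B(V)} we have $\Vdash_{\MPB_\alpha}$``$\dot{\MQB}_\alpha = \mathbb{B}(\dot T_\alpha)$'', where $\dot T_\alpha$ is forced to be an $\aleph_1$-Aronszajn tree, so applying Lemma~\ref{basic properties of B(T)}(a) inside $V[G_{\MPB_\alpha}]$ yields $\Vdash_{\MPB_\alpha}$``$\dot{\MQB}_\alpha$ is c.c.c.''. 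I would then invoke the classical Solovay--Tennenbaum theorem that a finite support iteration of forcing notions each forced to be c.c.c.\ is itself c.c.c. This gives that $\MPB$ is c.c.c.\ and hence preserves all cardinals and cofinalities; in particular $\aleph_1$ is preserved.

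For the value of the continuum in part (b), I would argue by two inequalities. For the upper bound, a routine induction (using $(2^{\aleph_1})^{\aleph_1} = 2^{\aleph_1}$ in $V$, and that each $\mathbb{B}(\dot T_\alpha)$ is forced to have size $\aleph_1$) shows $|\MPB_\alpha| \le (2^{\aleph_1})^V$ for all $\alpha \le 2^{\aleph_1}$; since $\MPB$ is c.c.c., every real in $V[G]$ has a nice name built from countably many countable antichains, so the number of reals is at most $|\MPB|^{\aleph_0} \le (2^{\aleph_1})^{\aleph_0} = 2^{\aleph_1}$ as computed in $V$, giving $2^{\aleph_0} \le (2^{\aleph_1})^V$ in $V[G]$. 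For the lower bound I would use the standard fact that a finite support iteration of nontrivial forcing notions of length $\delta$ adds at least $|\delta|$ reals, Cohen reals being added at the limit stages of the support; with $\delta = 2^{\aleph_1}$ this yields $2^{\aleph_0} \ge (2^{\aleph_1})^V$. Combining the two inequalities gives $2^{\aleph_0} = (2^{\aleph_1})^V$.

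For the specialization statement, let $T \in V[G]$ be an arbitrary $\aleph_1$-Aronszajn tree. Since $\MPB$ is c.c.c.\ and $|\MPB| \le (2^{\aleph_1})^V$, and $T$ lives on a set of size $\aleph_1$, the tree $T$ has a nice name; after modifying it below a condition forcing ``$\dot T$ is $\aleph_1$-Aronszajn'' so that it is forced by $1_{\MPB}$ to be such a tree, clause (2) of Definition~\ref{def:B(V)} locates it at some stage $\alpha < 2^{\aleph_1}$ with $\Vdash_{\MPB_\alpha}$``$\dot T = \dot T_\alpha$''. The stage-$\alpha$ iterand $\dot{\MQB}_\alpha = \mathbb{B}(\dot T_\alpha)$ then adjoins a specializing function $F\colon T \to \omega$ by Lemma~\ref{basic properties of B(T)}(b), and this $F$ remains a specialization in $V[G]$ because the defining property ``$x <_T y \Rightarrow F(x) \neq F(y)$'' is upward absolute; since $\MPB$ preserves $\aleph_1$, the tree $T$ is still $\aleph_1$-Aronszajn in $V[G]$, hence special.

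The main obstacle is the bookkeeping underlying clause (2): one must verify that the enumeration of names over the $2^{\aleph_1}$ stages genuinely catches every name that can code an $\aleph_1$-Aronszajn tree of the final model. This rests on the counting estimate that there are at most $(2^{\aleph_1})^V$ nice $\MPB$-names for such trees (so a length of $2^{\aleph_1}$ suffices), together with the care needed to reduce an arbitrary name for a tree that becomes Aronszajn only in $V[G]$ to one that is forced by $1_{\MPB}$ to be Aronszajn, so that it falls within the scope of clause (2). Everything else is a direct application of the standard preservation theorems for finite support c.c.c.\ iterations.
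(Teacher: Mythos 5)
Your proof is correct and follows essentially the same route as the paper: part (a) via Lemma \ref{basic properties of B(T)}(a) plus the Solovay--Tennenbaum preservation theorem for finite support c.c.c.\ iterations, and part (b) via Lemma \ref{basic properties of B(T)}(b) together with the bookkeeping clause (2) of Definition \ref{def:B(V)}. The paper leaves the cardinal-arithmetic computation of $2^{\aleph_0}$ and the nice-name counting implicit, and your write-up simply supplies those standard details.
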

\begin{proof}
$($a$)$ Follows from Lemma \ref{basic properties of B(T)}$($a$)$ and the Solovay-Tennenbaum theorem that the finite support iteration of c.c.c.\ forcing notions is c.c.c., \cite{SolovayTennenbaum}.

$($b$)$ Follows from  Lemma \ref{basic properties of B(T)}$($b$)$  and Definition \ref{def:B(V)}(2).
\end{proof}

In the above definition of $\mathbb{P}$, we used some underlying bookkeeping method which was used in order to pick the names $\dot{T}_\alpha$. We will need a minor generalization of this. Let $\mathcal{T}$ be a function such that for every c.c.c.\ forcing notion $\mathbb{R}$, $\mathcal{T}(\mathbb{R})$ is an $\mathbb{R}$-name for an $\aleph_1$-Aronszajn tree. We do not require that every name for an $\aleph_1$-Aronszajn tree is enumerated by $\mathcal{T}$. Let
$$\mathbb{P}_\gamma(\mathcal{T})=\langle   \langle  \mathbb{P}_\alpha(\mathcal{T}) \mid \alpha \leq  \gamma  \rangle,  \langle \dot{\MQB}_\alpha(\mathcal{T}) \mid \alpha <\gamma   \rangle \rangle$$
be the finite support iteration of forcing notions of length $\gamma$, where for each $\alpha < \gamma$, $\Vdash_{\mathbb{P}_\a(\mathcal{T})}$``$\dot{\MQB}_\alpha(\mathcal{T}) =\mathbb{B}(\mathcal{T}(\mathbb{P}_\alpha(\mathcal{T}))$''.

Note that for every $\mathcal{T}$ as above and every ordinal $\gamma$, $\mathbb{P}_\gamma(\mathcal{T})$ is c.c.c.,\ as a finite support iteration of c.c.c.\ forcing notions.

The following lemma will be used in the course of proving Theorem \ref{main theorem2}.
\begin{lemma}
\label{lemma: not adding new branches by baumgartner forcing}
Let $\mathcal{T}$ be as above. Let $S$ be a tree of height $\omega_1$ and arbitrary width and let $\gamma$ be an ordinal. Then $\mathbb{P}_\gamma(\mathcal{T})$ does not introduce new branches to $S$.
\end{lemma}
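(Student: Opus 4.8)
The plan is to argue by contradiction and to reduce everything to the \emph{productive} chain condition of the iteration. Here ``branch'' should be read as cofinal branch, i.e.\ of length $\omega_1$: a countable branch is a countable chain and, since $\mathbb{P}_\gamma(\mathcal{T})$ adds reals, genuinely new ones of countable length can appear, whereas it is precisely the cofinal branches that matter for the applications. So suppose some $p\in\mathbb{P}_\gamma(\mathcal{T})$ forces that $\dot b$ is a cofinal branch of $\check S$ that lies outside the ground model; since $\mathbb{P}_\gamma(\mathcal{T})$ is c.c.c.\ and preserves $\omega_1$, this is the only obstruction. The key observation is that plain c.c.c.\ is \emph{not} enough, since forcing with a Suslin tree is c.c.c.\ and adds a cofinal branch to that very tree; what saves us is that $\mathbb{P}_\gamma(\mathcal{T})$ is productively c.c.c.

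\textbf{Step 1 (the square is c.c.c.).} I would first show that $\mathbb{P}_\gamma(\mathcal{T})\times\mathbb{P}_\gamma(\mathcal{T})$ is c.c.c. Because conditions in the iteration have finite support and each coordinate is a finite partial function, this follows by running the $\Delta$-system plus ultrafilter argument of Lemma~\ref{basic properties of B(T)}(a) on \emph{pairs} of conditions: given a putative uncountable antichain in the product, refine by a $\Delta$-system on the (finite) supports and coordinate domains so that all conditions share one pattern; incompatibility of two pairs means a colour clash in one of the two coordinates, and the ultrafilter argument concentrates these clashes in a single coordinate and a single pair of positions, which then manufactures an uncountable chain, hence a cofinal branch, in the Aronszajn tree being specialized at that coordinate, a contradiction. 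Equivalently, $\mathbb{B}(T)\times\mathbb{B}(T)$ is c.c.c.\ for every Aronszajn $T$, and $\mathbb{P}_\gamma(\mathcal{T})\times\mathbb{P}_\gamma(\mathcal{T})$ is (forcing equivalent to) the finite support iteration of these squares, so the Solovay--Tennenbaum theorem applies.

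\textbf{Step 2 (a c.c.c.\ square kills new cofinal branches).} I would work with two mutually generic copies $G,H$ and the two interpretations $b^G,b^H$ of $\dot b$. First I would show $(p,p)\Vdash b^G\neq b^H$: if some $(q,r)\le(p,p)$ forced $b^G=b^H$, then by mutual genericity $q$ would have to decide $\dot b(\alpha)$ for every $\alpha$ (otherwise two extensions of $q$ give distinct values, each forced through $r$ to equal $\dot b(\alpha)[H]$), so $q$ would force $\dot b$ into the ground model, contradicting newness. As $b^G,b^H$ are cofinal branches of the same ground-model tree, $(p,p)$ forces them to diverge. I would then build an uncountable antichain $\langle(q_\beta,r_\beta):\beta<\omega_1\rangle$ in the square, with increasing levels $\delta_\beta$ and nodes $u_\beta\neq v_\beta$ at level $\delta_\beta$ such that $q_\beta\Vdash\dot b(\delta_\beta)=u_\beta$ and $r_\beta\Vdash\dot b(\delta_\beta)=v_\beta$. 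The device that forces incompatibility is to make the divergence happen \emph{late}: at stage $\beta$ put $\delta^{*}=\sup_{\alpha<\beta}\delta_\alpha$, take a \emph{single} condition below $p$ deciding $\dot b(\delta^{*})$ (which fixes $\dot b\restriction\delta^{*}$), and then invoke newness below it to split $\dot b$ at some $\delta_\beta>\delta^{*}$, so that $u_\beta\restriction\delta^{*}=v_\beta\restriction\delta^{*}$. For $\alpha<\beta$, compatibility of $(q_\alpha,r_\alpha)$ with $(q_\beta,r_\beta)$ would force $u_\alpha=u_\beta\restriction\delta_\alpha=v_\beta\restriction\delta_\alpha=v_\alpha$, contradicting $u_\alpha\neq v_\alpha$. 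Thus the pairs are pairwise incompatible, contradicting Step 1.

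I expect the main obstacle to be Step 1, establishing productive c.c.c.\ rather than merely c.c.c.: this is exactly where the combinatorics of Baumgartner's finite conditions is essential (a clash always yields a branch in an Aronszajn tree), and where one must keep track, by induction on $\gamma$, that each tree being specialized remains Aronszajn in the relevant intermediate extension so that the $\Delta$-system argument stays available. Once the square is known to be c.c.c., Step 2 is a soft and purely general argument.
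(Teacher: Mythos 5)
Your overall architecture coincides with the paper's: reduce the lemma to the assertion that $\mathbb{P}_\gamma(\mathcal{T})\times\mathbb{P}_\gamma(\mathcal{T})$ is c.c.c., and then invoke the fact that a forcing with c.c.c.\ square adds no new cofinal branch to a tree of height $\omega_1$. Your Step~2 is a correct, self-contained proof of that second fact, which the paper simply cites as \cite[Lemma 1.3]{Unger2015}, and your reading of ``branch'' as \emph{cofinal} branch is the intended one. The problem is Step~1, which is exactly where the paper also has to work.

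The gap is your identification of $\mathbb{P}_\gamma(\mathcal{T})\times\mathbb{P}_\gamma(\mathcal{T})$ with ``the finite support iteration of the squares $\mathbb{B}(T)\times\mathbb{B}(T)$.'' In the product the two coordinates interpret the name $\dot T_\alpha$ by \emph{different} generics: the $\alpha$-th iterand is $\mathbb{B}\bigl(\dot T_\alpha[G_\alpha]\bigr)\times\mathbb{B}\bigl(\dot T_\alpha[H_\alpha]\bigr)$, a product of Baumgartner forcings for two distinct trees living in $V[G_\alpha\times H_\alpha]$. This is indeed $\mathbb{B}$ of a disjoint union and is c.c.c.\ over $V[G_\alpha\times H_\alpha]$ \emph{provided} neither tree has acquired an uncountable branch from the other copy's generic --- but that is precisely an instance of the lemma being proved, now for $\mathbb{P}_\alpha$ over the intermediate model $V[G_\alpha]$ rather than over $V$. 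Running your Step~2 there requires $\mathbb{P}_\alpha\times\mathbb{P}_\alpha$ to be c.c.c.\ \emph{in $V[G_\alpha]$}, i.e.\ the cube to be c.c.c.\ in $V$, and so on: the inductive hypothesis ``the square is c.c.c.''\ does not close on itself. You flag that one must check the trees ``remain Aronszajn in the relevant intermediate extension,'' but the natural way to check it forces the induction to escalate to all finite powers of $\mathbb{P}_\alpha$ simultaneously (which can be made to work, but is a different and heavier induction than the one you set up). The paper sidesteps this entirely: it rewrites the square as a single iteration $\mathbb{P}_{\gamma+\gamma}(\mathcal{T}')$ of length $\gamma+\gamma$ arranged so that every tree treated in the second block has \emph{already been specialized} in the first block; a special tree is Aronszajn in every $\omega_1$-preserving extension, so each iterand of the second block is c.c.c.\ outright and Solovay--Tennenbaum applies with no circularity. (A further, smaller issue: a global ``$\Delta$-system on pairs'' across the whole iteration is not available verbatim, since the coordinates of a condition in a finite support iteration are names rather than actual finite functions; the chain condition must be obtained stage by stage.) So either strengthen your induction to all finite powers, or adopt the paper's re-indexing device exploiting specialness.
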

\begin{proof}
Let us show that $\mathbb{P}_\gamma(\mathcal{T}) \times \mathbb{P}_\gamma(\mathcal{T})$ is c.c.c. Let $\mathcal{T}'$ be the following function:
\begin{itemize}
\item If $\alpha < \gamma$, then $\mathcal{T}'(\mathbb{P}_\alpha(\mathcal{T}'))=\mathcal{T}(\mathbb{P}_\alpha(\mathcal{T}))$. In particular,
$\mathbb{P}_\alpha(\mathcal{T}') \cong \mathbb{P}_\alpha(\mathcal{T})$, for all $\alpha \leq \gamma$.
\item If $\gamma \leq \alpha < \gamma+\gamma$, and if $\beta < \gamma$
is such that $\alpha=\gamma+\beta$, then $\mathcal{T}'(\mathbb{P}_\alpha(\mathcal{T}'))=\mathcal{T}(\mathbb{P}_\beta(\mathcal{T}))$.
\end{itemize}
Note that if $\alpha=\gamma+\beta$, where $\beta < \gamma$, then $\Vdash_{\mathbb{P}_\alpha(\mathcal{T}')}$``$\mathcal{T}'(\mathbb{P}_\alpha(\mathcal{T}'))$ is a special Aronszajn tree'', and in particular it is Aronszajn.
It then follows that  the forcing iteration $\mathbb{P}_{\gamma+\gamma}(\mathcal{T}')$ is c.c.c., and by the definition of $\mathcal{T}'$, one can easily verify that $\mathbb{P}_\gamma(\mathcal{T}) \times \mathbb{P}_\gamma(\mathcal{T}) \cong \mathbb{P}_\gamma(\mathcal{T}) \ast \dot{\mathbb{P}}_\gamma(\mathcal{T}) \cong \mathbb{P}_{\gamma+\gamma}(\mathcal{T}')$. The lemma follows from \cite[Lemma 1.3]{Unger2015}.
\end{proof}

The following definition appears in the literature under various names and notations. For an example in which the following concept is used extensively, see \cite{ShelahThomas1997}. 
\begin{definition}
Let $\langle \MPB_\alpha, \dot{\MQB}_\beta \mid \beta < \delta,\,\alpha \leq \delta\rangle$ be a $<\mu$-support iteration of forcing notions,  and let $I \subseteq \delta$. We define $\MPB_I$, by induction on $\otp(I)$, to be the $<\mu$-support iteration $\MPB_I=\langle \MPB_{I \cap \alpha},\, \dot{\MQB}_{I \cap \beta} \mid \beta \in I,\, \alpha \in I \cup \{\sup(I) + 1\}\rangle$ of forcing notions, such that:
\begin{enumerate}
\item If  $\dot{\MQB}_\beta$ is forced by the weakest condition of $\MPB_{\beta}$ to be equivalent to a specific $\MPB_{I \cap \beta}$-name, then $\dot{\MQB}_{I \cap \beta}$ is such a $\MPB_{I \cap \beta}$-name.
\item Otherwise $\Vdash_{\MPB_{I \cap \beta}}$``$\dot{\MQB}_{I \cap \beta}$ is the trivial forcing''.
\end{enumerate}
We say that $\MPB_I$ is a sub-iteration of $\MPB$ if the second case does not occur.
\end{definition}

Note that $\MPB_I$ is always a regular subforcing of $\MPB$.
\begin{lemma}\label{lemma: not adding new branches by baumgartner forcing mod subiteration}
Let $\mathbb{P}_\delta(\mathcal{T})$ be an iteration of Baumgartner's forcing as above, and let $I \subseteq \delta$ be a set of indices such that $\MPB_I$ is a subiteration of $\MPB_{\delta}$. Let $S$ be a tree of height $\omega_1$ in the generic extension by $\MPB_I$. Then the quotient forcing $\MPB_{\delta} / \MPB_I$ does not add a new branch to $S$.
\end{lemma}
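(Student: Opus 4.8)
The plan is to deduce this from Lemma~\ref{lemma: not adding new branches by baumgartner forcing}, applied inside $V[G_I]$, where $G_I$ is $\MPB_I$-generic and $S$ lives. The key observation is that, in $V[G_I]$, the quotient $\MPB_{\delta}/\MPB_I$ is again an iteration of Baumgartner's forcing of the form treated by that lemma; once this is shown, the lemma (which holds over any model of $\ZFC$, in particular over $V[G_I]$) immediately gives that $\MPB_{\delta}/\MPB_I$ adds no new branch to $S$.

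First I would recall that, as noted above, $\MPB_I$ is a regular subforcing of $\MPB_{\delta}$, with projection given by the restriction map $p \mapsto p\restriction I$. Because $\MPB_I$ is moreover a \emph{subiteration} --- so that none of the coordinates indexed by $I$ was replaced by the trivial forcing --- the standard analysis of finite support iterations restricted to an index set shows that, in $V[G_I]$, the quotient $\MPB_{\delta}/\MPB_I$ is the finite support iteration of the coordinates $\langle \dot{\MQB}_\alpha(\mathcal{T}) \mid \alpha \in \delta\setminus I\rangle$, each reinterpreted over the appropriate intermediate model. Indeed, at the stage of the quotient corresponding to $\alpha \in \delta\setminus I$, the generic accumulated so far together with $G_I$ yields a $\mathbb{P}_{\alpha}(\mathcal{T})$-generic, over which $\dot{\MQB}_\alpha(\mathcal{T}) = \mathbb{B}(\mathcal{T}(\mathbb{P}_\alpha(\mathcal{T})))$ is exactly Baumgartner's forcing specializing the $\aleph_1$-Aronszajn tree $\mathcal{T}(\mathbb{P}_\alpha(\mathcal{T}))$. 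Hence $\MPB_{\delta}/\MPB_I$ is, in $V[G_I]$, a finite support iteration of Baumgartner forcings for $\aleph_1$-Aronszajn trees.

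It remains to exhibit this iteration in the exact form $\mathbb{P}_{\delta'}(\mathcal{T}'')$ required by Lemma~\ref{lemma: not adding new branches by baumgartner forcing}, where $\delta' = \otp(\delta\setminus I)$. Let $\langle \alpha_\xi \mid \xi < \delta'\rangle$ enumerate $\delta\setminus I$ increasingly. Working in $V[G_I]$, I would define $\mathcal{T}''$ by recursion: assuming $\mathcal{T}''$ has been defined so that $\mathbb{P}_\xi(\mathcal{T}'') \cong \MPB_{\alpha_\xi}/\MPB_I$, the $\mathbb{P}_{\alpha_\xi}(\mathcal{T})$-name $\mathcal{T}(\mathbb{P}_{\alpha_\xi}(\mathcal{T}))$ --- using this isomorphism together with $\MPB_I \ast (\MPB_{\alpha_\xi}/\MPB_I) \cong \MPB_{\alpha_\xi}$ --- reinterprets as a $\mathbb{P}_\xi(\mathcal{T}'')$-name for an $\aleph_1$-Aronszajn tree, which I set to be $\mathcal{T}''(\mathbb{P}_\xi(\mathcal{T}''))$; on every ccc forcing not of the form $\mathbb{P}_\xi(\mathcal{T}'')$ I let $\mathcal{T}''$ return a fixed name for an $\aleph_1$-Aronszajn tree, so that $\mathcal{T}''$ is a legitimate input to the construction.

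The step requiring the most care --- and the main obstacle --- is the simultaneous induction on $\xi \le \delta'$ verifying that $\mathbb{P}_\xi(\mathcal{T}'') \cong \MPB_{\alpha_\xi}/\MPB_I$ (with the convention $\alpha_{\delta'} = \delta$), which both justifies the recursion defining $\mathcal{T}''$ and delivers the desired isomorphism $\mathbb{P}_{\delta'}(\mathcal{T}'') \cong \MPB_{\delta}/\MPB_I$ at the top. This is the familiar but bookkeeping-heavy rearrangement argument for finite support iterations; it is precisely here that the subiteration hypothesis is essential, since it guarantees that the coordinates indexed by $I$ account for exactly the $\MPB_I$-generic and that the omitted coordinates are exactly those indexed by $\delta\setminus I$. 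Granting the isomorphism, Lemma~\ref{lemma: not adding new branches by baumgartner forcing} applied in $V[G_I]$ to $\mathbb{P}_{\delta'}(\mathcal{T}'')$ and the tree $S$ shows that no new branch through $S$ is added, completing the proof.
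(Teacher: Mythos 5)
Your overall strategy---exhibit $\MPB_{\delta}/\MPB_I$ over $V[G_I]$ as an iteration $\mathbb{P}_{\delta'}(\mathcal{T}'')$ of Baumgartner forcings and then invoke Lemma~\ref{lemma: not adding new branches by baumgartner forcing} inside $V[G_I]$---is exactly the paper's strategy, but you spend all your effort on the rearrangement bookkeeping and skip the one step that carries real mathematical content. When you declare that $\mathcal{T}(\mathbb{P}_{\alpha_\xi}(\mathcal{T}))$ ``reinterprets as a $\mathbb{P}_\xi(\mathcal{T}'')$-name for an $\aleph_1$-Aronszajn tree,'' you are asserting that the tree chosen at stage $\alpha_\xi$ is still Aronszajn in the model $V[G_I][H_\xi]$, where $H_\xi$ is the quotient generic accumulated below stage $\xi$. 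That model is strictly larger than $V[G_{\mathbb{P}_{\alpha_\xi}(\mathcal{T})}]$, the only model over which the name was originally guaranteed to denote an Aronszajn tree: it also contains the generics for the coordinates of $I$ lying above $\alpha_\xi$. If the tree had acquired a cofinal branch in $V[G_I][H_\xi]$, then Baumgartner's forcing for it would fail to be c.c.c.\ there, $\mathcal{T}''$ would not be a legitimate input to Lemma~\ref{lemma: not adding new branches by baumgartner forcing}, and your final appeal to that lemma would have nothing to stand on. So this is a claim requiring proof, not a consequence of the isomorphism $\mathbb{P}_\xi(\mathcal{T}'')\cong\MPB_{\alpha_\xi}/\MPB_I$.

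The missing verification is precisely what the paper's (terse) proof isolates: one must check that passing from $V[G_{\mathbb{P}_{\alpha_\xi}(\mathcal{T})}]$ to $V[G_I][H_\xi]$ adds no cofinal branch to an $\aleph_1$-Aronszajn tree. This does follow from Lemma~\ref{lemma: not adding new branches by baumgartner forcing}: $V[G_I][H_\xi]$ is an intermediate model of the full extension $V[G_{\mathbb{P}_{\delta}(\mathcal{T})}]$, and the tail $\mathbb{P}_{\delta}(\mathcal{T})/\mathbb{P}_{\alpha_\xi}(\mathcal{T})$ is itself a finite support iteration of Baumgartner forcings over $V[G_{\mathbb{P}_{\alpha_\xi}(\mathcal{T})}]$, so by that lemma no new branch through the tree appears anywhere in $V[G_{\mathbb{P}_{\delta}(\mathcal{T})}]$, hence none in $V[G_I][H_\xi]$. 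Once this is inserted, the recursion defining $\mathcal{T}''$ is justified at every stage and the rest of your argument goes through; but as written, the lemma you are trying to reduce everything to is needed already to legitimize the reduction itself, and that use is absent from your proof.
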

\begin{proof}
We would like to claim that the quotient forcing $\MPB_{\delta} / \MPB_I$ is equivalent to a finite support iteration of Baumgartner's forcing in the generic extension by $\MPB_I$. 

First, let us observe that generally, if $\mathbb{P} \cong \mathbb{Q} \ast \dot{\mathbb{R}}$ has the property that $\mathbb{P} \times \mathbb{P}$ is c.c.c. then so do $\mathbb{Q}$ and $\dot{\mathbb{R}}$ (in the generic extension by $\mathbb{Q}$). Indeed, if $\{\langle q_i, q'_i\rangle \mid i < i_\star\}$ is an antichain in $\mathbb{Q} \times \mathbb{Q}$ then $\{\langle (q_i, 1), (q'_i, 1)\rangle \mid i < i_\star\}$ is an antichain in $\mathbb{P}\times \mathbb{P}$. Similarly, if $\{\langle \dot{r}_i, \dot{r}'_i\rangle \mid i < i_\star\}$ is forced by a condition $q\in \mathbb{Q}$ to be an antichain in $\dot{\mathbb{R}} \times \dot{\mathbb{R}}$ then $\{\langle (q, \dot{r}_i), (q, \dot{r}'_i) \rangle \mid i < i_\star\}$ is an antichain in $\mathbb{P} \times \mathbb{P}$. 

Let us apply those observations for $\MPB_I$. Let $J = \delta \setminus I$ and let us define inductively a function $\mathcal{T}'$ such that in the generic extension by $\mathbb{P}_I$, $\MPB_\delta / \MPB_I \cong \MPB_{\otp J}(\mathcal{T}')$. 

By induction on $\beta \in J$, let us define $\mathcal{T}'(\MPB_{\otp (J \cap \beta)}(\mathcal{T}'))$ to be the $\omega_1$-tree $\mathcal{T}(\MPB_\beta)$, partially evaluated by the generic for $\MPB_{I\cap \beta}$. Let us claim that it is Aronszajn in $\MPB_I$. Indeed, it is an Aronszajn tree in the extension by $\MPB_{I \cap \beta} \ast \MPB_{\otp(J\cap \beta)}(\mathcal{T}')$, which is equivalent to $\MPB_\beta$ (using an inductive assumption). Thus, we have to verify that it remains Aronszajn in the extension by $\MPB_{I \setminus \beta}$. Clearly, this is a sub-iteration of the forcing $\MPB_{\delta} / \MPB_\beta$. Thus, it is a regular subforcing and in particular, since  
$\MPB_{\delta} / \MPB_\beta \times \MPB_{\delta} / \MPB_\beta$ is c.c.c. (as a quotient of productively c.c.c.\ forcing), so is $\MPB_{I \setminus \beta}$. We conclude that it cannot add new branches to a tree of height $\omega_1$, as needed. 
\end{proof}
\subsection{Specializing names for \texorpdfstring{$\aleph_2$}{aleph2}-Aronszajn trees}
\label{Specializing names for trees}
In this subsection, we define a forcing notion for specializing names of $\aleph_2$-Aronszajn trees.
\begin{definition}
Let $V$ be the ground model, $\kappa$ be an inaccessible cardinal in $V$ and suppose that $\MPB \ast \dot{\MQB}$ is a two step iterated forcing which is $\kappa$-c.c.\ and makes $\kappa=\aleph_2$. Let
$\dot{T}$ be a $\MPB \ast \dot{\MQB}$-name for a $\kappa$-Aronszajn tree. We may assume that $\dot{T}$
is forced to be a tree on $\kappa \times \omega_1$ and that the $\alpha$-th level of it is forced to be $\{\alpha\} \times \omega_1$. Let $\mathbb{B}_\MQB(\dot{T})$ be the following forcing notion as it is defined in $V[G_{\MPB}]$:

Conditions in $\mathbb{B}_\MQB(\dot{T})$ are partial functions $f: \kappa \times \omega_1 \to \omega_1$
such that:
\begin{enumerate}
\item $\dom(f) \subseteq \kappa \times \omega_1$ is countable.
\item If $s, t \in \dom(f)$ and $f(s) = f(t)$ then $\Vdash^{V[G_{\MPB}]}_{\MQB}$``$\check{s} \perp_{\dot{T}} \check{t}$''.
\end{enumerate}
The ordering is reverse inclusion.
\end{definition}
\begin{lemma}
\label{properties of generalized specializing forcing}
Work in  $V[G_{\MPB}]$.
\begin{enumerate}
\item [(a)] The forcing notion  $\mathbb{B}_\MQB(\dot{T})$ is $\aleph_1$-closed.

\item [(b)] In the generic extension by $\mathbb{B}_\MQB(\dot{T})$, there is a function $F: \kappa \times \omega_1 \to \omega_1$
which is a specializing function  of every generic interpretation of $\dot{T}$ by a $\MQB$-generic filter over $V[G_{\MPB}]$.
\end{enumerate}
\end{lemma}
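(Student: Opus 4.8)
The plan is to prove the two parts essentially independently, since they concern different aspects of the forcing $\mathbb{B}_\MQB(\dot{T})$. Throughout I work in $V[G_\MPB]$ as instructed, so $\mathbb{B}_\MQB(\dot T)$ is an honest forcing notion whose conditions are countable partial functions from $\kappa \times \omega_1$ to $\omega_1$.

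For part (a), I would show $\aleph_1$-closure directly from the definition. Suppose $\langle f_n \mid n < \omega\rangle$ is a decreasing sequence of conditions. Since the ordering is reverse inclusion, each $f_n$ is extended by $f_{n+1}$, so setting $f = \bigcup_{n<\omega} f_n$ gives a common lower bound provided $f$ is itself a condition. The domain $\dom(f) = \bigcup_n \dom(f_n)$ is a countable union of countable sets, hence countable, verifying clause (1). For clause (2), if $s,t \in \dom(f)$ with $f(s)=f(t)$, then $s,t$ already appear in some common $f_n$ (take the larger of the two indices at which they enter), and since $f \restriction \dom(f_n) = f_n$, we have $f_n(s)=f_n(t)$; as $f_n$ is a condition, $\Vdash^{V[G_\MPB]}_\MQB$``$\check s \perp_{\dot T} \check t$''. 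Thus $f$ is a condition and $f \leq f_n$ for all $n$. This step is routine.

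For part (b), let $H$ be $\mathbb{B}_\MQB(\dot T)$-generic over $V[G_\MPB]$ and set $F = \bigcup H$. First I must check that $F$ is a total function $\kappa \times \omega_1 \to \omega_1$, which follows from a density argument: for each fixed node $(\alpha,\xi)$, the set of conditions whose domain contains $(\alpha,\xi)$ is dense, since any condition $f$ omitting $(\alpha,\xi)$ can be extended by assigning $(\alpha,\xi)$ some colour distinct from all values $f$ already uses (only finitely many such collisions could be forced, and $\omega_1$ supplies a free colour because $\dom(f)$ is countable and at worst we need to avoid the countably many colours on nodes forced comparable to $(\alpha,\xi)$). The genericity of $H$ then makes $F$ total and well-defined since $H$ is a filter. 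It remains to verify the specialization property: I claim that for any $\MQB$-generic filter $g$ over $V[G_\MPB]$ giving interpretation $T = \dot T^g$, whenever $s <_T t$ we have $F(s) \neq F(t)$. Suppose toward a contradiction that $F(s) = F(t)$; then $s,t$ lie in a single condition $f \in H$ with $f(s)=f(t)$, so by clause (2) of the definition, $\Vdash^{V[G_\MPB]}_\MQB$``$\check s \perp_{\dot T} \check t$''. But then in every $\MQB$-generic extension $s$ and $t$ are incomparable in $T$, contradicting $s <_T t$.

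The step I expect to carry the conceptual weight is recognizing that clause (2) quantifies over the forcing $\MQB$ rather than over a single realized tree: the condition $f(s)=f(t)$ is only permitted when $s$ and $t$ are \emph{forced} to be $\perp_{\dot T}$-incomparable, so a single $F$ simultaneously specializes \emph{all} generic interpretations of $\dot T$. This is precisely the feature that distinguishes specializing \emph{names} from specializing a fixed tree, and it is what makes clause (b) meaningful; the argument above is correct precisely because it only uses the forced incomparability, never passing through a particular $g$. No large-cardinal or chain-condition hypotheses are needed here, only the structure of the definition. I would therefore present (a) and the density-plus-contradiction core of (b) concisely, emphasizing that the uniform quantification over $\MQB$ in clause (2) is exactly what yields a single function $F$ working for every $g$.
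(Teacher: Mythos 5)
Your proof is correct and is precisely the routine argument the paper leaves implicit (the lemma is stated without proof, just as the analogous Lemma~\ref{basic properties of B(T)}(b) is dismissed as following ``by simple density arguments''): the union of a countable decreasing chain is a condition since clause (2) is checked pairwise inside a single member of the chain, density of total domains is witnessed by a fresh colour outside the countable range of a given condition, and clause (2) transfers the \emph{forced} incomparability to every generic interpretation of $\dot{T}$ at once, which is exactly the point you correctly isolate. The only cosmetic adjustment is to run the closure argument for decreasing sequences of arbitrary countable length rather than just length $\omega$ (the same union argument applies verbatim), and to note that the parenthetical about ``finitely many forced collisions'' in your density argument is unnecessary: choosing a colour not in the range of $f$ already makes clause (2) vacuous for the new node.
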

In general, $\mathbb{B}_\MQB(\dot{T})$  may fail to satisfy the $\kappa$-c.c.
However as we will see in the proof of Theorem \ref{main theorem2}, under some suitable assumptions, $\mathbb{B}_\MQB(\dot{T})$ will satisfy the $\kappa$-c.c., which is the crucial part of the argument.

\subsection{Definition of the main forcing}
\label{Definition of the main forcing}
In this subsection, we define our main forcing notion, which will be used in the proof of Theorem \ref{main theorem2}.
Assume that $\GCH$ holds and let $\kappa$ be a weakly compact cardinal. Let also $\delta > \kappa$ be a regular cardinal and fix a function $\Phi: \delta \rightarrow H(\delta)$ such that for each $x \in H(\delta), \Phi^{-1}(x)$ is unbounded in $\delta$.
\begin{remark}
For the proof of  Theorem \ref{main theorem2}, it suffices to take $\delta=\kappa^+$, but
we present a more general result that will be used for the proof of
Theorems \ref{main theorem3} and \ref{main theorem}
\end{remark}

We define by induction on $\alpha \leq \delta$  two iterations of forcing notions
\[
\MPB^1_{\delta}=\langle \langle \MPB^1_\alpha \mid \alpha \leq \delta  \rangle,  \langle \dot{\MQB}^1_\alpha \mid \alpha < \delta   \rangle \rangle
\]
and
\[
\MPB^2_{\delta}=\langle \langle \MPB^2_\alpha \mid \alpha \leq \delta  \rangle,  \langle \dot{\MQB}^2_\alpha \mid \alpha < \delta   \rangle \rangle.
\]

The superscript indicates on which cardinal trees are specialized: $\MPB^1_\delta$ is responsible for specialization of $\aleph_1$-Aronszajn trees while $\MPB^2_\delta$ will provide specialization functions for $\aleph_2$-Aronszajn trees. 
Suppose that $\alpha \leq \delta$ and we have defined the forcing notions $\MPB^1_\beta$
and $\MPB^2_\beta$ for all $\beta < \alpha$.
Let us define $\MPB^1_\alpha$ and $\MPB^2_\alpha$.

\subsubsection*{Definition of \texorpdfstring{$\MPB^2_\alpha$}{P2}} The forcing notion $\MPB^2_\alpha$ is defined in $V$ as follows.

Set $\MQB^2_0=\Col(\aleph_1, < \kappa)$.

If $\alpha$ is a limit ordinal and $\cf(\alpha) > \omega$, let $\MPB^2_\alpha$ be the direct limit of the forcing notions
$\MPB^2_\beta$, $\beta < \alpha$. If $\alpha$ is a limit ordinal and $\cf(\alpha) = \omega$, let $\MPB^2_\alpha$ be the inverse limit of the forcing notions
$\MPB^2_\beta$, $\beta < \alpha$.

Now suppose that $\alpha=\beta+1$ is a successor ordinal. If $\Phi(\beta)$ is a $\MPB^2_\beta \ast \dot{\MPB}^1_\beta$-name for a $\kappa$-Aronszajn tree,
then let $\MQB^2_\beta$ be a $\MPB^2_\beta$-name such that
\[
\Vdash_{\MPB^2_\beta}\text{``}\dot{\MQB}^2_\beta = \mathbb{B}_{\MPB^1_\beta}(\Phi(\beta))\text{''}.
\]
Otherwise, let  $\MQB^2_\beta$ be a name for the trivial forcing notion.

\subsubsection*{Definition of \texorpdfstring{$\MPB^1_\alpha$}{P1}} The forcing notion $\MPB^1_\alpha$ is defined in the generic extension of $V$ by $\MPB^2_\alpha$.
Let $V[G^2_\alpha]$ be the generic extension of $V$ by $\MPB^2_\alpha$ and work in it.

If $\alpha$ is a limit ordinal, then let $\MPB^1_\alpha$ be the direct limit of the forcing notions
$\MPB^1_\beta$, $\beta < \alpha$.

Let $\alpha=\beta+1$ be a successor ordinal. If $\Phi(\beta)$ is a $\MPB^2_\alpha \ast \dot{\MPB}^1_\beta$-name for an $\aleph_1$-Aronszajn tree,
then let $\MQB^1_\beta$ be such that
\[
\Vdash_{\MPB^2_\alpha \ast \dot{\MPB}^1_\beta}\text{``}\dot{\MQB}^1_\beta = \mathbb{B}(\Phi(\beta))\text{''}.
\]
Otherwise, let $\MQB^1_\beta$ be the trivial forcing notion.

\subsubsection*{Definition of the main forcing notion} Finally we define the main forcing notion that will be used in the proof of Theorem \ref{main theorem2}.
For each $\alpha \leq \delta$ set $\MPB_\alpha = \MPB^2_\alpha \ast \dot{\MPB}^1_\alpha$ and let $\MPB = \MPB_{\delta}$.

We will show that in the generic extension by $\MPB$, all Aronszajn trees on $\aleph_1$ and $\aleph_2$ are special, and there is an $\aleph_2$-Aronszajn tree.

It is important to note that although $\mathbb{P}^2_{\alpha}$ and $\mathbb{P}^1_{\alpha}$ are defined recursively together, $\mathbb{P}^2_{\alpha}$ does not depend on the generic filter of $\mathbb{P}^1_{\alpha}$ and  specializes any possible $\mathbb{P}^1_{\alpha}$-name for an $\aleph_2$-Aronszajn tree, regardless of whether this tree happened to be special or non-special in the generic extension by $\mathbb{P}^1_{\alpha}$ (see Lemma \ref{properties of generalized specializing forcing}(b)). 
\subsection{Properties of the forcing notion \texorpdfstring{$\MPB$}{P}}
\label{Properties of the forcing notion P}
In this subsection we state and prove some basic properties of the forcing notions defined above.
\begin{lemma}
\label{closure of P}
For every $\alpha \leq \delta$, the  forcing notion $\MPB^2_{\a}$ is $\aleph_1$-closed.
\end{lemma}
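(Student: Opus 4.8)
The plan is to argue by induction on $\alpha \leq \delta$, relying on two facts: that every iterand of the iteration $\MPB^2_\delta$ is forced to be $\aleph_1$-closed, and that $\aleph_1$-closure propagates through an iteration whose support rule takes direct limits at stages of uncountable cofinality and inverse limits at stages of cofinality $\omega$ — which is exactly the rule prescribed in the definition of $\MPB^2_\delta$. For the first fact, I observe that the initial iterand $\MQB^2_0 = \Col(\aleph_1, <\kappa)$ is $\aleph_1$-closed because its conditions are countable partial functions, that each nontrivial iterand $\dot{\MQB}^2_\beta$ is forced to equal $\mathbb{B}_{\MPB^1_\beta}(\Phi(\beta))$, which is $\aleph_1$-closed by Lemma \ref{properties of generalized specializing forcing}(a), and that the trivial forcing is $\aleph_1$-closed vacuously. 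Throughout I treat decreasing sequences of arbitrary countable length, the argument being uniform in the length.

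The base case $\alpha = 0$ is trivial. For a successor $\alpha = \beta + 1$, the induction hypothesis gives that $\MPB^2_\beta$ is $\aleph_1$-closed, and since $\dot{\MQB}^2_\beta$ is forced to be $\aleph_1$-closed, the two-step iteration $\MPB^2_{\beta+1} = \MPB^2_\beta \ast \dot{\MQB}^2_\beta$ is $\aleph_1$-closed: a decreasing sequence is bounded by first choosing a lower bound in $\MPB^2_\beta$ and then, below it, a name for a lower bound of the second coordinates. At a limit $\alpha$ with $\cf(\alpha) > \omega$, where $\MPB^2_\alpha$ is the direct limit, any countable decreasing sequence $\langle p_\nu \rangle$ consists of conditions each lying in some $\MPB^2_{\gamma_\nu}$ with $\gamma_\nu < \alpha$; as the sequence is countable and $\cf(\alpha) > \omega$, the supremum $\gamma = \sup_\nu \gamma_\nu$ is below $\alpha$, so all $p_\nu$ lie in $\MPB^2_\gamma$, which is $\aleph_1$-closed by induction, and a lower bound there is a lower bound in $\MPB^2_\alpha$.

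The delicate case, and the main obstacle, is a limit $\alpha$ with $\cf(\alpha) = \omega$, where $\MPB^2_\alpha$ is the inverse limit and a lower bound may require support cofinal in $\alpha$. Given a countable decreasing sequence $\langle p_\nu \rangle$, I construct a lower bound $q$ by recursion on the coordinate $\xi < \alpha$: having arranged $q \restriction \xi \leq p_\nu \restriction \xi$ for all $\nu$, the sequence $\langle p_\nu(\xi) \rangle$ is forced by $q \restriction \xi$ to be decreasing in the $\aleph_1$-closed forcing $\dot{\MQB}^2_\xi$, so I let $q(\xi)$ be a $\MPB^2_\xi$-name forced to be a lower bound (trivial where all the $p_\nu(\xi)$ are, so that $\supp(q) \subseteq \bigcup_\nu \supp(p_\nu)$). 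The point requiring care is that at every inner limit coordinate $\eta \leq \alpha$ the partial object $q \restriction \eta$ must be a genuine condition of $\MPB^2_\eta$: when $\cf(\eta) = \omega$ the inverse limit permits cofinal support, while when $\cf(\eta) > \omega$ one must check that $\supp(q) \cap \eta$ is bounded below $\eta$ — and indeed each $\supp(p_\nu) \cap \eta$ is bounded below $\eta$ since $\MPB^2_\eta$ is a direct limit, and a countable supremum of ordinals below $\eta$ stays below $\eta$. This is precisely the interplay for which the definition of $\MPB^2_\delta$ places inverse limits exactly at the cofinality-$\omega$ stages, and it is what makes the recursion go through, yielding $q \in \MPB^2_\alpha$ with $q \leq p_\nu$ for all $\nu$.
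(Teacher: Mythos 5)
Your proof is correct and takes essentially the same approach as the paper: the paper's proof is the one-line observation that $\MPB^2_{\a}$ is a countable support iteration of $\aleph_1$-closed forcing notions (the collapse at stage $0$ and the specializing forcings, via Lemma \ref{properties of generalized specializing forcing}(a)) and hence $\aleph_1$-closed. What you have written is simply the detailed verification of that standard fact, including the correct handling of supports at limit stages of uncountable versus countable cofinality.
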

\begin{proof}
$\MPB^2_{\a}$ is a countable support iteration of $\aleph_1$-closed forcing notions, and hence is $\aleph_1$-closed.
\end{proof}
Then next lemma resembles Lemma \ref{basic properties of B(T)}.
\begin{lemma}
\label{chain condition of P-1}
For every $\alpha \leq \delta$, $\Vdash_{\MPB^2_\alpha}$`` $\dot{\MPB}^1_\alpha$ is c.c.c.''. Moreover, 
for every $\alpha \leq \delta$, $\Vdash_{\MPB^2_{\delta}}$`` $\dot{\MPB}^1_\alpha$ is c.c.c.''.
\end{lemma}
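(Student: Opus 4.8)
The plan is to prove (a) directly, and then to derive (b) from (a) together with the closure of the tail of $\MPB^2_\delta$. The point is that, once a generic for $\MPB^2_\alpha$ is fixed, $\dot{\MPB}^1_\alpha$ is a finite support iteration of Baumgartner forcings, so it is c.c.c.\ in $V[G^2_\alpha]$; the only issue is that in (b) we must read off its c.c.c.\ in the larger model $V[G^2_\delta]$, and for this I would show that the intervening $\aleph_1$-closed forcing cannot destroy c.c.c.

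For (a) I would simply work in $V[G^2_\alpha]$ and observe that, by construction, $\dot{\MPB}^1_\alpha$ is interpreted as a finite support iteration whose $\beta$-th iterand is either trivial or Baumgartner's forcing $\mathbb{B}(T_\beta)$ for an $\aleph_1$-Aronszajn tree $T_\beta$ of the relevant intermediate model. Each such iterand is c.c.c.\ by Lemma \ref{basic properties of B(T)}(a), so by the Solovay--Tennenbaum theorem the whole finite support iteration is c.c.c. This is exactly the remark, made after the definition of $\mathbb{P}_\gamma(\mathcal{T})$, that every such iteration is c.c.c.

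For (b) I would fix $\alpha \leq \delta$, work in $V[G^2_\alpha]$, and write $\MPB^2_\delta \cong \MPB^2_\alpha \ast \dot{\mathbb{Q}}$, where $\dot{\mathbb{Q}}$ is the tail of the iteration from stage $\alpha$ to $\delta$. Since $\MPB^2_\delta$ is a countable support iteration of $\aleph_1$-closed forcings, the tail $\mathbb{Q}$ is again a countable support iteration of $\aleph_1$-closed forcings, hence $\aleph_1$-closed, exactly as in Lemma \ref{closure of P}. By (a), $\mathbb{R} := \MPB^1_\alpha$ is c.c.c.\ in $V[G^2_\alpha]$, so it suffices to prove the general preservation statement that an $\aleph_1$-closed forcing $\mathbb{Q}$ cannot destroy the c.c.c.\ of a ground model forcing $\mathbb{R}$. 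For this I would argue by contradiction: suppose some $q^\ast \in \mathbb{Q}$ forces $\dot{A} = \{\dot{a}_i \mid i < \omega_1\}$ to be an uncountable antichain of $\mathbb{R}$. Working below $q^\ast$, I would build by recursion on $\xi < \omega_1$ a $\leq_{\mathbb{Q}}$-decreasing sequence $\langle q_\xi \mid \xi < \omega_1\rangle$ and conditions $b_\xi \in \mathbb{R}$ with $q_\xi \Vdash_{\mathbb{Q}}$``$b_\xi \in \dot{A}$'' and $b_\xi \notin \{b_\eta \mid \eta < \xi\}$. At stage $\xi$ I would first use $\aleph_1$-closure of $\mathbb{Q}$ to pick a lower bound $q_{<\xi}$ of $\langle q_\eta \mid \eta < \xi\rangle$ (possible since $\xi$ is countable); as $q_{<\xi}$ forces $\dot{A}$ uncountable while the $b_\eta$ are countably many, I would extend $q_{<\xi}$ to $q_\xi$ deciding some $\dot{a}_j$ to equal a value $b_\xi \in \mathbb{R}$ avoiding the previously chosen conditions. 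Since $q_\xi$ forces $b_\xi$ and each $b_\eta$ to be distinct members of the antichain $\dot{A}$, it forces $b_\xi \perp_{\mathbb{R}} b_\eta$, and as these are ground model conditions and incompatibility is downward absolute, $b_\xi \perp_{\mathbb{R}} b_\eta$ already holds in $V[G^2_\alpha]$. Thus $\{b_\xi \mid \xi < \omega_1\}$ would be an uncountable antichain of $\mathbb{R}$ in $V[G^2_\alpha]$, contradicting (a).

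The only point that genuinely needs care is this last extraction: that $\aleph_1$-closure lets me thread the recursion through all countable stages while keeping the $b_\xi$ pairwise incompatible. Everything else — the decomposition $\MPB^2_\delta \cong \MPB^2_\alpha \ast \dot{\mathbb{Q}}$ and the $\aleph_1$-closure of the tail — is routine and parallels Lemma \ref{closure of P}. I therefore expect the main, though ultimately standard, content of the proof to be the preservation of c.c.c.\ under $\aleph_1$-closed forcing.
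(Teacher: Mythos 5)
Your proof of part (a) has a genuine gap, and it sits exactly where the difficulty of the lemma lies. The construction only guarantees that $\Phi(\gamma)$ is an Aronszajn tree in $V[G^2_{\gamma+1}][G^1_\gamma]$, i.e.\ in the model where it was selected. For $\gamma+1<\alpha$, the ``relevant intermediate model'' of the iteration $\MPB^1_\alpha$ as computed in $V[G^2_\alpha]$ is $V[G^2_\alpha][G^1_\gamma]$, and if $\Phi(\gamma)$ acquired a cofinal branch there, then $\mathbb{B}(\Phi(\gamma))$ would have an uncountable antichain (the restrictions of a single colour to an uncountable chain are pairwise incompatible), so Solovay--Tennenbaum would not apply. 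Showing that the trees survive the passage from $\MPB^2_{\gamma+1}$ to $\MPB^2_\alpha$ is the whole content of the lemma, and it is not covered by ``closed forcing adds no branches'': the quotient $\MPB^2_\alpha/\MPB^2_{\gamma+1}$ is $\aleph_1$-closed over $V[G^2_{\gamma+1}]$ but not over $V[G^2_{\gamma+1}][G^1_\gamma]$, since $\MPB^1_\gamma$ adds reals. The paper handles this mixed situation with \cite[Lemma 3.2]{Unger2012}. Relatedly, your plan ``prove (a) for all $\alpha$, then derive (b)'' inverts the actual dependency: establishing (a) at $\alpha$ already requires a (b)-type statement at earlier stages, which is why the paper proves the single statement ``$\MPB^1_\alpha$ is c.c.c.\ in $V[G^2_\gamma]$ for all $\gamma\in[\alpha,\delta]$'' by one induction on $\alpha$.

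Your argument for (b) given (a), on the other hand, is correct, and it is a genuinely different route from the paper's: rather than preserving Aronszajn-ness of the individual trees, you preserve the c.c.c.\ of the poset $\MPB^1_\alpha$ directly, by pulling an alleged uncountable antichain back to $V[G^2_\alpha]$ along an $\omega_1$-length decreasing sequence of conditions of the $\aleph_1$-closed quotient; since incompatibility of conditions of a ground-model poset is absolute, this works. In fact, this observation could be used to repair the whole proof and bypass Unger's lemma: prove by induction on $\alpha$ that $\MPB^1_\alpha$ is c.c.c.\ in $V[G^2_\gamma]$ for every $\gamma\in[\alpha,\delta]$. At a successor $\alpha=\beta+1$ the top iterand is unproblematic, because $\Phi(\beta)$ is by definition a $\MPB^2_{\beta+1}\ast\dot{\MPB}^1_\beta$-name for an Aronszajn tree and $\MPB^2_{\beta+1}=\MPB^2_\alpha$; the initial segment $\MPB^1_\beta$ is c.c.c.\ in $V[G^2_\alpha]$ by the induction hypothesis at $\beta$ with $\gamma=\alpha$; and your closure argument then lifts the c.c.c.\ of $\MPB^1_\alpha$ from $V[G^2_\alpha]$ to every $V[G^2_\gamma]$ with $\gamma>\alpha$. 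As written, however, your (a) is asserted rather than proved.
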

\begin{proof}
Let us show, by induction on $\alpha \leq \delta$, that $\MPB^1_\alpha$ is c.c.c.\ in the generic extension by $\MPB^2_\gamma$, for all $\gamma \in [\alpha, \delta]$.

For a limit ordinal $\alpha$, $\MPB^1_\alpha$ is the direct limit of the forcing notions $\MPB_{\beta}^1$, $\beta < \alpha$, and thus it is c.c.c.

Let $\alpha = \beta + 1$ be a successor ordinal.

Then either $\MPB^1_\alpha = \MPB^1_{\beta}$ and there is nothing to prove, or else, $\MPB^1_\alpha = \MPB^1_{\beta} \ast \mathbb{B}(\dot{T})$ where $\dot{T}=\Phi(\beta)$ is a $\MPB^2_\alpha \ast \MPB^1_\beta$-name for an $\aleph_1$-Aronszajn tree. We need to show that the forcing $\mathbb{B}(\dot{T})$ is c.c.c. in the generic extension by $\MPB^2_{\gamma}$, for $\gamma \in [\alpha, \delta]$.
Since the conditions in Baumgartner's forcing are finite, this forcing is absolute between any model of set theory that contains the evaluation of the name $\dot{T}$. Thus, it is sufficient to show that the tree $T=\dot{T}[G_{\MPB^2_\alpha \ast \MPB^1_\beta}]$, which is Aronszajn in the generic extension by $\MPB^2_\alpha \ast \MPB^1_\beta$, remains Aronszajn in the generic extension by $\MPB^2_{\gamma} \ast \MPB^1_\beta$, for every $\gamma \in [\alpha, \delta]$.

Work in the generic extension by $\MPB^2_{\alpha}$ and let $\gamma \in [\alpha, \delta]$. In this model the tree $T$ is introduced by the forcing $\MPB^1_{\beta}$, which is c.c.c.\ (by the inductive assumption). Let $\mathbb{R}$ be the quotient forcing $\MPB^2_{\gamma} / \MPB^2_\alpha$. This forcing is $\aleph_1$-closed in the generic extension by $\MPB^2_\alpha$, as a countable support iteration of $\aleph_1$-closed forcing notions. By the induction hypothesis, $\MPB^1_\alpha$ is c.c.c.\  in the generic extension by $\MPB^2_\alpha$. Thus, we can apply \cite[Lemma 6]{Unger2012} over the generic extension by $\MPB^2_\alpha$, and conclude that forcing with $\mathbb{R}$ over the larger generic extension by $\MPB^2_\alpha \ast \MPB^1_\alpha$ does not introduce new branches to the $\aleph_1$-tree $T$. The lemma follows.
\end{proof}
The next lemma is the main step towards completing the proof of Theorem \ref{main theorem2}.
\begin{lemma}
\label{chain condition lemma of p-2}
$\MPB^2_{\alpha}$ is $\kappa$-Knaster for each $\alpha \leq \delta$. In particular,  $\MPB^2_{\delta}$ satisfies the $\kappa$-c.c.
\end{lemma}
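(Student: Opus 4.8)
The plan is to prove, by induction on $\alpha \le \delta$, that $\MPB^2_\alpha$ is $\kappa$-Knaster, the Knaster property being chosen because it passes through the iteration more easily than bare $\kappa$-c.c. The engine throughout is a single reflection argument powered by the weak compactness of $\kappa$. Fix a sequence $\langle p_\xi \mid \xi < \kappa\rangle$ of conditions in $\MPB^2_\alpha$ and set
\[
S = \{\zeta < \kappa \mid \forall \xi < \zeta,\ p_\xi \text{ is compatible with } p_\zeta\}.
\]
If $\lvert S\rvert = \kappa$ then $\{p_\zeta \mid \zeta \in S\}$ is pairwise compatible and we are done, so it suffices to make $S$ unbounded. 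Choosing $\theta$ large and an elementary $M \prec H(\theta)$ of size $\kappa$ with ${}^{<\kappa}M \subseteq M$, $\kappa+1 \subseteq M$, and $\MPB^2_\alpha, \Phi, \langle p_\xi\mid \xi<\kappa\rangle \in M$, weak compactness yields a transitive $N$ and an elementary $j\colon M \to N$ with $\crit(j) = \kappa$. Writing $q_\kappa = j(\langle p_\xi\mid \xi<\kappa\rangle)(\kappa)$, elementarity gives $j(S) = \{\zeta < j(\kappa) \mid \forall \xi < \zeta,\ (j\langle p_\xi\rangle)_\xi \text{ compatible with } (j\langle p_\xi\rangle)_\zeta\}$, so the \emph{Key Claim} that $q_\kappa$ is compatible (in $N$, i.e.\ in $j(\MPB^2_\alpha)$) with $j(p_\xi)$ for every $\xi < \kappa$ is exactly the assertion $\kappa \in j(S)$. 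Since $\crit(j) = \kappa$ and $j(S) \cap \kappa = S$, this forces $S$ to be unbounded in $\kappa$, as required.

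To reach the Key Claim I would first normalize the sequence by a double $\Delta$-system. Using that $\MPB^2_\alpha$ is $\aleph_1$-closed (Lemma \ref{closure of P}), so supports are countable, I apply the $\Delta$-system lemma to the supports $\supp(p_\xi) \subseteq \alpha$, obtaining a root set $r$ of coordinates with the non-root parts pairwise disjoint; this reflects under $j$, so the non-root support of $q_\kappa$ is disjoint from each $j(e_\xi)$, and $q_\kappa$ and $j(p_\xi)$ overlap only on $r$. Next, at each common coordinate $\gamma \in r$ that carries a specializing forcing $\mathbb{B}_{\MPB^1_\gamma}(\dot T_\gamma)$ as in Lemma \ref{properties of generalized specializing forcing}, the conditions contribute countable colorings of $\kappa \times \omega_1$; I refine again so that, within each such coordinate, the domains of the colorings form a $\Delta$-system with root $\rho_\gamma$ and the colorings agree on $\rho_\gamma$. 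This second refinement is possible on a set of size $\kappa$ because, under $\GCH$, there are fewer than $\kappa$ possible coloring patterns on countably many countable roots with values in $\omega_1$. After this, $q_\kappa$ and $j(p_\xi)$ agree on all shared data, and their compatibility reduces to checking, coordinate by coordinate, that merging the two colorings creates no \emph{color conflict}: a pair of nodes $s \in \dom(f^\ast)$, $t \in \dom(f_\xi)$ with $f^\ast(s) = f_\xi(t)$ but $\not\Vdash s \perp_{\dot T_\gamma} t$. By the reflected $\Delta$-system, the only dangerous $s$ are the ``new'' nodes of $q_\kappa$, whose first coordinate lies in $[\kappa, j(\kappa))$, so a conflict would pair a node at level $\ge \kappa$ of $j(\dot T_\gamma)$ with a node $t$ at level $< \kappa$ that some condition forces below it.

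The crux, and the step I expect to be the main obstacle, is ruling out this conflict. A node $s$ at level $\ge \kappa$ of $j(\dot T_\gamma)$ comes equipped with its set of $<_{j(\dot T_\gamma)}$-predecessors, which constitutes a name for a cofinal branch through $j(\dot T_\gamma) \restriction \kappa$; since $\crit(j) = \kappa$, this reflects to a name for a cofinal branch of the genuine tree $\dot T_\gamma$. I would then argue, using the $\aleph_1$-closure of $\MPB^2$ together with the branch-preservation results of Lemmas \ref{lemma: not adding new branches by baumgartner forcing} and \ref{lemma: not adding new branches by baumgartner forcing mod subiteration} (applied to the c.c.c.\ component $\MPB^1$ and the relevant quotients), that such a branch would survive into a genuine $\MPB$-generic extension, contradicting the fact that $\dot T_\gamma$ is forced to be $\kappa$-Aronszajn. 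This contradiction shows no conflict arises, establishing the Key Claim. The delicate point here is precisely the transfer of the ``high node'' of $j(\dot T_\gamma)$ to an honest cofinal branch of $\dot T_\gamma$ in a $V$-generic extension, and it is exactly this transfer for which the earlier no-new-branch lemmas and the closure of $\MPB^2_\gamma$ were prepared.

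Finally, the limit cases of the induction should be comparatively routine given this machinery. When $\cf(\alpha) > \omega$ the direct limit forces every condition to have support bounded below $\alpha$, so after a $\Delta$-system reduction the Knaster property descends from a single earlier stage $\MPB^2_\beta$; when $\cf(\alpha) = \omega$ the inverse limit is handled by the same reflection argument applied directly to $\MPB^2_\alpha$, the countable supports and the $\aleph_1$-closure of $\MPB^2$ guaranteeing that $q_\kappa$ and the relevant branch names are correctly computed in the closed model $N$. The concluding sentence, that $\MPB^2_\delta$ is $\kappa$-c.c., is then immediate since $\kappa$-Knaster implies $\kappa$-c.c.
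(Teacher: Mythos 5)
Your overall skeleton (reflect a putative witness to non-Knasterness through a weakly compact embedding $j$, thin by $\Delta$-systems, and show $\kappa\in j(S)$) is a reasonable opening and is close in spirit to how the paper sets things up, but the step you yourself flag as the crux is where the argument actually breaks, and it breaks for a structural reason rather than a technical one. You claim that a color conflict --- a node $s\in\dom(q_\kappa)$ at level $\geq\kappa$ of $j(\dot T_\gamma)$ and a node $t$ at level $<\kappa$ with $f^{*}(s)=f_\xi(t)$ and $\not\Vdash s\perp t$ --- would yield a cofinal branch of $\dot T_\gamma$ in a $V$-generic extension and hence contradict Aronszajn-ness. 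It would not. The set of $<_{j(\dot T_\gamma)}$-predecessors of $s$ is a name for the forcing $j(\MPB^2_\gamma\ast\dot\MPB^1_\gamma)$ of the target model; its trace on the first $\kappa$ levels is a branch of $\dot T_\gamma$ that exists only in the $j(\MPB)$-generic extension, i.e.\ a branch \emph{added by the quotient}, and this is perfectly consistent with $\dot T_\gamma$ being Aronszajn in $V[G]$ (indeed $j(\dot T_\gamma)$ is a $j(\kappa)$-tree, so such nodes $s$ and such branches must exist). Worse, if your contradiction were valid it would show that $s$ is forced incomparable with \emph{every} node at \emph{every} level $<\kappa$, which is absurd since $s$ has a predecessor at each such level in any generic extension. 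So conflicts between the original (even $\Delta$-system-refined) conditions genuinely can occur --- the paper explicitly notes that $\mathbb{B}_{\MQB}(\dot T)$ need not be $\kappa$-c.c.\ in general --- and no amount of thinning the sequence repairs this.

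The missing idea is the paper's central device: one does not compare the given conditions directly, but first \emph{extends} each $p_\lambda$ to a $\lambda$-separating pair $(p'_\lambda,p''_\lambda)$ with $p'_\lambda\restriction\mathcal{M}_\lambda=p''_\lambda\restriction\mathcal{M}_\lambda$, such that for every pair of nodes above $\lambda$ in their domains and every element $\bar p_n$ of a suitable maximal antichain of $\dot\MPB^1_\beta\cap\mathcal{M}_\lambda$, the two sides force the branches below those nodes to pass through \emph{incompatible} nodes $\theta_n\neq\tau_n$ below $\lambda$ (Claim \ref{separation claim}). This is proved by a mutual-genericity argument resting on Claim \ref{claim: countably names for each element} and Lemma \ref{lemma: not adding new branches by baumgartner forcing}: if separation failed, the branch name would be computed already by the small forcing $(\MPB^2_\alpha\ast\MPB^1_\alpha)\cap\mathcal{M}_\lambda$, which is impossible. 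One then stabilizes the witnesses and the restrictions $p'_\lambda\restriction\mathcal{M}_\lambda$ on an $\mathcal{F}$-positive set using normality, and checks that $p'_\lambda\cup p''_{\lambda'}$ is a condition; compatibility of the original $p_\lambda,p_{\lambda'}$ follows because a common extension of the primed conditions extends both. Note also that the maximal antichain $\{\bar p_n\}$ is what lets you upgrade ``some witness forces divergence'' to ``$1_{\MPB^1_\gamma}$ forces $t\perp t'$,'' which is the actual requirement in the definition of the specializing forcing; your sketch never addresses this quantifier over $\MPB^1$-generics. Without the separating-pair machinery the Key Claim, as you state it, is simply false.
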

Before we dive into the details, let us sketch the main ideas of the proof.

The proof consists of two steps. First, we will show that for every $\kappa$-Aronszajn tree $T$, that appears in the iteration, for many $\lambda < \kappa$, the relation between elements above the $\lambda$-th level of $T$ and elements below the $\lambda$-th level of the tree is undetermined by the restriction of the forcing to
some nicely chosen model
$\mathcal{M}_\lambda$ (we will make this statement more precise in the proof ahead). From this, we will conclude that for densely many conditions $p$ and for many $\lambda < \kappa$, there are extensions of $p$ into two stronger conditions $p', p''$, such that the restrictions of $p'$ and $p''$ to $\mathcal{M}_\lambda$ are the same, i.e., $p' \upharpoonright \mathcal{M}_\lambda= p'' \upharpoonright \mathcal{M}_\lambda$, and for every element $t$ in the domain of $p'$ or $p''$ above $\lambda$, $p'$ forces that $\sigma' \leq t$, $p''$ forces that $\sigma'' \leq t$ and $\sigma', \sigma''$ are incompatible. The witnesses $\sigma', \sigma''$, will depend also on $\MPB^1_{\delta}$.  We call $p'$ and $p''$  a separating pair for $p$.

The second step is, given a sequence of $\kappa$ many conditions in $\mathbb{P}^2_{\delta}$, $\langle p_i \mid i < \kappa\rangle$, to extend each $p_i$ to a separating pair $p_i', p_i''$ as above and then, using a $\Delta$-System argument, to fix the incompatibility witnesses in some diagonal way. Then, we will show that every $p_i$ and $p_j$ are compatible and in fact, $p_i' \cup p_j''$ is a condition.

The proof imitates the proof of Laver-Shelah's theorem for specializing all $\aleph_2$-Aronszajn trees \cite{laver-shelah}, but with one additional difficulty - the separating pairs in our construction deal also with the conditions in $\mathbb{P}^1_{\delta}$.

Let us now return to the course of the proof.
\begin{proof}
We prove by induction on $\beta \leq \delta$ that $\MPB^2_\beta$ satisfies the $\kappa$-Knaster property. It is clear that $\MPB^2_1 \simeq \Col(\aleph_1, < \kappa)$
is $\kappa$-Knaster. Now suppose that $\beta \leq \delta$ and each $\MPB^2_\a, \a < \beta$, is $\kappa$-Knaster. We show that
$\MPB^2_\beta$ is also $\kappa$-Knaster.

If $\cf(\beta) > \kappa$, then $\MPB^2_\beta$ is easily seen to be $\kappa$-Knaster, as any subset of $\MPB^2_{\beta}$ of size $\kappa$
is included in some $\MPB^2_\alpha$, for some $\alpha < \beta$, so, by the induction hypothesis, it contains a subset of size $\kappa$ of pairwise compatible elements in  $\MPB^2_{\a}$ and hence
each pair of elements in this subset will be compatible in  $\MPB^2_{\beta}$ as well.

Now suppose that $\cf(\beta) \leq \kappa$. 
Let $\theta > \delta$ be a sufficiently large regular cardinal and let $\mathcal{M} \prec H(\theta)$
be such that
\begin{itemize}
\item $|\mathcal{M}|=\kappa$ and $^{<\kappa}\mathcal{M} \subseteq \mathcal{M}$.
\item $V_{\kappa} \subseteq \mathcal{M}$.
\item $\kappa, \Phi, \delta, \beta, \langle \MPB^1_\alpha\mid \alpha\leq \delta \rangle,  \langle \MPB^2_\alpha\mid \alpha\leq \delta \rangle, \dots \in \mathcal{M}$.
\end{itemize}
Note that $\mathcal{M}$ computes correctly the cofinality of $\beta$, and contains some cofinal sequence, $\langle \beta_i \mid i < \cf(\beta)   \rangle\in\mathcal M$. Since $\cf(\beta) \leq \kappa \subseteq \mathcal{M}$, we have  also that 
$\{\beta_i \mid i < \cf(\beta)\} \subseteq \mathcal M$. 
 
Let $\bar{\mathcal{M}}$ be the transitive collapse of $\mathcal{M}$ with $\pi: \mathcal{M} \to \bar{\mathcal{M}}$
being the transitive collapse map. For each $x \in \mathcal{M}$ we write $x^*$ for $\pi(x)$.
Note that since $\kappa + 1 \subseteq \mathcal{M}$, for $A \subseteq \kappa$, $A \in \bar{\mathcal{M}}$,  if and only if $A \in \mathcal{M}$ and $A^* = A$.

By \cite{hauser}, there exists a transitive model $\mathcal{N}$, closed under $< \kappa$-sequences, and an elementary embedding
$j: \bar{\mathcal{M}} \to \mathcal{N}$ with critical point $\kappa$ such that $j, \bar{\mathcal{M}} \in \mathcal{N}$.
Let
\[
\mathcal{F} = \{ A \subseteq \kappa \mid A \in \mathcal{M} \text{~and~}\kappa \in j(A)\}.
\]
Then $\mathcal{F}$ is an $\mathcal{M}$-normal $\kappa$-complete $\mathcal{M}$-ultrafilter on $\kappa$.
Let also $\mathcal{S}$ be the collection of $\mathcal{F}$-positive sets, i.e.,
\[
\mathcal{S}=\{ D \subseteq \kappa \mid \forall A \in \mathcal{F},~ D \cap A \neq \emptyset \}.
\]
\begin{lemma}
Every member of $\mathcal{F}$ is positive with respect to the weakly compact filter.
\end{lemma}
\begin{proof}
Let $A \in \mathcal{F}$. If $A$ is disjoint from some element in the weakly compact filter, then since $\mathcal{M}\prec H(\theta)$, there is some element in the weakly compact filter $B \in \mathcal{M}$ disjoint from $A$. By the definition of the weakly compact filter, this means that there is some parameter  $R \subseteq V_\kappa$ and a $\Pi^1_1$-formula $\Phi$ such that $\langle V_\kappa, \in, R\rangle \models \Phi$ and $B \supseteq \{\alpha < \kappa \mid \langle V_\alpha, \in, R \cap V_\alpha\rangle \models \Phi\}$. By elementarity we may assume that $R \in \mathcal M$. Note also that the transitive collapse does not modify $B, R$ or $A$.

Let us consider $j(B)$. By the definition of $B$, $\kappa \in j(B)$, since $j(R) \cap V_\kappa = R$ and 
\[V \models \text{``} \langle V_\kappa, \in ,R\rangle \models \Phi \text{''},\] so in particular 
\[\mathcal N \models \text{``} \langle V_\kappa, \in ,R\rangle \models \Phi \text{''}.\]
By $\kappa \in j(A)$ so $j(A) \cap j(B) \neq \emptyset$ and thus $A \cap B \neq \emptyset$.
\end{proof}
Let us note that although $\mathcal{F}$ does not have to be $V$-normal, it is $\mathcal M$ normal. Thus, if $\langle B_\alpha \mid \alpha < \kappa\rangle$ is a $\kappa$-sequence of elements in the model $\mathcal{M}$, such that all of them are in $\mathcal F$, then their diagonal intersection is in $\mathcal F$. Moreover, if $\mathcal{M}$ is a $\kappa$-model and $\mathcal{M} \in \mathcal{M}^*$, a larger $\kappa$-model, and if $j^* \colon \mathcal{M}^* \to \mathcal{N}^*$ is a weakly compact embedding then $j = j^* \restriction \mathcal{M} \colon \mathcal{M} \to j^*(\mathcal{M})$ is also a weakly compact embedding, and if $\mathcal{F}^*$ is the $\mathcal{M}^*$-ultrafilter defined by $j^*$ and $\mathcal{F}$ is the $\mathcal M$-ultrafilter defined by $j$, then $\mathcal{F} = \mathcal{F}^* \cap \mathcal{M}$. In particular, if a sequence of sets in $\mathcal{M}$ are of measure one regardless of the choice of $j$ then we can safely assume that their diagonal intersection is also of measure one.  

Let us define the sequence $\langle \mathcal{M}_\lambda \mid \lambda < \kappa  \rangle$ as follows:
Let $\phi: V_\kappa \leftrightarrow \mathcal{M}$ be a bijection and for each $\lambda < \kappa$ set
$\mathcal{M}_\lambda = \phi[V_\lambda]$. By the above discussion, we can assume that 
\[\{\lambda < \kappa \mid \mathcal{M}_\lambda \cap \kappa = \lambda \} \in \mathcal{F}.\]

Since $\mathcal{M} \prec H(\theta)$, if $\MPB^2_\beta$ is not $\kappa$-Knaster, then
$\mathcal{M} \models$`` $\MPB^2_\beta\text{ is not }\kappa\text{-Knaster}$''. In particular, there is a sequence of conditions $\langle p_\alpha \mid \alpha < \kappa\rangle \in \mathcal{M}$ witnessing it and since $\kappa \subseteq \mathcal{M}$, $p_\alpha \in \mathcal{M}$ for all $\alpha < \kappa$. We conclude that $\MPB^2_\beta \cap \mathcal{M}$ is not $\kappa$-Knaster. Thus, let us concentrate in showing that $\MPB^2_\beta \cap \mathcal{M}$ is $\kappa$-Knaster.

Let us assume that $\lambda < \kappa$ is an inaccessible cardinal, $^{<\lambda}\mathcal{M}_\lambda \subseteq \mathcal{M}_\lambda$ and that $\mathbb{P}^2_\beta \cap \mathcal{M}_\lambda$ is a regular subforcing of $\mathbb{P}^2_\beta \cap \mathcal{M}$ (later in Claim \ref{separation claim}, we will show that such cardinals exist). For  such a cardinal $\lambda$ and $p\in \MPB^2_\beta \cap \mathcal{M}$, we denote by $p \restriction \mathcal{M}_\lambda$, the following condition in $\MPB^2_\beta \cap \mathcal{M}_\lambda$.
Let $p(\alpha)$ be the $\alpha$-th coordinate of $p$ for $\alpha < \beta\rangle$. Then $p \restriction \mathcal{M}_\lambda$ is a function such that $(p\restriction \mathcal{M}_\lambda)(\alpha)$ is the trivial condition if $\alpha \notin \mathcal{M}_\lambda$ and otherwise $(p\restriction \mathcal{M}_\lambda)(\alpha) = p(\alpha) \restriction \mathcal{M}_\lambda$. Namely, $p \restriction \mathcal{M}_\lambda$ is obtained from $p$ by removing all coordinates which do not appear in $\mathcal{M}_\lambda$ and restricting the domain of the specialization functions to values from $\mathcal{M}_\lambda$. By the closure of $\mathcal{M}_\lambda$, $p \restriction \mathcal{M}_\lambda \in \mathcal{M}_\lambda$. Under some closure assumptions on $\lambda$, $p \restriction \mathcal M_\lambda$ is a condition and $p \leq p \restriction \mathcal{M}_\lambda$. 

Let $\alpha < \beta$ and let us assume that $\MPB_\alpha \cap \mathcal{M}_\lambda$ is $\lambda$-c.c.\ and that $\mathcal{M}_\lambda$ is sufficient closed so that $p \restriction \mathcal{M}_\lambda$ is a condition for densely many $p\in \MPB_\alpha \cap \mathcal M$. Let $G \subseteq \mathbb{P}^2_\alpha\cap \mathcal{M}$ be a generic filter. Then in $V[G]$ there is a natural generic filter,
\[G \cap \mathcal{M}_\lambda := \{p\restriction \mathcal{M}_\lambda \mid p \in G\} = \{p\in G \mid p \in \MPB^2_\alpha\cap \mathcal{M}_\lambda \} \subseteq \mathbb{P}^2_\alpha\cap \mathcal{M}_\lambda.\]

The genericity of $G \cap \mathcal{M}_\lambda$ follows from the chain condition of $\MPB_\alpha \cap \mathcal{M}_\lambda$. Indeed, if there is a maximal antichain of condition in $\MPB_\alpha \cap \mathcal{M}_\lambda$ then it is a member of $\mathcal{M}_\lambda$ and therefore maximal in $\mathcal{M}$ as well. In general, those equations might fail and this filter might not be generic as the map $p \mapsto p \restriction \mathcal{M}_\lambda$ is not a projection. 

Let us denote, temporarily, the quotient forcing $\left(\MPB^2_\alpha\cap \mathcal{M}\right) / \left(G \cap \mathcal{M}_\lambda\right)$ by $\mathbb{R}$. It is possible that for a condition $p\in\MPB^2_\alpha \cap \mathcal{M}$, $p \restriction \mathcal{M}_\lambda \not\Vdash \text{``}p \in \mathbb{R}$''. Nevertheless, it is impossible that $p \restriction \mathcal{M}_\lambda \Vdash \text{``}p \notin \mathbb{R}$'', and thus there is an extension $q \leq p \restriction \mathcal{M}_\lambda$ for which $q \Vdash p \in \mathbb{R}$ or equivalently for every $r \leq q$ in $\MPB^2_\alpha \cap \mathcal{M}_\lambda$, $r$ is compatible with $p$. By modifying $p \restriction \mathcal{M}_\lambda$ we can ensure that $p \restriction \mathcal{M}_\lambda \Vdash p \in \mathbb{R}$. 

This situation is denoted by $\ast_\lambda(p, p \restriction \mathcal{M}_\lambda)$ in \cite{laver-shelah}. In this paper, we will say in this case that $p$ is \emph{$\lambda$-compatible}. Similarly to \cite{laver-shelah}, we need to show that the collection of $\lambda$-compatible conditions is large:
\begin{claim}
The set of all $\lambda$-compatible conditions is dense. Moreover, every condition $p$ has a condition $q \leq p$, such that $q$ is $\lambda$-compatible and $\dom p \setminus \mathcal{M}_\lambda = \dom q \setminus \mathcal{M}_\lambda$.
\end{claim}

Unfortunately, a limit of a countable decreasing sequence of $\lambda$-compatible conditions is not necessarily $\lambda$-compatible. 

Before diving into the main technical lemma, let us use the following analysis of names of branches in the trees $\Phi(\alpha)$.
\begin{notation}
For forcing notions $\MPB$ and $\MQB$, we use $\MPB \lessdot \MQB$ to mean that $\MPB$ is a regular sub-forcing of $\MQB$.
\end{notation}

\begin{claim}\label{claim: countably names for each element}
Let $\lambda < \kappa$ be an inaccessible cardinal such that:
\begin{enumerate}
\item $\mathcal{M}_\lambda \cap \kappa = \lambda$.
\item $^{<\lambda}\mathcal{M}_\lambda \subseteq \mathcal{M}_\lambda$.
\item For every $\alpha \in \mathcal{M}_\lambda \cap \beta$, $\MPB^2_\alpha \cap \mathcal{M}_\lambda \lessdot \MPB^2_\alpha \cap \mathcal{M}$ and is $\lambda$-c.c.
\end{enumerate}
Then for every $\alpha \in \mathcal{M}_\lambda \cap \beta$, $\Vdash_{\MPB^2_\alpha} \MPB^1_\alpha \cap \mathcal{M}_\lambda \lessdot \MPB^1_\alpha \cap \mathcal{M}$ and it is equivalent to a sub-iteration.

Moreover, every cofinal branch in $T_\alpha \cap (\lambda\times \omega_1)$ in $\MPB^2_\alpha \ast \MPB^1_\alpha$ exists in $\MPB^2_\alpha \ast (\MPB^1_\alpha \cap \mathcal{M}_\lambda)$.
\end{claim}
Note that in this lemma we consider all branches that were introduced by the full forcing $\MPB^2_\alpha \ast \MPB^1_\alpha$, and not only names with respect to $\left(\MPB^2_\alpha \ast \MPB^1_\alpha\right) \cap \mathcal{M}$. 
\begin{proof}
Let $I = \mathcal{M}_\lambda \cap \beta$. Using the closure of the model $\mathcal{M}_\lambda$, it is easy to verify that for each $\gamma\in I$, the name for the $\gamma$-th Aronszajn tree in the iteration of $\MPB^1_\alpha$ is equivalent to an $\MPB^1_{I \cap \gamma}$-name. Indeed, one can consider the canonical name for the $\gamma$-th Aronszajn tree and using the chain condition of the forcing, conclude that it is contained in $\mathcal{M}_\lambda$.  In particular, it mentions only elements that appear in the the coordinates from the set $I$. As in Lemma \ref{chain condition of P-1}, their Aronszajnity is preserved. 

The quotient $\left(\MPB^2_\alpha \ast \MPB^1_\alpha \right) / (\MPB^2_\alpha \ast (\MPB^1_\alpha \cap \mathcal{M}_\lambda))$ is a finite support iteration of Baumgartner's forcing, and in the generic extension by $\MPB^2_\alpha \ast (\MPB^1_\alpha \cap \mathcal{M}_\lambda)$, $\lambda$ has cofinality $\omega_1$. Thus, by Lemma \ref{lemma: not adding new branches by baumgartner forcing}, no new cofinal branch to $T_\alpha \cap (\lambda\times \omega)$ is added by this forcing.
\end{proof}
Since the forcing $\MPB^1_\alpha / (\MPB^1_\alpha \cap \mathcal{M}_\lambda)$ is c.c.c.\ in the generic extension by $\MPB^2_\alpha \ast (\MPB^1_\alpha \cap \mathcal{M}_\lambda)$, for a given name for a branch $\dot{b}$, one can find in the ground model countably many $\MPB^2_\alpha \ast (\MPB^1_\alpha \cap \mathcal{M}_\lambda)$-names $\{\dot{b}_n \mid n < \omega\}$ for branches,  such that the weakest condition of the quotient forcing, forces that $\dot{b}$ is evaluated as one of them. The same holds, using the same arguments, when replacing $\MPB_\alpha^2$ with $\MPB^2_\alpha\cap \mathcal{M}$.

The main technical tool is the following separation claim.
\begin{claim}
\label{separation claim}
Assume that for all $\alpha \in \mathcal{M} \cap \beta$, $\MPB^2_\alpha$ is $\kappa$-Knaster and $T_\alpha=\Phi(\alpha)$ is a $\MPB^2_\alpha \ast \dot{\MPB}^1_\alpha$-name for a
 $\kappa$-Aronszajn tree. For each $\alpha \in \mathcal{M} \cap \beta$ there exists a measure one set $B_\alpha \in \mathcal{F}$ such that
for every $\lambda \in B_\alpha$:
\begin{enumerate}
\item $\alpha \in \mathcal{M}_\lambda$.
\item $\mathcal{M}_\lambda \cap \kappa = \lambda$ and $\lambda$ is inaccessible.
\item $\mathcal{M}_\lambda$ is closed under $<\lambda$-sequences.
\item $\MPB^2_\alpha \cap \mathcal{M}_\lambda \lessdot \MPB^2_\alpha \cap \mathcal{M}$ and is $\lambda$-c.c.
\item $\MPB^1_\alpha \cap \mathcal{M}_\lambda$ is (equivalent to) an $\MPB^2_\alpha \cap \mathcal{M}_\lambda$-name. 
\item $\left(\MPB^2_\alpha \ast \MPB^1_\alpha \right)\cap \mathcal{M}_\lambda  \lessdot \left(\MPB^2_\alpha \ast \MPB^1_\alpha\right) \cap \mathcal{M}$. Moreover, $\MPB^1_\alpha \cap \mathcal{M}_\lambda$ is a sub-iteration of $\MPB^1_\alpha \cap \mathcal{M}$ in the generic extension by $\MPB^2_\alpha \cap \mathcal{M}$.
\item $\left(\MPB^2_\alpha \ast \MPB^1_\alpha \right)\cap \mathcal{M}_\lambda$ forces that $T_\alpha \cap (\lambda\times\omega_1)$ is an Aronszajn tree.
\end{enumerate}

For every such $\lambda$ we have:
\begin{enumerate}
\item [(8)] $\Vdash_{\MPB_\alpha^2} \MPB^1_\alpha \cap \mathcal{M}_\lambda \lessdot \MPB^1_\alpha$.
\item [(9)] For every pair of $(\MPB_\alpha^2 \cap \mathcal{M}) \ast (\MPB_\alpha^1 \cap \mathcal{M}_\lambda)$-names of cofinal branches $\dot{\tau}, \dot{\theta}$ in the first $\lambda$ levels of $T_\alpha$ and $p \in \MPB^2_\beta \cap \mathcal{M}_\lambda$, and for every $\lambda$-compatible $q', q'' \in \MPB^2_\beta \cap \mathcal{M}$ with $p = q' \upharpoonright  \mathcal{M}_\lambda = q'' \upharpoonright  \mathcal{M}_\lambda$,
there are $\lambda$-compatible conditions $p', p'' \in \MPB^2_\beta \cap \mathcal{M}$, and a countable sequence $\langle (\bar{p}_n, \xi_n, \theta_n, \tau_n) \mid n < \omega\rangle \in \mathcal{M}_\lambda$
such that:
\begin{enumerate}
\item $p' \leq q',\, p'' \leq q''$ and $p' \upharpoonright  \mathcal{M}_\lambda = p'' \upharpoonright  \mathcal{M}_\lambda$.
\item $\forall n < \omega$, $p' \upharpoonright  \mathcal{M}_\lambda  \Vdash_{\MPB^2_\beta \cap  \mathcal{M}_\lambda}$`` $\bar{p}_n \in \dot{\MPB}^1_\beta \cap \mathcal{M}_\lambda$''.
\item $\forall n < \omega$, $\xi_n < \lambda$, $\theta_n, \tau_n \in \{\xi_n\} \times \omega_1$  and $\theta_n \neq \tau_n$.
\item $\forall n < \omega$, $(p' \upharpoonright \alpha, \bar{p}_n \upharpoonright \alpha) \Vdash$``$~\check{\tau_n}  \leq_{T_\alpha} \check{\tau}$'' and $(p'' \upharpoonright \alpha, \bar{p}_n  \upharpoonright \alpha) \Vdash$``$~\check{\theta_n} \leq_{T_\alpha} \check{\theta}$''.
\item $p' \upharpoonright  \mathcal{M}_\lambda  \Vdash_{\MPB^2_\beta \cap  \mathcal{M}_\lambda}$``$\{\bar{p}_n \mid n < \omega\}$ is a maximal antichain in $\dot{\MPB}^1_\beta$''.
\end{enumerate}
\end{enumerate}

Moreover, there is a large set $B$ such that $\lambda \in B$ implies that $\lambda \in B_\alpha$ for all $\alpha \in M_\lambda$. For densely many conditions $p \in \MPB_{\beta} \cap \mathcal{M}$ and $\lambda \in B$, $p \restriction \mathcal{M}_\lambda$ is a condition.
\end{claim}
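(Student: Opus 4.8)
The plan is to establish the two groups of properties by different means: the ``measure one'' list (1)--(8) is obtained by reflecting facts that hold at the top level $\mathcal{M}$, $\kappa$ down to a set in $\mathcal{F}$, while the ``local'' property (9) is the genuinely new ingredient and is proved by a direct separation construction for a fixed $\lambda \in B$. For the first group I would first record the corresponding statements at the level of $\mathcal{M}$ itself. Since $\MPB^2_\alpha$ is $\kappa$-Knaster and $\mathcal{M} \prec H(\theta)$ is closed under $<\kappa$-sequences with $\MPB^2_\alpha \in \mathcal{M}$ and $\kappa \subseteq \mathcal{M}$, every maximal antichain of $\MPB^2_\alpha \cap \mathcal{M}$ lying in $\mathcal{M}$ has size $<\kappa$, hence is contained in $\mathcal{M}$ and, by elementarity, is maximal in $\MPB^2_\alpha$; thus $\MPB^2_\alpha \cap \mathcal{M} \lessdot \MPB^2_\alpha$ and is $\kappa$-c.c. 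The same antichain argument, applied to the c.c.c.\ forcing $\MPB^1_\alpha$ (Lemma \ref{chain condition of P-1}), exhibits $\MPB^1_\alpha \cap \mathcal{M}$ as (equivalent to) a $\MPB^2_\alpha \cap \mathcal{M}$-name and gives $(\MPB^2_\alpha \ast \MPB^1_\alpha) \cap \mathcal{M} \lessdot \MPB^2_\alpha \ast \MPB^1_\alpha$ as a sub-iteration; and since $T_\alpha$ is forced Aronszajn and the whole tree is $T_\alpha \cap (\kappa \times \omega_1)$, the small forcing $(\MPB^2_\alpha \ast \MPB^1_\alpha) \cap \mathcal{M}$ forces it to be Aronszajn. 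Each of these is a property of the sequence $\langle \mathcal{M}_\lambda \mid \lambda<\kappa\rangle$ holding ``at $\kappa$''; applying $j \colon \bar{\mathcal{M}} \to \mathcal{N}$ to the collapsed sequence and reading off the value at $\crit(j) = \kappa$ places the corresponding set of $\lambda < \kappa$ in $\mathcal{F}$. The structural items (1)--(3) are handled the same way: $\{\lambda : \alpha \in \mathcal{M}_\lambda\}$ is a tail, $\{\lambda : \mathcal{M}_\lambda \cap \kappa = \lambda\} \in \mathcal{F}$ is already noted, inaccessibility of $\lambda$ lies in $\mathcal{F}$ because $\kappa$ is inaccessible in $\mathcal{N}$, and the $<\lambda$-closure of $\mathcal{M}_\lambda$ reflects the $<\kappa$-closure of $\mathcal{M}$. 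Taking $B$ to be the intersection of these finitely many members of the $\kappa$-complete filter $\mathcal{F}$ gives (1)--(7), and (8) then follows for $\lambda \in B$ by composing the regular embeddings $\MPB^1_\alpha \cap \mathcal{M}_\lambda \lessdot \MPB^1_\alpha \cap \mathcal{M} \lessdot \MPB^1_\alpha$, the first from (6) and the second from the antichain argument applied once more to the c.c.c.\ forcing $\MPB^1_\alpha$.

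The heart of the claim is (9). I would first reduce the branch names to the small model: by Claim \ref{claim: countably names for each element} and the remark following it, any cofinal branch through the first $\lambda$ levels of $T_\alpha$ is, modulo the c.c.c.\ quotient $\MPB^1_\beta / (\MPB^1_\beta \cap \mathcal{M}_\lambda)$, one of countably many $(\MPB^2_\alpha \cap \mathcal{M}) \ast (\MPB^1_\alpha \cap \mathcal{M}_\lambda)$-names, which is exactly why it suffices to treat $\dot\tau,\dot\theta$ of this form. The construction then builds a maximal antichain $\{\bar p_n \mid n<\omega\}$ of $\dot{\MPB}^1_\beta$ consisting of conditions forced (under $p'\upharpoonright\mathcal{M}_\lambda$) to lie in $\dot{\MPB}^1_\beta \cap \mathcal{M}_\lambda$ --- countable since $\dot{\MPB}^1_\beta$ is c.c.c.\ --- together with the separating data. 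The crucial point is that, by (7), the tree $T_\alpha \cap (\lambda \times \omega_1)$ is Aronszajn in the extension by $(\MPB^2_\alpha \ast \MPB^1_\alpha) \cap \mathcal{M}_\lambda$, so neither branch is added by that forcing; since $\dot\tau,\dot\theta$ depend on the full $\MPB^2_\alpha \cap \mathcal{M}$, their values are driven by coordinates of $q',q''$ lying in $\mathcal{M} \setminus \mathcal{M}_\lambda$, where $q'$ and $q''$ are free to differ. Recursively, working below $p$ and using that the branch is not decided by the $\mathcal{M}_\lambda$-generic, I would find for each $\bar p_n$ a level $\xi_n < \lambda$ and distinct nodes $\tau_n \neq \theta_n$ at level $\xi_n$, together with extensions of $q',q''$ added only in coordinates outside $\mathcal{M}_\lambda$, so that $p'\upharpoonright\mathcal{M}_\lambda = p''\upharpoonright\mathcal{M}_\lambda$ is preserved, with $(p'\upharpoonright\alpha, \bar p_n\upharpoonright\alpha) \Vdash \check\tau_n \leq_{T_\alpha} \check\tau$ and $(p''\upharpoonright\alpha, \bar p_n\upharpoonright\alpha) \Vdash \check\theta_n \leq_{T_\alpha} \check\theta$. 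The closure of $\mathcal{M}_\lambda$ under countable sequences (item (3), as $\lambda$ is inaccessible) guarantees that $\langle (\bar p_n, \xi_n, \theta_n, \tau_n) \mid n<\omega\rangle \in \mathcal{M}_\lambda$, and $\lambda$-compatibility of $p',p''$ is maintained since we only extend above $\lambda$.

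I expect the main obstacle to be exactly the separation in (9): one must simultaneously drive the two branch names apart to distinct nodes $\tau_n \neq \theta_n$ --- possible only because $T_\alpha \cap (\lambda \times \omega_1)$ is Aronszajn, so the branches are not determined by the small forcing --- while keeping the restrictions $p' \upharpoonright \mathcal{M}_\lambda = p'' \upharpoonright \mathcal{M}_\lambda$ equal, preserving $\lambda$-compatibility, and arranging the $\bar p_n$ into a maximal antichain of the full $\dot{\MPB}^1_\beta$ with all the data inside $\mathcal{M}_\lambda$. Reconciling the separation, which necessarily exploits coordinates above $\lambda$, with the demand that the $\mathcal{M}_\lambda$-parts coincide is the delicate balance at the core of the argument, and it is precisely here that the analysis of $\MPB^1$-branch names from Claim \ref{claim: countably names for each element} is indispensable, since it lets the whole separation be carried out against countably many branch names living in the small model.
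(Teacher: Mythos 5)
Your proposal follows essentially the same route as the paper: items (1)--(8) are obtained by reflecting the corresponding facts about $\mathcal{M}$ through the weakly compact embedding $j$ and intersecting finitely many sets in the $\kappa$-complete filter $\mathcal{F}$, while item (9) is proved by a recursive construction that uses the reduction to countably many branch names over $(\MPB^2_\alpha \ast \MPB^1_\alpha)\cap\mathcal{M}_\lambda$, the fact that the branches are new over that model, the c.c.c.\ of $\dot{\MPB}^1_\beta$ to terminate the antichain construction in countably many steps, and the closure of $\mathcal{M}_\lambda$ to keep the witnessing sequence inside it. The one step you leave implicit is the paper's mutual-genericity argument for the density of separating pairs (if no extensions of $q',q''$ with equal $\mathcal{M}_\lambda$-restrictions force incompatible nodes into $\dot\tau$ and $\dot\theta$, then the branch would lie in the intersection of two mutually generic extensions and hence already in the small model), but this is precisely the mechanism behind your phrase ``the branch is not decided by the $\mathcal{M}_\lambda$-generic,'' so the approach is the same.
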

\begin{proof}
First note that the ``moreover'' part is an application of diagonal intersection: by taking a slightly larger model that contains $\mathcal M$, we may assume that the function $\alpha \to B_\alpha$ for $\alpha \in \mathcal M \cap \beta$ is in the model, so it follows from the first part. In order to conclude that the restriction of a condition $p$ to $\mathcal{M}_\lambda$ results in a condition let us note that for every $\alpha \in \mathcal{M}_\lambda \cap \beta$ and every $x, y \in \dot{T}_\alpha \restriction \mathcal{M}_\lambda$, their compatibility is decided by some maximal antichain which belongs (and contained) in $\mathcal{M}_\lambda$. Thus, by extending $p$ at coordinates in $\mathcal{M}_\lambda$ below $\alpha$   we get that $(p \cap \mathcal{M}_\lambda) \restriction \alpha$ already forces the required incompatibility. By repeating this process countably many times, and using the closure of the forcing, we obtained the required condition. Note that this process only modifies $p \restriction \mathcal{M}_\lambda$. 

By the hypotheses of Claim \ref{separation claim}, $\MPB^2_\alpha$ has the $\kappa$-c.c. Let $B_\alpha$ be the set of all inaccessible
cardinals  $\lambda < \kappa$ that satisfy the requirements $(1)$-$(6)$ of the lemma.

Let us verify that $\kappa \in j(B_\alpha)$, and hence
$B_\alpha \in \mathcal{F}$. First, note that since the sequence $\langle \mathcal{M}_\lambda \mid \lambda < \kappa\rangle$ is continuous, $j(\mathcal{M})_\kappa = \bigcup_{\lambda < \kappa} j(\mathcal{M}_\lambda) = j`` \mathcal{M}$.
\begin{enumerate}
\item $j(\alpha) \in j`` \mathcal{M}$, since $\alpha \in \mathcal{M}$ by the assumption of the lemma.
\item $j`` \mathcal{M} \cap j(\kappa) = \kappa$.
\item $j`` \mathcal{M}$ is closed under $<\kappa$-sequences. This is true since $\mathcal{M}$ is closed under $< \kappa$-sequences.
\item $j(\MPB^2_\alpha) \cap j`` \mathcal{M} = j``(\MPB^2_\alpha \cap \mathcal{M})$ and in particular, it is isomorphic to $\MPB^2_\alpha \cap \mathcal{M}$ and is $\kappa$-c.c. From this fact, together with the closure of $j`` \mathcal{M}$ we conclude that it is a regular subforcing of $j(\MPB^2_\alpha \cap \mathcal{M})$.
\item This is the same as in the previous assertion.
\item Using the previous item and the chain condition of the forcing.
\item As in the previous assertion, $j(\MPB^2_\alpha \ast \MPB^1_\alpha) \cap j`` \mathcal{M}$ is isomorphic to $(\MPB^2_\alpha \ast \MPB^1_\alpha) \cap \mathcal{M}$. By the chain condition of the forcing   $\MPB^2_\alpha \ast \MPB^1_\alpha$, $(\MPB^2_\alpha \ast \MPB^1_\alpha) \cap \mathcal{M} \lessdot \MPB^2_\alpha \ast \MPB^1_\alpha$. Thus, we conclude that $j`` T_\alpha$ which is exactly the name of $j(T_\alpha) \cap (\kappa \times \omega_1)$, is a name with respect to the regular subforcing $j``\left( (\MPB^2_\alpha \ast \MPB^1_\alpha) \cap \mathcal{M}\right)$. Clearly, the subforcing forces it to be an Aronszajn tree.
\end{enumerate}

Next, let us show that that the elements of $B_\alpha$ satisfy the clauses $(8)$ and $(9)$ of the lemma. $(8)$ follows from the closure of the model and the chain condition of the forcing.
Suppose that
$\lambda \in B_\alpha$, and  fix names  $\dot\theta$ and $\dot\tau$  for branches, and conditions $p, q'$ and $q''$ as in the statement of the lemma.

It then follows from the choice of $\lambda$ that, for any $(\MPB_\alpha^2 \cap \mathcal{M}) \ast (\MPB_\alpha^1 \cap \mathcal{M}_\lambda)$-generic filter $G$
over $V$, the branches $\dot{\theta}^G, \dot{\tau}^G \notin V[G_{(\MPB^2_\alpha \ast \MPB^1_\alpha) \cap \mathcal{M}_\lambda}]$,
where $G_{(\MPB^2_\alpha \ast \MPB^1_\alpha) \cap \mathcal{M}_\lambda}= G \cap \left((\MPB^2_\alpha \ast \MPB^1_\alpha) \cap \mathcal{M}_\lambda \right)$.

We now claim that below any pair of $\lambda$-compatible conditions $(p', \bar{p}), (p'', \bar{p}) \in \left( \MPB^2_\beta  \cap \mathcal{M} \right)\ast \left(\MPB^1_\beta \cap \mathcal{M}_\lambda\right)$,  $p'\upharpoonright \mathcal{M}_\lambda = p'' \upharpoonright \mathcal{M}_\lambda$, there is a pair of $\lambda$-compatible conditions $(q', \bar q) \leq (p', \bar{p}), (q'', \bar q) \leq (p'', \bar{p})$
such that $(q' \upharpoonright \alpha, \bar q \upharpoonright \alpha) , (q''\upharpoonright \alpha, \bar q \upharpoonright \alpha) \in \MPB^2_\alpha \ast \dot{\MPB}^1_\alpha$ force incompatible values for the branches below $\dot\theta$ and $\dot\tau$ and
 $q'\upharpoonright \mathcal{M}_\lambda = q'' \upharpoonright \mathcal{M}_\lambda$.

If not, we can find conditions $(p', \bar p), (p'', \bar p)$ so that for any extensions $q' \leq p'$
and $q'' \leq p''$ which are $\lambda$-compatible and $q' \upharpoonright \mathcal{M}_\lambda = q'' \upharpoonright \mathcal{M}_\lambda$, and any $\bar q \leq \bar p$, the conditions $(q' \upharpoonright \alpha, \bar q \upharpoonright \alpha), (q'' \upharpoonright \alpha, \bar q \upharpoonright \alpha) \in \MPB^2_\alpha \ast \dot{\MPB}^1_\alpha$ can not force incompatible values for the branches $\dot\theta$ and $\dot\tau$ (using the elementarity of $\mathcal{M}$).

Let $G = G_{(\MPB^2_\alpha \ast \MPB^1_\alpha) \cap \mathcal{M}_\lambda}$ be $V$-generic for ${(\MPB^2_\alpha \ast \MPB^1_\alpha) \cap \mathcal{M}_\lambda}$, and let $H_1, H_2$ be mutually generic filters for the forcing $\MPB^2_\alpha/ (\MPB^2_\alpha \cap \mathcal{M}_\lambda)$ over the model $V[G]$. Let us assume that $(p' \upharpoonright \alpha) \restriction \mathcal{M}_\lambda \in G$, $p'\upharpoonright \alpha \in H_1$ and $p''\upharpoonright \alpha \in H_2$.

By the assumption, $\dot{\tau}^{G \ast H_1} = \dot{\theta}^{G \ast H_2}$. In particular, \[\dot{\tau}^{G \ast H_1} \in V[G][H_1] \cap V[G][H_2],\] and by the mutual genericity of $H_1$ and $H_2$ --- it is in $V[G]$, which is impossible.

Thus we can find a pair of conditions in the iteration 
\[(p'_0, \bar p_0), (p''_0, \bar p_0) \in \left(\MPB^2_\beta \cap \mathcal{M}\right) \ast \left(\dot{\MPB}^1_\beta \cap \mathcal{M}_\lambda\right)\] with $p'_0 \leq p', p''_0 \leq p''$
and  $p'_0 \upharpoonright \mathcal{M}_\lambda = p''_0 \upharpoonright \mathcal{M}_\lambda$ together with $\xi_0 < \lambda$ and  elements in
$\theta_0, \tau_0 \in \{\xi_0\} \times \omega_1$ such that
\begin{itemize}
\item $(p'_0\upharpoonright \alpha, \bar p_0\upharpoonright \alpha) \Vdash$``$\check\theta_0 \in \dot\theta$''.
\item $(p''_0\upharpoonright \alpha, \bar p_0\upharpoonright \alpha) \Vdash$``$\check\tau_0 \in \dot\tau$''.
\end{itemize}
Let us repeat the process. Suppose that $\nu<\omega_1$
and we have defined  the pairs $(p'_\zeta, \bar p_\zeta), (p''_\zeta, \bar p_\zeta) \in \left(\MPB^2_\beta \cap \mathcal{M}\right) \ast \left(\dot{\MPB}^1_\beta \cap \mathcal{M}_\lambda\right)$ together with $\xi_\zeta$ and
$\theta_\zeta, \tau_\zeta \in \lambda \times \omega_1$ such that
\begin{itemize}
\item The sequences $\langle p'_\zeta \mid \zeta < \nu\rangle$ and $\langle p''_\zeta \mid \zeta < \nu \rangle$ are decreasing and for each $\zeta$, $p'_\zeta$ and $p''_\zeta$ are $\lambda$-compatible.
\item $p'_\zeta \upharpoonright \mathcal{M}_\lambda = p''_\zeta \upharpoonright \mathcal{M}_\lambda$.
\item $p'_\zeta \upharpoonright \mathcal{M}_\lambda \Vdash_{\MPB^2_\beta \cap \mathcal{M}_\lambda}$`` $\bar{p}_\zeta \in \dot{\MPB}^1_\beta \cap \mathcal{M}_\lambda$''.
\item For $\zeta'<\zeta < \nu$, $p'_\zeta \upharpoonright \mathcal{M}_\lambda \Vdash_{\MPB^2_\beta \cap \mathcal{M}_\lambda}$``$\bar p_{\zeta'}$ and $\bar p_\zeta$ are incompatible''.
\item $\xi_\zeta < \lambda, \theta_\zeta, \tau_\zeta \in \{\xi_\zeta\} \times \omega_1$ and $\theta_\zeta \neq \tau_\zeta$.
\item $(p'_\zeta \restriction \alpha, \bar p_\zeta \restriction \alpha) \Vdash$``$\check\theta_\zeta \in \dot\theta$''.
\item $(p''_\zeta \restriction \alpha, \bar p_\zeta \restriction \alpha) \Vdash$``$\check\tau_\zeta \in \dot\tau$''.
\end{itemize}
Let $q'_\nu=\bigcup_{\zeta<\nu}p'_\zeta$ and
$q''_\nu=\bigcup_{\zeta<\nu}p''_\zeta$. Then $q'_\nu, q''_\nu \in \MPB^2_\beta \cap \mathcal{M}$,
 $q'_\nu \upharpoonright \mathcal{M}_\lambda = q''_\nu \upharpoonright \mathcal{M}_\lambda$ and they are $\lambda$-compatible. If
\begin{center}
$q'_\nu \upharpoonright \mathcal{M}_\lambda \Vdash$``$\{\bar p_\zeta \mid \zeta < \nu\}$ is a maximal antichain'',
\end{center}
then we stop the construction. Otherwise find a condition $\bar q_\nu$ which is forced to be incompatible with all $\bar p_\zeta$'s, $\zeta<\nu$, and let $(p'_\nu, \bar p_\nu), (p''_\nu, \bar p_\nu)$, $\xi_\nu < \lambda$
and $\theta_\nu, \tau_\nu \in \{\xi_\nu\} \times \omega_1$
be such that
\begin{itemize}
\item $(p'_\nu, \bar p_\nu), (p''_\nu, \bar p_\nu) \in \left(\MPB^2_\beta \cap \mathcal{M}\right) \ast \left(\dot{\MPB}^1_\beta \cap \mathcal{M}_\lambda\right)$.
\item $(p'_\nu, \bar p_\nu) \leq (\bar q'_\nu, \bar q_\nu)$ and $(p''_\nu, \bar p_\nu) \leq (\bar q''_\nu, \bar q_\nu)$.
\item $p'_\nu \upharpoonright \mathcal{M}_\lambda= p''_\nu \upharpoonright \mathcal{M}_\lambda$.
\item $(p'_\nu \restriction \alpha, \bar p_\nu \restriction \alpha) \Vdash$``$\check\theta_\nu \in \dot\theta$''.
\item $(p''_\nu \restriction \alpha, \bar p_\nu \restriction \alpha) \Vdash$``$\check\tau_\nu \in \dot\tau$''.
\end{itemize}
We would like to claim that there is a way to construct this sequence in a way that the process terminates after at most countably many steps. Otherwise, for every countable $\nu$ and a choice for the values of $p'_\eta,p''_\eta,\bar{p}_\eta$ for $\eta < \nu$ there is a choice for $p'_\nu, p''_\nu, \bar{p}_\nu$. Let $H$ be a generic filter for $\MPB^2_\alpha \cap \mathcal{M}_\lambda$, and using the assumed density, find an $\omega_1$-sequence of conditions $p'_\nu, p''_\nu, \bar{p}_\nu$ such that $p'_\nu \restriction \mathcal{M}_\lambda = p''_\nu \restriction \mathcal{M}_\lambda \in H$. Note that in the generic extension by $H$, the sequence $\bar{p}_\nu$, $\nu < \omega_1$ is an antichain in $\MPB_\alpha^1$, which is a contradiction to Lemma \ref{chain condition of P-1}. Thus, the process generically terminates. Using the closure of $\MPB_\alpha^2$, Lemma \ref{closure of P}, we conclude that there is a choice of conditions such that this process terminates after at most countably many steps. At the end of the process, we get a countable ordinal $\vartheta$, sequences $\langle p_\zeta' \mid \zeta < \vartheta \rangle$
and $\langle p_\zeta'' \mid \zeta< \vartheta\rangle$ of conditions in $\MPB_\beta \cap \mathcal{M}$,
and sequences $\{\bar p_\zeta \mid \zeta < \vartheta\}$
and $\langle (\xi_\zeta, \theta_\zeta, \tau_\zeta) \mid \zeta < \vartheta\rangle$
such that
\begin{itemize}
\item The sequences $\langle p_\zeta' \mid \zeta < \vartheta \rangle$
and $\langle p_\zeta'' \mid \zeta < \vartheta \rangle$ are decreasing and $p_\zeta' \upharpoonright \mathcal{M}_\lambda = p_\zeta'' \upharpoonright \mathcal{M}_\lambda$.
Let $p'=\bigcup_{\zeta < \vartheta}p'_\zeta$ and $p''=\bigcup_{\zeta < \vartheta}p''_\zeta$.
\item $p' \upharpoonright \mathcal{M}_\lambda \Vdash_{\MPB^2_\beta\cap \mathcal{M}_\lambda}$``$\{\bar{p}_\zeta \mid \zeta < \omega\}$ is a maximal antichain in $\dot{\MPB}^1_\beta$''.
\item For all $\zeta < \vartheta, ~ \theta_\zeta, \tau_\zeta \in \{\xi_n\} \times \omega_1$
and $\theta_\zeta \neq \tau_\zeta$.

\item For all $\zeta < \vartheta$, $(p'_\zeta \upharpoonright \alpha, \bar{p}_\zeta \upharpoonright \alpha) \Vdash$``$~\check{\tau_\zeta} \in \dot{\tau}$'' and $(p''_\zeta \upharpoonright \alpha, \bar{p}_\zeta  \upharpoonright \alpha) \Vdash$``$~\check{\theta_\zeta} \in \dot{\theta}$''.
\end{itemize}
 Then $p', p''$ together with the sequence
$\langle (\bar p_\zeta, \xi_\zeta, \theta_\zeta, \tau_\zeta) \mid \zeta < \vartheta \rangle$
are as required.
\end{proof}
By reordering, we may squeeze the information to an $\omega$-sequence. Let us call the sequence
$\langle (\bar p_n, \xi_n, \theta_n, \tau_n) \mid n < \omega\rangle$ a \emph{$\lambda$-separating witness} for the branches $\theta, \tau$
relative to $p', p''$.

Let $\lambda$ be as in the claim. Let $p', p'' \in \MPB^2_\beta \cap \mathcal{M}$ be arbitrary $\lambda$-compatible conditions, with $p' \restriction \mathcal{M}_\lambda = p''\restriction \mathcal{M}_\lambda$. For every $\alpha \in \dom (p') \cap \mathcal{M}_\lambda$ and every element $\theta \in \dom(p'(\alpha))$ above $\lambda$, there are at most countably $\left(\MPB^2_\alpha \cap \mathcal{M}\right) \ast \left(\dot{\MPB}^1_\alpha \cap \mathcal{M}_\lambda \right)$-names for the branch $\{t \in T_\alpha \mid t \leq \theta,\, \Lev_{T_\alpha}(t) < \lambda\}$, by the Claim \ref{claim: countably names for each element} and the discussion following it.

If $\theta, \tau$ are elements in the tree $T_\alpha$ with $\Lev_{T_\alpha}(\theta), \Lev_{T_\alpha}(\tau) \geq \lambda$, then we may apply Claim \ref{separation claim} for the countably many possible pairs of names for the branches below $\lambda$ that $\theta$ and $\tau$ contributes, and obtain countably many separation pairs. Let us call this countable collection of separating witness, a \emph{$\lambda$-separating witness} for $\theta$ and $\tau$.

Let $B\in\mathcal{F}$ be as in the conclusion of Claim \ref{separation claim}. By a repeated usage of the conclusion of Claim \ref{separation claim}, for every condition $p \in \MPB^2_\beta \cap \mathcal{M}$, and every $\lambda \in B$, $\mathcal{M}_\lambda \cap \kappa=\lambda$, and there are $\lambda$-compatible conditions $p', p''\leq p$
such that
 \begin{itemize}
 \item $p' \upharpoonright \mathcal{M}_\lambda = p'' \upharpoonright \mathcal{M}_\lambda$.
 \item For every $\alpha \in \beta \cap \mathcal{M}_\lambda$, any pair of elements above $\lambda$ in $\dom(p'(\alpha))\times \dom(p''(\alpha))$ has a $\lambda$-separating witness in $\mathcal{M}_\lambda$ relative to $p'\restriction \a, p''\restriction \a$.
 \end{itemize}
We call this pair $(p', p'')$ a \emph{$\lambda$-separating pair}. In order to make sure that the process of producing a $\lambda$-separating pair converges, we used the fact that it is possible to restore $\lambda$-compatibility without changing the domain of the condition outside of $\mathcal{M}_\lambda$. Note that $\dom(p)$ might contain elements from $\mathcal{M} \setminus \mathcal{M}_\lambda$ which are not treated.

Now let $\langle p_\lambda \mid \lambda < \kappa\rangle \in \mathcal{M}$ be a sequence of conditions in $\MPB^2_\beta$.\footnote{Recall that if there exists a sequence of conditions $\langle p_\lambda \mid \lambda < \kappa\rangle$ which contradicts the $\kappa$-Knaster property of $\MPB^2_\beta$, then there is such a sequence in $\mathcal{M}$ as well, by elementarity.}

For every $\lambda\in B$, $p_\lambda$ can be extended to a $\lambda$-separating pair $(p'_\lambda, p''_\lambda) \in \mathcal{M}$. Let $s_\lambda \in \mathcal{M}_\lambda$ be the list of separating witnesses.

The function that sends $\lambda$ to $(s_\lambda, p' \restriction \mathcal{M}_\lambda)$ is regressive. By the normality of the weakly compact filter (recall that every member of $\mathcal{F}$ is positive with respect to the weakly compact filter), and by further shrinking if necessary, we may assume that there are $s_*, p_*$ such that on a positive set $D \in \mathcal{S}$,  for all $\lambda \in D$, $s_* = s_\lambda$ and $p_* = p' \restriction \mathcal{M}_\lambda$.

Moreover, by intersecting $D$ with a club, we may assume that for every $\lambda \in D$ and $\lambda' \in D$ above $\lambda$, $p'_\lambda, p''_\lambda  \in \mathcal{M}_{\lambda'}$ (in particular, the domain of $p'_\lambda, p''_\lambda$ as well as the domain of $p'_\lambda(\alpha), p''_\lambda(\alpha)$ are subsets of $\mathcal{M}_{\lambda'}$). By additional shrinking of $D$, if needed, we may assume that the sets $\supp(p'_\lambda) \cup \supp(p''_\lambda)$ form a $\Delta$-system with a root $\Lambda$, in the sense that, if $\lambda < \lambda'$ are in $D$, and $\alpha \in \left(\supp(p'_\lambda) \cup \supp(p''_\lambda)\right) \setminus \Lambda$, then $\alpha \notin \supp(p'_{\lambda'})
 \cup \supp(p''_{\lambda'})$. Without loss of generality, $\Lambda \subseteq \mathcal{M}_\lambda$ for $\lambda = \min D$.

We claim that for any $\lambda < \lambda'$ in $D$, $p_\lambda$ is compatible with $p_{\lambda'}$, and moreover this compatibility is witnessed by the condition $q$, which is defined by $q(\a) = p'_\lambda(\a) \cup p''_{\lambda'}(\a)$ for every $\a < \beta$.
It is enough to show that $q$ is a condition. Clearly, $\dom(q)$ is at most countable. Therefore, it is enough to show that $q\restriction \gamma$ forces that $q(\gamma)$ is a condition for all $\gamma < \beta$.
We prove this by induction on $\gamma < \beta$.

For $\gamma = 0$, $q(0)\in \Col(\aleph_1, <\kappa)$, since it is the union of two conditions that have the same intersection with $\mathcal{M}_\lambda$, and have disjoint domains above it.

Assume that $q\restriction \gamma$ is a condition.
We may assume that $T_\gamma=\Phi(\gamma)$ is a $\MPB^2_\gamma \ast \dot{\MPB}^1_\gamma$-name for a $\kappa$-Aronszajn tree, as otherwise the forcing at stage $\gamma$ is trivial. We may also assume that $\gamma \in \Lambda$, since otherwise either $\gamma \notin \supp(p'_\lambda)$ or $\gamma \notin \supp(p''_{\lambda'})$.
In order to show that $q \restriction \gamma \Vdash$`` $q (\gamma)$ is a condition'', we have to show that if $t, t' \in \dom(q(\gamma))$
 and $q(\gamma)(t)=q(\gamma)(t')$, then $q\restriction \gamma \Vdash_{\MPB^2_\gamma}$``$ 1_{\mathbb{P}^1_\gamma} \Vdash_{\mathbb{P}^1_\gamma} \check{t} \perp_{T_\gamma} \check{t'}$''.

We may suppose that both of $t$ and $t'$ are above $\lambda$, as otherwise we can use the fact $p'_\lambda \upharpoonright \mathcal{M}_\lambda = p''_{\lambda'} \upharpoonright \mathcal{M}_\lambda'$ and the fact that $\gamma \in \mathcal{M}_\lambda$,
 to conclude the result.

Recall that $(p'_\lambda, p''_\lambda)$ is a separating pair. Let $\dot{b}_t$ be one of the countably many possible names for branches below $\lambda$ of elements below $t$ and let $\dot{b}_{t'}$ be a corresponding name for $t'$. The separating witness $\langle \bar{p}_n, \tau_n, \theta_n \mid n < \omega\rangle$ was stabilized for elements in $D$, and thus $(p'_\lambda \restriction \gamma, \bar p_n \restriction \gamma) \Vdash$``$\check{\tau_n} \in \dot{b}_t$'' and $(p''_\lambda \restriction \gamma, \bar p_n \restriction \gamma) \Vdash$``$ \check{\theta}_n \in \dot{b}_{t^\prime}$'', where $\tau_n \neq \theta_n$.
By the induction hypothesis, $q \restriction \gamma$ is a condition and it is stronger than $p'_\lambda\restriction \gamma$ and $p''_{\lambda'} \restriction \gamma$. Let us denote, temporarily by $\tilde{t}$ the element in the $\lambda$-th level of $T_\gamma$ above $\dot{b}_t$ and by $\tilde{t}'$ the element in the $\lambda'$-th level of $T_\gamma$ above $\dot{b}_{t'}$.
We obtained that for all $n < \omega$, $(q \restriction \gamma, \bar p_n) \Vdash$``$\tilde{t}\perp_{T_\gamma} \tilde{t}'$ ''. Now if $q\restriction \gamma \not\Vdash_{\MPB^2_\gamma}$``$1_{\MPB^1_\gamma}\Vdash_{\mathbb{P}^1_\gamma} \check{t}\perp \check{t}' "$, then there is a condition $q^\prime \leq q\restriction \gamma$ and $\bar p\in \mathbb{P}^1_\gamma$ such that $(q^\prime, \bar p)\Vdash \tilde{t} = \tilde{t}^\prime$. But $\bar p$ is compatible with $\bar p_n$, for some $n < \omega$. As  $q^\prime$ is stronger than $q \restriction \gamma$,
 $(q^\prime, \bar p_n \restriction \gamma)\Vdash$``$ \tilde{t} =\tilde{t}^\prime$'',
it follows that
 \[
 (q^\prime, \bar p_n \restriction \gamma)\Vdash \text{~``~} \check{\theta}_n \leq_{T_\gamma} \tilde{t}^\prime=\tilde{t}\text{~''~} ~~~\&~~~~ (q^\prime, \bar p_n \restriction \gamma)\Vdash \text{~``~} \check{\tau}_n \leq_{T_\gamma} \tilde{t}\text{~''.~}
 \]
This is in contradiction with the choice of $\theta_n$ and $\tau_n$. Since this is true for all possible $\tilde{t} \leq_{T_\gamma} t$ and $\tilde{t'} \leq_{T_\gamma} t'$, we conclude that they are forced to be incompatible.

If $\gamma$ is a limit ordinal and $q \restriction \bar\gamma$ is a condition for all
$\bar\gamma < \gamma$, then $q \restriction \gamma$ is a condition as well.
Lemma \ref{chain condition lemma of p-2} follows.
\end{proof}

The next lemma follows from Lemmas \ref{chain condition of P-1}  and \ref{chain condition lemma of p-2}
\begin{lemma}
\label{chain condition for the main forcing}
For every $\alpha \leq \delta$, $\MPB^2_\alpha \ast \dot{\MPB}^1_\alpha$ satisfies the $\kappa$-c.c. In particular $\MPB= \MPB^2_{\delta} \ast \dot{\MPB}^1_{\delta}$ satisfies the $\kappa$-c.c.
\end{lemma}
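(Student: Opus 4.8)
The plan is to derive both parts from the standard fact that, for a \emph{regular} cardinal $\kappa$, a two-step iteration of $\kappa$-c.c.\ forcings is again $\kappa$-c.c., and then simply to feed in the two chain-condition lemmas already proved. Since $\kappa$ is weakly compact, it is in particular regular and far above $\aleph_1$. By Lemma \ref{chain condition lemma of p-2}, $\MPB^2_\alpha$ is $\kappa$-Knaster, hence $\kappa$-c.c. By Lemma \ref{chain condition of P-1}(a), $\Vdash_{\MPB^2_\alpha}$``$\dot{\MPB}^1_\alpha$ is c.c.c.''; as $\aleph_1 \leq \kappa$, a c.c.c.\ forcing has no antichain of size $\kappa$, so $\dot{\MPB}^1_\alpha$ is forced to be $\kappa$-c.c.\ as well. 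Thus both iterands are $\kappa$-c.c.\ over their respective models, so part (a) will follow once the general fact is in place, and part (b) is just the instance $\alpha=\delta$.

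For the general fact, I would argue as follows. Suppose $\MPB$ is $\kappa$-c.c., $\Vdash_\MPB$``$\dot{\MQB}$ is $\kappa$-c.c.'', and, toward a contradiction, let $\{(p_\xi, \dot q_\xi) \mid \xi < \kappa\}$ be an antichain in $\MPB \ast \dot{\MQB}$. The key observation is that if $(p_\xi, \dot q_\xi) \perp (p_\eta, \dot q_\eta)$, then no common extension of $p_\xi$ and $p_\eta$ can force $\dot q_\xi$ and $\dot q_\eta$ to be compatible; hence the set of conditions forcing $\dot q_\xi \perp \dot q_\eta$ is dense below every common extension of $p_\xi$ and $p_\eta$. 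Passing to a $\MPB$-generic filter $G$ and letting $\dot J$ name $\{\xi < \kappa \mid p_\xi \in G\}$, genericity then gives that $\{\dot q_\xi^G \mid \xi \in J\}$ is an antichain in $\dot{\MQB}^G$, so $\Vdash |\dot J| < \kappa$. Since $\kappa$ is regular this yields $\Vdash \sup \dot J < \kappa$, and since $\MPB$ is $\kappa$-c.c.\ a maximal antichain of conditions deciding $\sup \dot J$ has size $<\kappa$, so there is a single $\gamma < \kappa$ with $\Vdash \sup \dot J < \gamma$. But $p_\xi \Vdash \xi \in \dot J$ for every $\xi$, so any $\xi > \gamma$ gives $p_\xi \Vdash \sup \dot J \geq \xi > \gamma$, a contradiction.

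Applying this with $\MPB = \MPB^2_\alpha$ and $\dot{\MQB} = \dot{\MPB}^1_\alpha$ establishes part (a), and taking $\alpha = \delta$ gives part (b). The argument is essentially routine, and the only step requiring care is the final one of the general fact, where the regularity of $\kappa$ together with its $\kappa$-c.c.\ is used to convert ``$\Vdash |\dot J| < \kappa$'' into a uniform bound $\gamma$; this is precisely the step that would fail at a singular $\kappa$, and it is where the regularity of $\kappa$ (guaranteed here by weak compactness) enters. Note that although Lemma \ref{chain condition lemma of p-2} supplies the stronger $\kappa$-Knaster property for the first iterand, only its $\kappa$-c.c.\ consequence is needed for this lemma.
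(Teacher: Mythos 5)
Your proposal is correct and matches the paper's intent exactly: the paper offers no written proof, merely asserting that the lemma "follows from Lemmas \ref{chain condition of P-1} and \ref{chain condition lemma of p-2}", and the standard fact you supply (a two-step iteration of $\kappa$-c.c.\ forcings is $\kappa$-c.c.\ for regular $\kappa$) is precisely the missing glue. Your proof of that fact, including the use of regularity to extract the uniform bound $\gamma$, is sound.
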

Putting the above lemmas together, we obtain the following result.
\begin{lemma}
\label{summarizing cardinal structure in p extensions}
Suppose $G$ is $\MPB$-generic over $V$. Then
\begin{enumerate}
\item [(a)] $\aleph_1^{V[G]}=\aleph_1$, $\aleph_2^{V[G]}=\kappa$ and $\aleph_3^{V[G]}=\kappa^+$.

\item [(b)] $V[G] \models$``$2^{\aleph_0}=2^{\aleph_1}=\delta$''.
\end{enumerate}
\end{lemma}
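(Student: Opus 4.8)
The plan is to read off the cardinal structure of $V[G]$ from the chain condition and closure facts already established, together with the action of the Levy collapse sitting at the bottom of $\MPB^2_\delta$. First I would recall that the full forcing $\MPB = \MPB^2_\delta \ast \dot{\MPB}^1_\delta$ factors as the $\aleph_1$-closed iteration $\MPB^2_\delta$ (Lemma \ref{closure of P}) followed by a c.c.c.\ forcing (Lemma \ref{chain condition of P-1}(b)), and that by Lemma \ref{chain condition for the main forcing} the whole forcing satisfies the $\kappa$-c.c. The $\aleph_1$-closure of $\MPB^2_\delta$ guarantees it adds no new $\omega$-sequences, hence preserves $\aleph_1$; since $\dot{\MPB}^1_\delta$ is c.c.c., it too preserves $\aleph_1$, so $\aleph_1^{V[G]} = \aleph_1$.

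For the claim $\aleph_2^{V[G]} = \kappa$, I would argue in two directions. On the one hand, $\MQB^2_0 = \Col(\aleph_1, <\kappa)$ collapses every cardinal strictly between $\aleph_1$ and $\kappa$ to have size $\aleph_1$; since the remainder of $\MPB^2_\delta$ is $\aleph_1$-closed and $\dot{\MPB}^1_\delta$ is c.c.c., no new cardinals are resurrected in that interval, so $\kappa$ is the first cardinal above $\aleph_1$ in $V[G]$. On the other hand, $\kappa$ itself is preserved: the $\kappa$-c.c.\ of $\MPB$ (Lemma \ref{chain condition for the main forcing}(b)) ensures that every cardinal $\geq \kappa$ is preserved, and in particular $\kappa$ remains a cardinal. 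Combining, $\kappa = \aleph_2^{V[G]}$. For part (a) it remains to identify $\aleph_3^{V[G]}$; here I would use that $\MPB$ has size $\delta$ (so collapses nothing above $\delta$) and satisfies the $\kappa$-c.c., together with $\GCH$ in $V$, to check that no cardinal in the interval $(\kappa, \kappa^+]$ is collapsed, giving $\aleph_3^{V[G]} = (\kappa^+)^V = \kappa^+$.

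For part (b), the computation of $2^{\aleph_0}$ and $2^{\aleph_1}$, I would count nice names. Since $\MPB$ is $\kappa$-c.c.\ of size $\delta$ with $\GCH$ holding in $V$ and $\delta$ regular above $\kappa$, a standard nice-name argument bounds the number of subsets of $\aleph_1 = \aleph_1^{V[G]}$ in $V[G]$ by $\delta^{\kappa} = \delta$ (using $\GCH$ and $\cf(\delta) > \kappa$), giving $2^{\aleph_1} \leq \delta$ in $V[G]$. For the lower bound, the iteration of length $\delta$ is designed so that densely often genuinely new reals (or new subsets of $\aleph_1$) are added, forcing $2^{\aleph_0} \geq \delta$; since $2^{\aleph_0} \leq 2^{\aleph_1} \leq \delta$, all three quantities collapse to $\delta$.

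The main obstacle I expect is the bookkeeping in part (b): one must verify both that the nice-name count really yields the upper bound $\delta$ (which requires care with how names interact across the two interleaved iterations $\MPB^2$ and $\MPB^1$, and with the support conventions — countable support for $\MPB^2$ against finite support for $\MPB^1$) and that the construction genuinely produces $\delta$-many distinct new subsets for the lower bound. Parts (a) is comparatively routine once the closure, chain condition, and collapse facts are invoked in the right order, so I would present it first and then devote the bulk of the argument to the cardinal-arithmetic estimate in (b).
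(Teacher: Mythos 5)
The paper gives no proof of this lemma beyond the phrase ``putting the above lemmas together,'' and your argument is exactly the intended assembly: $\aleph_1$-closure of $\MPB^2_\delta$ plus the c.c.c.\ of $\dot{\MPB}^1_\delta$ for $\aleph_1$, the collapse $\Col(\aleph_1,<\kappa)$ plus the $\kappa$-c.c.\ of Lemma \ref{chain condition for the main forcing} for $\aleph_2=\kappa$ and $\aleph_3=\kappa^+$, and a nice-name count for the upper bound in (b). The only point you leave implicit is the lower bound $2^{\aleph_0}\geq\delta$, which follows because $\Phi$ hits a name for a (ground-model, special) $\aleph_1$-Aronszajn tree unboundedly often in $\delta$ and each resulting nontrivial Baumgartner step adds a new real; this is routine and consistent with your plan.
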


\subsection{Completing the proof of Theorem \ref{main theorem2}.}
\label{Completing the proof of Theorem main theorem2}
In this subsection we complete the proof of Theorem \ref{main theorem2}. The next lemma follows from Lemma \ref{chain condition for the main forcing}
\begin{lemma}
Suppose $X \in V[G_{\MPB}]$ and $X \subseteq \kappa$. Then  $X \in V[G_{\MPB^2_\alpha \ast \MPB^1_\alpha}]$,  for some $\alpha < \delta$.
\end{lemma}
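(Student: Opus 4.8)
The plan is to run the standard ``$\kappa$-c.c.\ captures subsets of $\kappa$'' argument, using the $\kappa$-chain condition of $\MPB$ established in Lemma~\ref{chain condition for the main forcing}(b) together with the regularity of $\delta$ and the bounded support of conditions. Let $\dot X$ be a $\MPB$-name forced to be a subset of $\kappa$. First I would, for each $\xi < \kappa$, fix a maximal antichain $A_\xi \subseteq \MPB$ deciding the statement ``$\check\xi \in \dot X$''. Since $\MPB$ satisfies the $\kappa$-c.c., each $A_\xi$ has size $<\kappa$, so the set $C = \bigcup_{\xi < \kappa} A_\xi$ of all conditions occurring in these antichains has size at most $\kappa \cdot \kappa = \kappa$.

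The key point is that every condition has bounded support below $\delta$. Indeed, a condition of $\MPB = \MPB^2_{\delta} \ast \dot{\MPB}^1_{\delta}$ has a countable-support $\MPB^2$-part and a finite-support $\MPB^1$-part, so its support is a countable subset of $\delta$; as $\delta$ is regular with $\cf(\delta) = \delta > \kappa > \aleph_1 > \omega$, this support is bounded below $\delta$. For each $p \in C$ let $\alpha_p < \delta$ bound $\supp(p)$, and set $\alpha = \sup_{p \in C} \alpha_p$. Since $|C| \leq \kappa < \delta = \cf(\delta)$, we get $\alpha < \delta$, and every condition in every $A_\xi$ lies in the regular subforcing $\MPB_\alpha = \MPB^2_\alpha \ast \dot{\MPB}^1_\alpha \lessdot \MPB$.

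Next I would transfer the antichains to $\MPB_\alpha$. Because $\MPB_\alpha \lessdot \MPB$ and $A_\xi \subseteq \MPB_\alpha$, each $A_\xi$ is still a maximal antichain of $\MPB_\alpha$: for $q \in \MPB_\alpha$, compatibility of $q$ with some $a \in A_\xi$ in $\MPB$ is witnessed by some $r \leq q, a$, and then $r \restriction \alpha \leq q, a$ witnesses their compatibility in $\MPB_\alpha$. Consequently the name
\[
\dot Y = \{(\check\xi, p) \mid \xi < \kappa,\ p \in A_\xi,\ p \Vdash_{\MPB} \check\xi \in \dot X\}
\]
is a $\MPB_\alpha$-name, and since the $A_\xi$ decide membership in $\dot X$, we have $\Vdash_{\MPB} \dot X = \dot Y$. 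Writing $G_{\MPB_\alpha} = G_{\MPB} \cap \MPB_\alpha$ for the induced generic, we conclude $X = \dot Y^{G_{\MPB_\alpha}} \in V[G_{\MPB_\alpha}] = V[G_{\MPB^2_\alpha \ast \MPB^1_\alpha}]$, as required.

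The only genuinely non-routine ingredients are that $\MPB_\alpha$ is a regular subforcing of $\MPB$ (standard for the two-step and iterated forcings with the support conventions used here, and implicit throughout Section~\ref{section2}) and the boundedness of countable supports, which relies on $\cf(\delta) > \omega$; both are immediate in the present setup, so I expect no real obstacle beyond bookkeeping.
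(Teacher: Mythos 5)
Your proof is correct and is exactly the standard argument the paper intends: the paper simply asserts that the lemma ``follows from Lemma \ref{chain condition for the main forcing}'', i.e.\ from the $\kappa$-c.c.\ of $\MPB$, and your write-up fills in the routine details (antichains of size $<\kappa$ deciding membership, countable hence bounded supports, regularity of $\delta$, and the transfer to the regular subforcing $\MPB_\alpha$). No gaps.
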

We start by showing that the special Aronszajn tree property holds in $V[G_{\MPB}]$.
\begin{lemma}
\label{specializing N-1 trees}
$\MPB$ forces $\TSP(\aleph_1)$.
\end{lemma}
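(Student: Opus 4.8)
The plan is to catch each $\aleph_1$-Aronszajn tree of $V[G_\MPB]$ at some stage of the $\MPB^1$-iteration, specialize it there by an instance of Baumgartner's forcing, and then observe that the specializing function persists to the final model. The existence of $\aleph_1$-Aronszajn trees is Aronszajn's $\ZFC$ theorem, and $\aleph_1^{V[G_\MPB]}=\aleph_1$ by Lemma \ref{summarizing cardinal structure in p extensions}(a), so the whole content of the lemma is that every such tree is special.

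First I would reflect an arbitrary $\aleph_1$-Aronszajn tree $T\in V[G_\MPB]$ to a bounded stage. Since $|T|=\aleph_1<\kappa$, we may code $T$ as a subset of $\kappa$, so by the factoring lemma preceding this one, $T\in V[G_{\MPB_\alpha}]$ for some $\alpha<\delta$. As $\omega_1$ is preserved throughout $\MPB$, the levels of $T$ remain countable in every model $M$ with $V[G_{\MPB_\alpha}]\subseteq M\subseteq V[G_\MPB]$, and any cofinal branch of $T$ present in such an $M$ would persist to $V[G_\MPB]$; since $T$ has no cofinal branch there, $T$ is $\aleph_1$-Aronszajn in every such intermediate model $M$.

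Next I would arrange for the bookkeeping to specialize $T$. Fix a $\MPB_\alpha$-name $\dot T$ for $T$; by the $\kappa$-c.c.\ (Lemma \ref{chain condition for the main forcing}(a)) together with $\GCH$, we may take $\dot T$ to be a nice name lying in $H(\delta)$. Since $\Phi^{-1}(\dot T)$ is unbounded in $\delta$, choose $\beta\geq\alpha$ with $\Phi(\beta)=\dot T$, regarding $\dot T$ as a $\MPB^2_{\beta+1}\ast\dot\MPB^1_\beta$-name through the natural regular embedding $\MPB^2_\alpha\ast\dot\MPB^1_\alpha\lessdot\MPB^2_{\beta+1}\ast\dot\MPB^1_\beta$ (valid for $\alpha\leq\beta$). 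In the intermediate extension $V[G_{\MPB^2_{\beta+1}\ast\MPB^1_\beta}]$ the name $\dot T$ evaluates to $T$, because $G_{\MPB_\alpha}\subseteq G_{\MPB^2_{\beta+1}\ast\MPB^1_\beta}$, and by the previous paragraph $T$ is $\aleph_1$-Aronszajn there. Hence, by the definition of the $\MPB^1$-iteration (where stage $\beta$ forces with $\mathbb{B}(\Phi(\beta))$ precisely when $\Phi(\beta)$ is evaluated as an $\aleph_1$-Aronszajn tree), stage $\beta$ forces with $\mathbb{B}(\dot T)=\mathbb{B}(T)$, which by Lemma \ref{basic properties of B(T)}(b) adds a specializing function $F\colon T\to\omega$.

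Finally, the statement that $F$ specializes $T$, namely that $F(x)\neq F(y)$ whenever $x<_T y$, is absolute and concerns only the fixed set $T$, so $F$ remains a specializing function in $V[G_\MPB]$ and $T$ is special there. The crux, which I expect to be the main point to verify carefully, is the claim that $T$ stays $\aleph_1$-Aronszajn in the intermediate model at stage $\beta$: this rests on $\omega_1$ being preserved by $\MPB$, so that the levels of $T$ stay countable and countability reflects down, together with the upward persistence of cofinal branches to $V[G_\MPB]$, where $T$ has none. Granting this, the bookkeeping fires Baumgartner's forcing on the genuine tree $T$ at stage $\beta$, and the remaining steps---coding $T$ as a subset of $\kappa$, invoking the factoring lemma, and the absoluteness of the specialization property---are routine.
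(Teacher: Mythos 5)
Your proposal is correct and follows essentially the same route as the paper: catch a name for the tree via the bookkeeping function $\Phi$ at some stage below $\delta$, let Baumgartner's forcing specialize it there, and note that the specializing function persists to $V[G_\MPB]$ since cardinals are preserved. Your extra care in checking that $T$ remains $\aleph_1$-Aronszajn at the catching stage (via upward persistence of cofinal branches) is a point the paper's proof leaves implicit, but it is the same argument in substance.
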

\begin{proof}
Let $T$ be an  $\aleph_1$-Aronszajn tree and let  $\dot{T}$ be a $\MPB$-name for it. Then for some $\alpha < \delta$ it is in fact a $\MPB_\alpha$-name and $\dot{T}=\Phi(\alpha)$.
Then
\[
\Vdash_{\MPB^2_{\alpha+1} \ast \dot{\MPB}^1_{\alpha+1}} \text{``}\dot{T} \text{~is specialized'',~}
\]
and hence there exists $F \in V[G_{\MPB^2_{\alpha+1} \ast \dot{\MPB}^1_{\alpha+1}}]$ which is a specializing function for $T$.
As $V[G_{\MPB}] \supseteq V[G_{\MPB^2_{\alpha+1} \ast \dot{\MPB}^1_{\alpha+1}}]$ and these models have the same cardinals,  $F$ is also a specializing function for $T$
in $V[G_{\MPB}]$.
\end{proof}
In order to show that the forcing notion $\MPB$ specializes all $\aleph_2$-Aronszajn trees, we need the following lemma which is an analogue of
Lemma \ref{properties of generalized specializing forcing}(b).

\begin{lemma}
\label{existence of specializing functions}
Suppose $\alpha < \delta$ and $\Phi(\alpha)$ is a $\MPB^2_\alpha \ast \dot{\MPB}^1_\alpha$-name for a $\kappa$-Aronszajn tree. Then
in the extension by
$\MPB^2_{\alpha+1}$, there exists a function $F: \kappa \times \omega_1 \to \omega_1$
which is a specializing function  of every generic interpretation of $\Phi(\alpha)$ by a $\MPB^1_{\alpha}$-generic filter.
\end{lemma}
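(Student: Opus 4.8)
The plan is to recognise this lemma as the instantiation of Lemma~\ref{properties of generalized specializing forcing}(b) at stage $\alpha$ of the main iteration, so that the argument reduces to checking that the hypotheses of that lemma hold at stage $\alpha$ and then identifying the forcing $\dot{\MQB}^2_\alpha$ with the appropriate specializing forcing.

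First I would unwind the definition of the main forcing. Since $\Phi(\alpha)$ is a $\MPB^2_\alpha \ast \dot{\MPB}^1_\alpha$-name for a $\kappa$-Aronszajn tree, the construction of $\MPB^2_{\alpha+1}$ gives $\Vdash_{\MPB^2_\alpha}$``$\dot{\MQB}^2_\alpha = \mathbb{B}_{\MPB^1_\alpha}(\Phi(\alpha))$'', whence $\MPB^2_{\alpha+1} = \MPB^2_\alpha \ast \mathbb{B}_{\MPB^1_\alpha}(\Phi(\alpha))$. Note also that, since $\kappa$ is weakly compact and hence has no $\kappa$-Aronszajn trees in $V$, we must have $\alpha \geq 1$, so that the stage-$0$ factor $\Col(\aleph_1, <\kappa)$ is already part of $\MPB^2_\alpha$.

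Next I would verify the hypotheses of Lemma~\ref{properties of generalized specializing forcing} with $\MPB = \MPB^2_\alpha$, $\dot{\MQB} = \dot{\MPB}^1_\alpha$ and $\dot{T} = \Phi(\alpha)$. The cardinal $\kappa$ is inaccessible in $V$; the two-step iteration $\MPB^2_\alpha \ast \dot{\MPB}^1_\alpha$ is $\kappa$-c.c.\ by Lemma~\ref{chain condition for the main forcing}(a); and it forces $\kappa = \aleph_2$, since $\Col(\aleph_1, <\kappa)$ collapses every cardinal in $(\aleph_1,\kappa)$ to $\aleph_1$ while the remaining iterands preserve both $\aleph_1$ and $\kappa$ (the $\MPB^2$-part is $\aleph_1$-closed by Lemma~\ref{closure of P}, the $\MPB^1$-part is c.c.c.\ by Lemma~\ref{chain condition of P-1}, and the whole two-step iteration is $\kappa$-c.c.). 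Finally, $\Phi(\alpha)$ is by assumption a name for a $\kappa$-Aronszajn tree, which we may take to live on $\kappa \times \omega_1$ with $\xi$-th level $\{\xi\} \times \omega_1$, exactly as required in the definition of $\mathbb{B}_{\MPB^1_\alpha}(\Phi(\alpha))$.

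Finally I would pass to the intermediate model $V[G^2_\alpha]$ and apply Lemma~\ref{properties of generalized specializing forcing}(b) directly: there, $\mathbb{B}_{\MPB^1_\alpha}(\Phi(\alpha))$ is precisely the forcing of that lemma, and its generic produces a function $F \colon \kappa \times \omega_1 \to \omega_1$ specializing every generic interpretation of $\Phi(\alpha)$ arising from a $\MPB^1_\alpha$-generic filter over $V[G^2_\alpha]$. Since the extension of $V[G^2_\alpha]$ by $\mathbb{B}_{\MPB^1_\alpha}(\Phi(\alpha))$ is exactly $V[G^2_{\alpha+1}]$, this $F$ lies in $V[G^2_{\alpha+1}]$ and is as required. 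I expect no genuine obstacle here: the substantive content is already packaged in Lemma~\ref{properties of generalized specializing forcing}(b) --- whose conditions only ever identify the colours of points forced incompatible in $\Phi(\alpha)$, so that the generic colouring specializes every interpretation simultaneously --- and in the $\kappa$-c.c.\ of Lemma~\ref{chain condition for the main forcing}(a). The one point demanding care is the bookkeeping of the iteration: $F$ is added by the $\MPB^2$-side forcing alone and works uniformly across all $\MPB^1_\alpha$-generics, which is precisely the design feature (specializing \emph{names} rather than trees) emphasised after the definition of the main forcing.
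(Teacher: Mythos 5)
Your proposal is correct and matches the paper's treatment: the paper states this lemma without proof, presenting it simply as the instantiation at stage $\alpha$ of Lemma~\ref{properties of generalized specializing forcing}(b), which is exactly the reduction you carry out. Your verification of the hypotheses (the $\kappa$-c.c.\ of $\MPB^2_\alpha \ast \dot{\MPB}^1_\alpha$ via Lemma~\ref{chain condition for the main forcing}(a), the collapse making $\kappa=\aleph_2$, and the identification $\MPB^2_{\alpha+1}=\MPB^2_\alpha\ast\mathbb{B}_{\MPB^1_\alpha}(\Phi(\alpha))$) is just the bookkeeping the paper leaves implicit.
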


\begin{lemma}
\label{specializing N-2 trees}
$\MPB$ forces $\TSP(\kappa)$.
\end{lemma}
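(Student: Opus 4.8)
The plan is to show that $\MPB$ forces that there are $\kappa$-Aronszajn trees and that every such tree is special, where $\kappa = \aleph_2^{V[G_\MPB]}$. The general strategy mirrors the treatment of $\TSP(\aleph_1)$ in Lemma \ref{specializing N-1 trees}, but using the bookkeeping to catch \emph{names} for $\aleph_2$-Aronszajn trees rather than the trees themselves, and exploiting the fact (emphasized after the definition of the main forcing) that $\MPB^2_{\alpha+1}$ specializes \emph{every} $\MPB^1_\alpha$-name for a $\kappa$-Aronszajn tree via Lemma \ref{existence of specializing functions}, regardless of the generic for $\MPB^1$.

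First I would verify that there \emph{exists} a $\kappa$-Aronszajn tree in $V[G_\MPB]$. Since $\MPB^2_0 = \Col(\aleph_1, <\kappa)$ collapses $\kappa$ to become $\aleph_2$ and the whole forcing is $\kappa$-c.c.\ (Lemma \ref{chain condition for the main forcing}) and $\aleph_1$-preserving, the standard fact that the L\'evy collapse of a weakly compact (indeed just inaccessible) cardinal produces a $\kappa$-Aronszajn tree, together with the preservation of cardinals, yields such a tree in the final model. Next, the heart of the argument: let $T$ be an arbitrary $\kappa$-Aronszajn tree in $V[G_\MPB]$ with $\MPB$-name $\dot T$. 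By the previous lemma (every subset of $\kappa$ appears at some bounded stage) and the $\kappa$-c.c., I would argue that $\dot T$ is (equivalent to) a $\MPB_\alpha = \MPB^2_\alpha \ast \dot\MPB^1_\alpha$-name for some $\alpha < \delta$, and that the bookkeeping function $\Phi$ can be arranged so that $\Phi(\alpha) = \dot T$, i.e.\ $\dot T$ is among the names the iteration processes. Here I must check that $\dot T$ is forced by $1_{\MPB^2_\alpha \ast \dot\MPB^1_\alpha}$ to be a $\kappa$-Aronszajn tree, so that $\dot\MQB^2_\alpha = \mathbb{B}_{\MPB^1_\alpha}(\Phi(\alpha))$ is really the specializing forcing at stage $\alpha$.

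Given this, Lemma \ref{existence of specializing functions} applies at stage $\alpha$: in $V[G_{\MPB^2_{\alpha+1}}]$ there is a function $F\colon \kappa \times \omega_1 \to \omega_1$ that specializes \emph{every} interpretation of $\Phi(\alpha)$ by a $\MPB^1_\alpha$-generic, in particular the interpretation $T$ given by the actual generic $G_\MPB \restriction \MPB^1_\alpha$. I would then conclude, exactly as in Lemma \ref{specializing N-1 trees}, that since $V[G_{\MPB^2_{\alpha+1}}] \subseteq V[G_\MPB]$ and (by the $\kappa$-c.c.\ and $\aleph_1$-closure results) these models share the same cardinals $\aleph_1$ and $\aleph_2 = \kappa$, the function $F$ remains a specializing function for $T$ in the full extension $V[G_\MPB]$.

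The main obstacle I expect is the bookkeeping/absoluteness step, namely justifying that an arbitrary $\kappa$-Aronszajn tree $T$ of $V[G_\MPB]$ really is captured as some $\Phi(\alpha)$ as a $\MPB^2_\alpha \ast \dot\MPB^1_\alpha$-name that is \emph{forced to be Aronszajn}. The subtlety is that $T$ lives in the final model, and one must reflect its name down to a bounded stage and then confront the two-dimensional structure of the iteration: the specializing forcing $\MPB^2_{\alpha}$ is defined in $V$ independently of the $\MPB^1$-generic, while $\dot T$ naturally depends on both coordinates. The device that overcomes this is precisely the point stressed before Subsection \ref{Properties of the forcing notion P}: $\mathbb{B}_{\MPB^1_\alpha}(\dot T)$ specializes names, so it does not matter whether $T$ is special or Suslin in $V[G_{\MPB^1_\alpha}]$ --- the specializing function is manufactured in the $\MPB^2$-extension and then survives. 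I would therefore organize the proof so that the only genuine content is the reduction of $\dot T$ to an appropriate name handled by $\Phi$, and then invoke Lemma \ref{existence of specializing functions} as a black box.
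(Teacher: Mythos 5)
Your proposal follows essentially the same route as the paper's proof: the existence of a $\kappa$-Aronszajn tree comes from the L\'evy collapse $\Col(\aleph_1,<\kappa)$ together with cardinal preservation, an arbitrary tree is caught by the bookkeeping as $\Phi(\alpha)$ for some $\alpha<\delta$, Lemma \ref{existence of specializing functions} produces a specializing function in $V[G_{\MPB^2_{\alpha+1}}]$ valid for every $\MPB^1_\alpha$-generic interpretation, and cardinal preservation carries it to the final model. The extra care you take over the reduction of $\dot T$ to a name forced to be Aronszajn is a reasonable elaboration of a step the paper treats as routine, but it does not change the argument.
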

\begin{proof}
First, there is an $\aleph_2$-Aronszajn tree in the generic extension, as the forcing $\Col(\omega_1, < \kappa)$ adds a special $\aleph_2$-Aronszajn tree and cardinals are preserved in the rest of the iteration.

Let $T$ be a  $\kappa$-Aronszajn tree and let $\dot{T}$ be a $\MPB$-name for it. Then for some $\alpha < \delta$ it is in fact a $\MPB_\alpha$-name and $\dot{T}=\Phi(\alpha)$.
By Lemma \ref{existence of specializing functions}, there exists $F \in V[G_{\MPB^2_{\alpha+1} \ast \dot{\MPB}^1_\alpha}]$ which specializes $T$.
As $V[G_{\MPB}]$ is a cardinal preserving extension of $V[G_{\MPB^2_{\alpha+1} \ast \dot{\MPB}^1_\alpha}]$, $F$ also witnesses that $T$
is  specialized in $V[G_{\MPB}]$.
The lemma follows.
\end{proof}

\section{Specializing  names of higher Aronszajn trees: An abstract approach}
\label{section:abstract}
Let us note that in the  proof of Theorem \ref{main theorem2}, we did not use the way the forcing notions $\MPB^1_\alpha, \alpha \leq \delta$ were defined, but only the fact that they satisfy the $\text{c.c.c.}$ and that the forcing notions $\MPB^1_\alpha, \a \leq \delta$, do not add new branches to trees of height $\aleph_1$. In this section we present the above situation in an abstract way that will be used for the next sections of this paper.

Thus suppose that $\mu < \kappa < \delta$ are regular cardinals. Let $\Phi$ and $ \Psi$
be two functions such that:
\begin{itemize}
\item  $\Phi: \delta \to H(\delta)$ is such that for each $x \in H(\delta), \Phi^{-1}(x)$ is unbounded in $\delta$.
\item $\Psi: \delta \to H(\delta)$ is such that for each $\a<\delta, \Psi(\a)$ is a forcing notion.
\end{itemize}
Let
\[
\langle \langle \MPB^2_\alpha \mid \alpha \leq \delta\rangle, \langle \dot{\MQB}^2_\alpha \mid \alpha < \delta   \rangle \rangle.
\]
be a forcing iteration of length $\delta$, defined as follows:

Set $\MQB^2_0=\Col(\mu, < \kappa)$.

If $\alpha$ is a limit ordinal and $\cf(\alpha) \geq \mu$, let $\MPB^2_\alpha$ be the direct limit of the forcing notions
$\MPB^2_\beta, \beta < \alpha$. If $\alpha$ is a limit ordinal and $\cf(\alpha) < \mu$, let $\MPB^2_\alpha$ be the inverse limit of the forcing notions
$\MPB^2_\beta, \beta < \alpha$.

Now suppose that $\alpha=\beta+1$ is a successor ordinal. Let us assume that $\Psi(\beta)$ is such that $\Psi(\beta)=\MPB^2_\beta \ast \dot{\MPB}^1_\beta$ for some
$\MPB^2_\beta$-name $\dot{\MPB}^1_\beta$, where $\MPB^1_\beta$ is an iteration of length $\leq \beta$ with $<\zeta$-supports, for some $\zeta<\mu$, of forcing notions of size $<\kappa$. Moreover, let us assume that $\Phi(\beta)$ is a $\Psi(\beta)$-name for a $\kappa$-Aronszajn tree with the universe $\kappa \times \mu$.
Then let $\dot{\MQB}^2_\beta$ be a $\MPB^2_\beta$-name, such that in the generic extension $V[G_{\MPB^2_\beta}]$, the forcing notion
$\MQB^2_\beta$ is defined as follows:
\begin{itemize}
\item Conditions in $\MQB^2_\beta$ are partial functions $f: \kappa \times \mu \to \mu$ such that:
\begin{enumerate}
\item $\dom(f) \subseteq \kappa \times \mu$ has size $< \mu$.
\item If $s, t \in \dom(f)$ and $f(s) = f(t)$ then $\Vdash_{\MPB^1_\beta}$``$\check{s} \perp_{\Phi(\beta)} \check{t}$''.
\end{enumerate}
\item For $f, g \in \MQB^2_\beta, f \leq g$ if and only if $f \supseteq g$.
\end{itemize}
Otherwise, let  $\dot\MQB^2_\beta$ be a name for the trivial forcing notion.

It is obvious that the forcing notions $\MPB^2_\a, \a \leq \delta$ are $\mu$-directed closed.

Let us recall all of those properties which were used in the proof of Claim \ref{separation claim}.
\begin{definition}
We say that the triple $(\Phi, \Psi, \delta)$ is $(\mu, \kappa)$-suitable, if the following conditions hold. First, let $\MPB^2_\alpha, \dot{\MPB}^1_\alpha$, be defined as above using $\Phi$ and $\Psi$. Also, let $\langle \mathcal{M}_\lambda \mid \lambda < \kappa\rangle$ be a continuous chain of elementary submodels of the universe of size $<\kappa$ which contain all the relevant information. Let $\mathcal{M} = \bigcup_{\lambda < \kappa} \mathcal{M}_\lambda$.
\begin{enumerate}
\item $\mu < \kappa$ are regular cardinals and $\Phi, \Psi \colon \delta \to H(\delta)$ are as above.
\item For each $\a \leq \delta$ and $\gamma \in [\a, \delta], \Vdash_{\MPB^2_\gamma}$``$\dot{\MPB}^1_\a$ is $\mu$-c.c.''.
\item For each $\lambda < \kappa$ and $\a \in \mathcal{M}_\lambda \cap \delta$, if
\begin{enumerate}
\item $\MPB^2_\a \cap \mathcal{M}_\lambda \lessdot \MPB^2_\a \cap \mathcal{M}$.
\item $\Vdash_{\MPB^2_\a \cap \mathcal{M}}$`` $ \MPB^1_\a \cap \mathcal{M}_\lambda \lessdot \MPB^1_\a \cap \mathcal{M}$ and moreover, it is a sub-iteration''.
\item $\Phi(\a)$ is a $\MPB^2_\a \ast \dot{\MPB}^1_\a$-name for a $\kappa$-Aronszajn tree.
\item $\Phi(\a) \cap \mathcal{M}_\lambda$ is a $(\MPB^2_\a \cap \mathcal{M}_\lambda) \ast (\dot{\MPB}^1_\alpha \cap \mathcal{M}_\lambda)$-name for a $\lambda$-Aronszajn tree.
\end{enumerate}
Then forcing with
$(\MPB^2_\a \cap \mathcal{M} \ast \dot{\MPB}^1_\alpha) / \left( (\MPB^2_\a \cap \mathcal{M}) \ast (\dot{\MPB}^1_\alpha \cap \mathcal{M}_\lambda)\right)$ does not add any new branches to $\Phi(\a) \cap \mathcal{M}_\lambda$.
\end{enumerate}
\end{definition}
\begin{lemma}
\label{generalized chain condition lemma}
Suppose that $(\Phi, \Psi, \delta)$ are $(\mu, \kappa)$-suitable and $\kappa$ is weakly compact.
Then $\MPB^2_\a$ is $\kappa$-Knaster for every $\alpha \leq \delta$.
\end{lemma}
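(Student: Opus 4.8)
The plan is to follow the proof of Lemma~\ref{chain condition lemma of p-2} essentially verbatim, replacing every appeal to a concrete property of Baumgartner's forcing by the corresponding clause in the definition of $(\mu,\kappa)$-suitability. One argues by induction on $\beta \leq \delta$ that $\MPB^2_\beta$ is $\kappa$-Knaster, the base case $\beta = 1$ being $\Col(\mu, <\kappa)$. The cofinality bookkeeping is unchanged: if $\cf(\beta) > \kappa$, every $\kappa$-sized subset of $\MPB^2_\beta$ is absorbed into some $\MPB^2_\alpha$ with $\alpha < \beta$ and the inductive hypothesis applies; for $\cf(\beta) \leq \kappa$ one fixes an increasing cofinal sequence $\langle \beta_i \mid i < \cf(\beta)\rangle$, chooses $\mathcal{M} \prec H(\theta)$ of size $\kappa$ with $^{<\kappa}\mathcal{M} \subseteq \mathcal{M}$ containing all relevant parameters, and invokes \cite{hauser} to obtain $j \colon \bar{\mathcal{M}} \to \mathcal{N}$ with critical point $\kappa$, yielding the $\mathcal{M}$-normal $\mathcal{M}$-ultrafilter $\mathcal{F}$, the positive sets $\mathcal{S}$, and the continuous chain $\langle \mathcal{M}_\lambda \mid \lambda < \kappa\rangle$ as before. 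The reduction to proving that $\MPB^2_\beta \cap \mathcal{M}$ is $\kappa$-Knaster, and the definitions of $p \restriction \mathcal{M}_\lambda$ and of $\lambda$-compatibility, transfer without change.

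The two technical claims must be re-established in the abstract setting. For the analogue of Claim~\ref{claim: countably names for each element}, the Baumgartner-specific Lemma~\ref{lemma: not adding new branches by baumgartner forcing} is replaced by clause~(3) of suitability: whenever $\lambda$ satisfies $\mathcal{M}_\lambda \cap \kappa = \lambda$, $^{<\lambda}\mathcal{M}_\lambda \subseteq \mathcal{M}_\lambda$ together with the regularity and sub-iteration hypotheses, forcing with the quotient over $(\MPB^2_\alpha \cap \mathcal{M}) \ast (\MPB^1_\alpha \cap \mathcal{M}_\lambda)$ adds no new cofinal branch to $\Phi(\alpha) \cap \mathcal{M}_\lambda$. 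Combined with the $\mu$-chain condition of $\MPB^1_\alpha / (\MPB^1_\alpha \cap \mathcal{M}_\lambda)$, which follows from clause~(2), this shows that every name for such a branch is forced to equal one of fewer than $\mu$ many ground-model names $\{\dot{b}_\eta \mid \eta < \mu\}$. For the analogue of Claim~\ref{separation claim}, one checks that $\kappa \in j(B_\alpha)$ for the set $B_\alpha$ of inaccessibles below $\kappa$ satisfying clauses~(1)--(6), using $j\image\mathcal{M} = \bigcup_{\lambda < \kappa} j(\mathcal{M}_\lambda)$ and the chain conditions exactly as before, and then runs the mutual-genericity argument verbatim: were no separating pair to exist, two mutually generic filters $H_1, H_2$ for $\MPB^2_\alpha / (\MPB^2_\alpha \cap \mathcal{M}_\lambda) \ast (\MPB^1_\alpha \cap \mathcal{M}_\lambda)$ would force $\dot{\tau}^{G \ast H_1} = \dot{\theta}^{G \ast H_2}$, placing this branch in $V[G][H_1] \cap V[G][H_2] = V[G]$, contradicting that it is not added by the inner forcing.

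The iterative construction of the $\lambda$-separating witness is identical except that the decreasing sequences of conditions and the growing antichain $\{\bar{p}_n\}$ in $\MPB^1_\beta$ are now built over $<\mu$ many steps rather than $<\omega_1$; termination follows from clause~(2) (so the antichain has size $<\mu$), and the unions at limit stages below $\mu$ are conditions because each $\MPB^2_\alpha$ is $\mu$-directed closed. With $\lambda$-separating pairs in hand, one concludes as in Lemma~\ref{chain condition lemma of p-2}: extend an arbitrary sequence $\langle p_\lambda \mid \lambda < \kappa\rangle \in \mathcal{M}$ to $\lambda$-separating pairs, stabilize the separating witnesses and the restrictions $p' \restriction \mathcal{M}_\lambda$ on a positive set $D \in \mathcal{S}$ by normality of the weakly compact filter, thin $D$ so that the supports $\supp(p'_\lambda) \cup \supp(p''_\lambda)$ form a $\Delta$-system with root $\Lambda$, and verify by induction on $\gamma < \beta$ that $q(\gamma) = p'_\lambda(\gamma) \cup p''_{\lambda'}(\gamma)$ is a condition for $\lambda < \lambda'$ in $D$.

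The step I expect to be the main obstacle is the successor case $\gamma \in \Lambda$ of this final verification, where one must show $q \restriction \gamma \Vdash \text{``}1_{\MPB^1_\gamma} \Vdash \check{t} \perp_{\Phi(\gamma)} \check{t'}\text{''}$ for $t, t' \in \dom(q(\gamma))$ with $q(\gamma)(t) = q(\gamma)(t')$. For elements above $\lambda$, one uses that the stabilized witness $\langle \bar{p}_n, \xi_n, \theta_n, \tau_n \mid n < \mu\rangle$ forces $\check{\tau}_n$ and $\check{\theta}_n$ (with $\tau_n \neq \theta_n$ on the $\xi_n$-th level) into the distinct branches below $\lambda$ determined by $t$ and $t'$, together with the maximality of $\{\bar{p}_n\}$ in $\MPB^1_\gamma$: any hypothetical $(q', \bar p)$ forcing the two branches to merge could be refined to be compatible with some $\bar{p}_n$, producing a condition forcing $\check{\theta}_n \leq_{\Phi(\gamma)} \check{t} = \check{t'}$ and $\check{\tau}_n \leq_{\Phi(\gamma)} \check{t}$ simultaneously, contradicting $\theta_n \neq \tau_n$. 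It is precisely here that the abstract hypotheses~(2) and~(3) must carry the entire weight formerly borne by the concrete c.c.c.\ and branch-preservation properties of the $\aleph_1$-iteration, and checking that they suffice uniformly over all trees $\Phi(\gamma)$ appearing in the iteration is the delicate point of the argument.
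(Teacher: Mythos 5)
Your proposal is correct and matches the paper's intent exactly: the paper states Lemma \ref{generalized chain condition lemma} without a separate proof, relying on the observation at the start of Section \ref{section:abstract} that the argument for Lemma \ref{chain condition lemma of p-2} uses only the chain condition and branch-preservation properties now packaged as clauses (2) and (3) of $(\mu,\kappa)$-suitability. Your substitutions (fewer than $\mu$ many names in place of countably many, $\mu$-directed closure for the limit stages of the witness construction, and $\mu$-c.c.\ for termination) are precisely the ones the abstraction is designed to support.
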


The following lemma is parallel to Lemma \ref{lemma: not adding new branches by baumgartner forcing mod subiteration}, but for the forcing $\MPB^2_\delta$, in the abstract context.

\begin{lemma}\label{lemma: not adding branches - abstract}
Assume that $\MPB^2_\delta$ is derived from a $(\mu,\kappa)$-suitable triple $(\Phi,\Psi,\delta)$ and $\kappa$ is weakly compact. Let $I \subseteq \delta$ be a set of ordinals such that $\MPB^2_I$ is a sub-iteration. Let also $S$ be a tree of height $\kappa$ in the generic extension by $\MPB^2_I$. Then, $\MPB^2_\delta / \MPB^2_I$ does not add a new cofinal branch to $S$.
\end{lemma}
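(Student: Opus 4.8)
I want to prove Lemma \ref{lemma: not adding branches - abstract}, which asserts that if $\MPB^2_\delta$ comes from a $(\mu,\kappa)$-suitable triple with $\kappa$ weakly compact, $I \subseteq \delta$ is such that $\MPB^2_I$ is a sub-iteration, and $S$ is a tree of height $\kappa$ in the generic extension by $\MPB^2_I$, then the quotient $\MPB^2_\delta / \MPB^2_I$ adds no new cofinal branch to $S$. The strategy is to mimic the argument for Lemma \ref{lemma: not adding new branches by baumgartner forcing mod subiteration}, which reduced the analogous Baumgartner statement to the fact that the base iteration adds no new branches to height-$\omega_1$ trees. Here the role of ``no new branches to $\aleph_1$-trees'' is played by the $\kappa$-Knaster property established in Lemma \ref{generalized chain condition lemma} together with the closure of the iteration; the $\kappa$-Knaster-ness is exactly what prevents an iteration from adding a branch to a $\kappa$-tree of that height.

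\textbf{First step.} First I would recall the standard criterion: a forcing notion of cardinality (or chain condition) bounded so that it is $\kappa$-c.c. cannot add a new cofinal branch to a tree $S$ of height $\kappa$ all of whose levels have size $<\kappa$, provided $\kappa$ is regular. Indeed, if $\dot b$ were a name for a new branch, one could build a full binary tree of conditions each forcing incompatible values of $\dot b$ at increasing levels, and collect an antichain of size $\kappa$, contradicting $\kappa$-c.c. Thus the core task is to argue that $\MPB^2_\delta / \MPB^2_I$ is $\kappa$-c.c. (indeed $\kappa$-Knaster) in the generic extension by $\MPB^2_I$. By Lemma \ref{generalized chain condition lemma}, $\MPB^2_\delta$ itself is $\kappa$-Knaster; I would argue that passing to the quotient by a regular sub-iteration $\MPB^2_I$ preserves $\kappa$-Knaster-ness in $V[G_{\MPB^2_I}]$.

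\textbf{Second step.} To see the quotient is $\kappa$-Knaster, take a sequence $\langle \dot q_i \mid i < \kappa\rangle$ of $\MPB^2_I$-names for conditions in the quotient. For each $i$, fix a condition $r_i \in \MPB^2_I$ deciding enough about $\dot q_i$, and form the ``unreduced'' conditions in the full iteration $\MPB^2_\delta$ obtained by amalgamating $r_i$ with a representative of $\dot q_i$. Apply the $\kappa$-Knaster property of $\MPB^2_\delta$ from Lemma \ref{generalized chain condition lemma} to extract an unbounded $J \subseteq \kappa$ along which these amalgamated conditions are pairwise compatible. Pressing down / using the $\kappa$-c.c. of $\MPB^2_I$, I would find a further unbounded subset on which the reductions to $\MPB^2_I$ agree or are forced compatible by a single condition in the $\MPB^2_I$-generic; the remaining compatibility in the quotient then follows. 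Because $\MPB^2_I$ is a genuine sub-iteration, the support structure splits cleanly, so compatibility of the amalgamated conditions in $\MPB^2_\delta$ projects to compatibility of the quotient conditions over $V[G_{\MPB^2_I}]$.

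\textbf{The main obstacle.} The delicate point is the interplay between the two-step structure inherited from the $\MPB^2 \ast \dot{\MPB}^1$ design and the fact that $\MPB^2_I$ is only required to be a sub-iteration rather than a simple initial segment; the amalgamation of a quotient condition with an $\MPB^2_I$-condition into a single $\MPB^2_\delta$-condition must respect the supports and the specialization constraints at each coordinate. I expect the hardest bookkeeping to be verifying that the reduction and amalgamation maps commute with the limit stages (direct limits at cofinality $\geq \mu$, inverse limits below), so that a $\kappa$-sized pairwise-compatible family upstairs really does descend to a pairwise-compatible family in the quotient. Once the quotient is shown $\kappa$-Knaster in $V[G_{\MPB^2_I}]$, the no-new-branches conclusion is immediate from the first step, since $S$ has height $\kappa$ with levels of size $<\kappa$ and $\kappa$ remains regular throughout.
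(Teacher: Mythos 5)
There is a genuine gap, and it occurs at both ends of your argument. First, the claim in your opening step --- that a $\kappa$-c.c.\ forcing cannot add a cofinal branch to a tree of height $\kappa$ with small levels --- is false: a $\kappa$-Suslin tree is a $\kappa$-c.c.\ forcing that adds a branch to itself. Your ``binary tree of conditions'' sketch does not produce a $\kappa$-sized antichain, since without closure the construction terminates after finitely many splittings. The correct branch lemma, and the one the paper actually invokes (\cite[Lemma 1.3]{Unger2015}), requires that the \emph{square} $\MRB \times \MRB$ of the forcing be $\kappa$-c.c.; this is what $\kappa$-Knasterness would buy you, so your parenthetical ``(indeed $\kappa$-Knaster)'' is the load-bearing part of the step, not the $\kappa$-c.c.\ claim you justify.

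Second, and more seriously, your route to Knasterness of the quotient does not go through. From pairwise compatibility in $\MPB^2_\delta$ of the amalgamated conditions $r_i{}^\frown \dot q_i$ you only obtain, for each pair $i,j$, \emph{some} condition of $\MPB^2_I$ forcing $\dot q_i$ and $\dot q_j$ to be compatible in the quotient; you need a single condition in the actual generic $G_{\MPB^2_I}$ doing this for a $\kappa$-sized subfamily simultaneously, and Knasterness does not in general descend to quotients in this way. The ``pressing down / $\kappa$-c.c.\ of $\MPB^2_I$'' step is exactly where this is hidden and it is not justified. The paper avoids the issue entirely: it observes that $\MPB^2_I \ast \bigl(\left(\MPB^2_\delta / \MPB^2_I\right) \times \left(\MPB^2_\delta / \MPB^2_I\right)\bigr)$ is itself forcing equivalent to an iteration $\MPB^2_\gamma$ arising from a modified suitable triple (by re-listing the coordinates outside $I$ twice via new $\Phi,\Psi$), hence is $\kappa$-c.c.\ by Lemma \ref{generalized chain condition lemma}; since $\MPB^2_I$ is $\kappa$-c.c., the square of the quotient is forced to be $\kappa$-c.c., and Unger's lemma then gives the conclusion directly, with no need for levels of $S$ to have size $<\kappa$ (a hypothesis the lemma does not assume but your last sentence uses). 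To repair your proof you should replace both steps by this product argument.
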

\begin{proof}
Similarly to the proof of Lemma \ref{lemma: not adding new branches by baumgartner forcing mod subiteration}, we claim that the forcing \[\MPB^2_I  \ast \left(\left(\MPB^2_\delta / \MPB^2_I \right) \times \left(\MPB^2_\delta / \MPB^2_I \right)\right)\] is forcing equivalent to $\tilde{\MPB}^2_\gamma$, for some ordinal $\gamma$, where $\tilde{\MPB}^2$ is obtain by following the definition of $\MPB^2$ and modifying $\Phi$ and $\Psi$ (by inductively assuming the validity of the claim for initial segments of $I$, as in Lemma \ref{lemma: not adding new branches by baumgartner forcing mod subiteration}). Thus, it is $\kappa$-c.c. In particular, the forcing $\left(\MPB^2_\delta / \MPB^2_I \right) \times \left(\MPB^2_\delta / \MPB^2_I \right)$ is forced to be $\kappa$-c.c and thus by \cite{Unger2012},
$\MPB^2_\delta / \MPB^2_I$ does not add cofinal branches to a tree of height $\kappa$.
\end{proof}
In the next sections we will use the mechanism of this section in order to specialize trees at many cardinals simultaneously. Thus, we will need to verify that when using $\Psi$ to guess forcing notions that specialize trees, the rest of the iteration does not destroy their chain condition. 

\begin{lemma}\label{lemma: closed not adding branches}
Let $\mu < \kappa$ be regular cardinals and let $(\Phi,\Psi,\delta)$ be $(\mu,\kappa)$-suitable. Let $I\subseteq \delta$ be a set of ordinals such that $\MPB^2_I$ is a sub-iteration of $\MPB^2 = \MPB^2_\delta$. Let $S$ be a $\mu$-Aronszajn tree which is introduced by a $\mu$-c.c.\ forcing notion $\mathbb{R}$ in the generic extension by $\MPB^2_I$. Then $\MPB^2 / \MPB^2_I$ does not introduce new branches to $S$.
\end{lemma}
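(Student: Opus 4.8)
The plan is to reduce the statement to the branch-preservation principle of Unger already invoked in the proof of Lemma~\ref{chain condition of P-1}: a $<\mu$-closed forcing adds no new cofinal branch to a tree of height $\mu$ that was introduced by a $\mu$-c.c.\ forcing. There are exactly two ingredients to assemble. First, that the quotient $\MPB^2_\delta / \MPB^2_I$ is $<\mu$-closed over $V[G_{\MPB^2_I}]$; second, that $S$ genuinely arises from a $\mu$-c.c.\ forcing, which is given by hypothesis. This is the higher-$\mu$ analogue of the argument in Lemma~\ref{chain condition of P-1}, where the roles were played by the $\aleph_1$-closed tail quotient $\MPB^2_\gamma/\MPB^2_\alpha$ and the c.c.c.\ forcing $\MPB^1_\beta$ introducing the $\aleph_1$-tree. (Short, non-cofinal branches are a non-issue, since a $<\mu$-closed forcing is $\mu$-distributive and hence adds no new sequences of length $<\mu$; so the only thing to rule out is a new \emph{cofinal} branch.)

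First I would verify the closure of the quotient. Since $\MPB^2_I$ is a regular sub-iteration of $\MPB^2_\delta$, the quotient $\MPB^2_\delta / \MPB^2_I$ is, over $V[G_{\MPB^2_I}]$, (equivalent to) an iteration with $<\mu$-supports whose iterands are the $\mu$-directed closed forcings $\dot{\MQB}^2_\gamma$ at the coordinates $\gamma \in \delta \setminus I$, the coordinates in $I$ having already been decided by $G_{\MPB^2_I}$. Given a directed family of fewer than $\mu$ conditions in the quotient, I would take its coordinatewise union: its support has size $<\mu$, and at each coordinate the relevant iterand is $\mu$-directed closed, so the union is a condition of $\MPB^2_\delta$; moreover its restriction to $I$ is a directed family of fewer than $\mu$ conditions lying below elements of $G_{\MPB^2_I}$, hence, since $\MPB^2_I$ is itself $\mu$-directed closed, it has a lower bound inside $G_{\MPB^2_I}$, so the union is in fact a condition of the quotient below the whole family. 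Thus $\MPB^2_\delta/\MPB^2_I$ is $\mu$-directed closed, in particular $<\mu$-closed, in $V[G_{\MPB^2_I}]$.

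With this in hand, let $W$ be a model with $V[G_{\MPB^2_I}] = W[H]$, where $H$ is $\mathbb{R}$-generic over $W$, $\mathbb{R} \in W$ is $\mu$-c.c.\ in $W$, and $S \in W[H]$. Working over $W$, the forcing $\mathbb{R}$ is $\mu$-c.c., $S$ is a tree of height $\mu$ in the $\mathbb{R}$-extension $W[H]$, and $\MPB^2_\delta/\MPB^2_I$ is $<\mu$-closed over $W[H]$. These are precisely the hypotheses of the branch-preservation lemma \cite[Lemma~3.2]{Unger2012}, applied with ground model $W$; it yields that forcing with $\MPB^2_\delta/\MPB^2_I$ over $W[H]=V[G_{\MPB^2_I}]$ adds no new cofinal branch to $S$, which is exactly the assertion of the lemma.

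The step I expect to require the most care is the closure of the quotient: unlike the tail quotients $\MPB^2_\gamma/\MPB^2_\alpha$ of Lemma~\ref{chain condition of P-1}, here $I$ is an arbitrary subset of $\delta$, so one must appeal to the sub-iteration apparatus (and to the fact, noted earlier, that $\MPB^2_I$ is always a regular subforcing) to see that the quotient still decomposes as a $<\mu$-support iteration of $\mu$-directed closed iterands, and to keep the $I$-coordinates of an amalgamated condition inside the generic $G_{\MPB^2_I}$. A secondary point is to confirm the hypotheses of \cite[Lemma~3.2]{Unger2012} in the exact form needed, namely that it suffices for the closed forcing to be $<\mu$-closed in the $\mathbb{R}$-extension $W[H]$ rather than already in $W$; this flexibility is what the cited lemma provides, and it is precisely what makes ``$S$ is introduced by the $\mu$-c.c.\ forcing $\mathbb{R}$'' the essential hypothesis.
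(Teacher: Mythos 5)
Your proposal is correct and follows essentially the same route as the paper: the paper's proof simply observes that $\MPB^2_\delta/\MPB^2_I$ is $\mu$-closed over the extension by $\MPB^2_I$ and then cites \cite[Lemma 3.2]{Unger2012}, exactly the two ingredients you assemble. Your additional verification that the quotient is $\mu$-directed closed (via coordinatewise unions and the regularity of $\MPB^2_I$ as a sub-iteration) is detail the paper leaves implicit, not a different argument.
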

\begin{proof}
Note that $\MPB^2 / \MPB^2_I$ is $\mu$-closed in the generic extension by $\MPB^2_I$. Thus, by a standard argument it cannot add a cofinal branch to $S$. For the completeness of the paper, let us sketch the argument. Let $\dot{S}$ be an $\mathbb{R}$-name for an Aronszajn tree over $\MPB^2_I$. Let us assume that the quotient map $\MPB^2 / \MPB^2_I$ adds a cofinal branch, and let $\dot{b}$ be a name for this branch. 

Let us define by induction a decreasing sequence of conditions $q_i \in \MPB^2 / \MPB^2_I$ such that any condition in $\mathbb{R}$ forces that $q_i$ decides the value of the $\dot{b}$ at level $i$ (in the generic extension by $\mathbb{R}$). This is done using the chain condition of $\mathbb{R}$ and the closure of the quotient forcing $\MPB^2 / \MPB^2_I$. Thus, in the generic extension by $\mathbb{R}$ one can use the decreasing sequence $\langle q_i \mid i < \mu\rangle$ and construct a cofinal branch in $S$.
\end{proof}
We will use this lemma inductively in order to justify the preservation of the chain condition of the specialization forcings in generic extensions.
\section{The Special Aronszajn Tree Property at \texorpdfstring{$\omega$}{omega}-many successive cardinals}
\label{section3}
In this section we prove Theorem \ref{main theorem3}. The proof is based on a modification of the proof of Theorem
\ref{main theorem2}, using the abstract approach as described in Section \ref{section:abstract}, where instead of considering two successive cardinals we consider $\omega$-many of them.

Thus let $\langle \kappa_n \mid n < \omega\rangle$ be an increasing sequence of
supercompact cardinals, $\delta=(\sup_{n<\omega} \kappa_n)^{++}$
and let $\mu < \kappa_0$ be a regular cardinal\footnote{For the proof of Theorem \ref{main theorem3} it suffices to take $\mu=\aleph_0$, but here we will prove
a stronger statement that will be used in the next section for the proof of Theorem \ref{main theorem}.}.

Let us recall Laver's supercompactness indestructibility lemma, in the form that will be used in this paper.
\begin{lemma}[Laver, \cite{laver}]
\label{extended laver forcing} Let $\eta$ be a regular cardinal and $\langle \kappa_n \mid n<\omega  \rangle$ be an increasing sequence of supercompact
cardinals above $\eta$. Then there exists an $\eta$-directed closed forcing notion $\mathbb{L}(\eta, \langle \kappa_n \mid n<\omega  \rangle)$
which makes the supercompactness of each $\kappa_n$ indestructible under $\kappa_n$-directed closed forcing notions. 
\end{lemma}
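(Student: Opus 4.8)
The plan is to realize $\mathbb{L}=\mathbb{L}(\eta,\langle \kappa_n \mid n<\omega\rangle)$ as a full-support iteration $\mathbb{L}_0 \ast \dot{\mathbb{L}}_1 \ast \dot{\mathbb{L}}_2 \ast \cdots$ of length $\omega$, where each $\dot{\mathbb{L}}_n$ is a name for the Laver preparation of $\kappa_n$, arranged so that $\mathbb{L}_n$ fires only at stages in the interval $[\kappa_{n-1},\kappa_n)$ (setting $\kappa_{-1}:=\eta$). Thus $\mathbb{L}_0$ is the Laver preparation of $\kappa_0$ that is trivial below $\eta$, hence $\eta$-directed closed, and for $n\geq 1$ the poset $\mathbb{L}_n$, being a reverse Easton iteration that fires only above $\kappa_{n-1}$, is $\kappa_{n-1}$-directed closed. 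Since $\mathbb{L}_0$ is $\eta$-directed closed and every later factor is even more closed, the full-support limit $\mathbb{L}$ is $\eta$-directed closed, which gives the first assertion.

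First I would check that each $\dot{\mathbb{L}}_n$ makes sense and that the preparation of $\kappa_n$ can still be performed over the appropriate intermediate model. Writing $\mathbb{L}_{<n}=\mathbb{L}_0\ast\cdots\ast\dot{\mathbb{L}}_{n-1}$, this is a forcing of size $\kappa_{n-1}<\kappa_m$ for every $m\geq n$, so by the Levy--Solovay theorem each such $\kappa_m$ remains supercompact in $V[G_{\mathbb{L}_{<n}}]$. In particular $\kappa_n$ is supercompact there, and applying Laver's theorem for a single supercompact cardinal (in the form in which the preparation is required to be trivial below $\kappa_{n-1}$, and is therefore $\kappa_{n-1}$-directed closed) inside $V[G_{\mathbb{L}_{<n}}]$, the poset $\mathbb{L}_n$ exists and forces the supercompactness of $\kappa_n$ to be indestructible under $\kappa_n$-directed closed forcing. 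Setting $\mathbb{L}_{\leq n}=\mathbb{L}_{<n}\ast\dot{\mathbb{L}}_n$, the tail $\mathbb{L}^{>n}=\dot{\mathbb{L}}_{n+1}\ast\dot{\mathbb{L}}_{n+2}\ast\cdots$ is $\kappa_n$-directed closed in $V[G_{\mathbb{L}_{\leq n}}]$.

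The heart of the argument is to transfer the indestructibility of each $\kappa_n$ from the intermediate model $V[G_{\mathbb{L}_{\leq n}}]$ to the final model $V[G_{\mathbb{L}}]$. Fix $n$ and factor $\mathbb{L}=\mathbb{L}_{\leq n}\ast\dot{\mathbb{L}}^{>n}$, so that $V[G_{\mathbb{L}}]=V[G_{\mathbb{L}_{\leq n}}][H]$ with $H$ generic for the $\kappa_n$-directed closed poset $\mathbb{L}^{>n}$. Let $\mathbb{Q}\in V[G_{\mathbb{L}}]$ be any $\kappa_n$-directed closed forcing. Then $\mathbb{L}^{>n}\ast\dot{\mathbb{Q}}$ is $\kappa_n$-directed closed as computed in $V[G_{\mathbb{L}_{\leq n}}]$, so by the indestructibility of $\kappa_n$ in that model, $\kappa_n$ is supercompact in $V[G_{\mathbb{L}_{\leq n}}][H\ast\mathbb{Q}]=V[G_{\mathbb{L}}][\mathbb{Q}]$. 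As $\mathbb{Q}$ was arbitrary, $\kappa_n$ is indestructibly supercompact in $V[G_{\mathbb{L}}]$, as required.

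Finally, the $\GCH$ clause follows from the standard bookkeeping for reverse Easton iterations: each $\mathbb{L}_n$ can be taken to consist of factors of size $<\kappa_n$ and to satisfy the $\kappa_n^+$-chain condition, so, starting from $2^{\kappa_n}=\kappa_n^+$, a nice-name count bounds the number of subsets of $\kappa_n$ in the extension by $\kappa_n^+$, while the later factors are $\kappa_n^+$-closed and add none; hence $2^{\kappa_n}=\kappa_n^+$ persists into $V[G_{\mathbb{L}}]$. The main obstacle is the simultaneity: one must balance the two preservation mechanisms against each other, since small forcing (Levy--Solovay) keeps the \emph{later} cardinals supercompact so that their preparations can be carried out, whereas the high directed-closure of the \emph{later} factors keeps the indestructibility of the \emph{earlier} cardinals intact all the way up to the final model. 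Checking that both demands are met by a single $\eta$-directed closed set forcing, with the correct closure at the limit stage $\omega$, is the crux; everything else reduces to the single-cardinal theorem of Laver.
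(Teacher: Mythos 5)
The paper offers no proof of this lemma; it is quoted as a known result of Laver, so the only basis for comparison is the standard argument, and your reconstruction follows exactly the expected route: an $\omega$-length full-support iteration of Laver preparations, Levy--Solovay to keep $\kappa_n$ supercompact after the small initial segment $\mathbb{L}_{<n}$, and the absorption trick $\mathbb{L}^{>n}\ast\dot{\mathbb{Q}}$ being $\kappa_n$-directed closed to push indestructibility from $V[G_{\mathbb{L}_{\leq n}}]$ up to the final model. Those three steps are correct as you state them.

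The one genuine gap is in the $2^{\kappa_n}=\kappa_n^+$ clause. You set up $\mathbb{L}_{n+1}$ to fire on the interval $[\kappa_n,\kappa_{n+1})$, which makes it $\kappa_n$-directed closed, but then you assert that ``the later factors are $\kappa_n^+$-closed and add none.'' That does not follow: $\kappa_n$-directed closure does not prevent adding subsets of $\kappa_n$. Concretely, the stage-$\kappa_n$ forcing of the preparation $\mathbb{L}_{n+1}$ is only required to be $\kappa_n$-directed closed, so the Laver function could hand you something like $\Add(\kappa_n,\kappa_{n+1})$ there, which would force $2^{\kappa_n}\geq\kappa_{n+1}$ and destroy the clause you are trying to preserve. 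The repair is cheap but must be said: arrange $\mathbb{L}_{n+1}$ to be trivial below $\kappa_n^+$ (fire only at stages strictly above $\kappa_n$, e.g.\ on $[\kappa_n^+,\kappa_{n+1})$ or only at inaccessibles in that interval), so that the entire tail $\mathbb{L}^{>n}$ is $\kappa_n^+$-directed closed and genuinely adds no new subsets of $\kappa_n$; the single-cardinal Laver theorem tolerates triviality below any prescribed cardinal below $\kappa_{n+1}$, so indestructibility is unaffected. With that adjustment, the nice-name count over the $\kappa_n$-c.c., size-$\kappa_n$ poset $\mathbb{L}_{\leq n}$ (using $2^{<\kappa_n}=\kappa_n$ from inaccessibility) gives $2^{\kappa_n}=\kappa_n^+$ in the final model, and the rest of your argument stands.
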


By the above lemma, we may also assume that for each $n$, $\kappa_n$ is  indestructible under $\kappa_n$-directed closed forcing notions.
For notational reasons, it is convenient to denote $\kappa_{-1} = \mu$.

For each regular cardinal $\eta < \delta$, set $S^\delta_\eta = \{\a < \delta \mid \cf(\a)=\eta\}$.
Let also $\Phi: \delta \to H(\delta)$ be such that for each $x \in H(\delta)$ and $n<\omega$,~ $\Phi^{-1}(x) \cap S^\delta_{\kappa_n^+}$
is unbounded in $\delta$.

We define
an iteration
 \[
\MPB_\delta= \langle \langle \mathbb{P}_\alpha \mid \alpha \leq \delta, \rangle, \langle  \dot{\mathbb{Q}}_\a \mid \a < \delta \rangle\rangle
 \]
 of length $\delta$ as follows. During the iteration, we also define the auxiliary forcing notions $\mathbb{P}_\alpha(< \kappa_n), \mathbb{P}_\alpha(\kappa_n)$
  and $\mathbb{P}_\alpha(> \kappa_n)$, for $n<\omega, \a \leq \delta$ in such a way that
  \[
  \MPB_\a \cong \MPB_\a(> \kappa_n) \ast \dot{\MPB}_\a(\kappa_n) \ast \dot{\MPB}_\a(< \kappa_n),
  \]
where
\begin{enumerate}
\item [(a)] $\MPB_\alpha(>\kappa_n)$ is $\kappa_n$-directed closed.
\item [(b)] $\Vdash_{\MPB_\alpha(>\kappa_n)}$``$\dot{\MPB}_\alpha(\kappa_n)$ is  $\kappa_n$-c.c. and $\kappa_{n-1}$-directed closed''.
\item [(c)] $\Vdash_{\MPB_\alpha(>\kappa_n) \ast \dot{\MPB}_\a(\kappa_n)}$``$\dot{\MPB}_\alpha(<\kappa_n)$ is  $\kappa_{n-1}$-c.c. and $\mu$-directed closed''.
\end{enumerate}


 Set $\MQB_0=\prod_{n < \omega} \Col(\kappa_{n-1}, < \kappa_n)$ be the full-support product of the forcing notions $\Col(\kappa_{n-1}, < \kappa_n), n < \omega$. Let also
 \begin{enumerate}
 \item $\mathbb{P}_1( <\kappa_n) = \prod_{m < n} \Col(\kappa_{m-1}, < \kappa_m)$.
\item $\mathbb{P}_1( \kappa_n) = \Col(\kappa_{n-1}, < \kappa_n)$.
\item $\mathbb{P}_1( >\kappa_n) = \prod_{m > n} \Col(\kappa_{m-1}, < \kappa_m)$.
\end{enumerate}
Now suppose that $\alpha \leq \delta$, and that we have defined the forcing notions $\MPB_\beta$ and $\mathbb{P}_\beta(< \kappa_n), \mathbb{P}_\beta(\kappa_n), \mathbb{P}_\beta(> \kappa_n)$ for $n<\omega$ and $\beta < \alpha$. We define $\MPB_\a$,
$\mathbb{P}_\alpha(< \kappa_n), \mathbb{P}_\alpha(\kappa_n)$
  and $\mathbb{P}_\alpha(> \kappa_n)$ as follows.
A condition $p$ is in $\MPB_\a$ if and only if
\begin{enumerate}
\item $p$ has domain $\alpha$ and $\supp(p) \subseteq \bigcup_{n<\omega}S^\delta_{\kappa_n^+}$, where $\supp(p)$ denotes the support of $p$.
\item  For each $n<\omega$, $|\supp(p) \cap S^\delta_{\kappa_n^+}| < \kappa_{n-1}$.
\item If $\beta \in \supp(p) \cap S^\delta_{\kappa_n^+}$ and $\Phi(\beta)$ is a $\MPB_{\beta}( > \kappa_n) \ast \dot{\MPB}_\beta(\kappa_n) \ast \dot{\MPB}_\beta(< \kappa_n)$-name for a $\kappa_n$-Aronszajn tree, then it is forced by $\MPB_{\beta}( > \kappa_n) \ast \MPB_{\beta}(\kappa_n)$ that
     $\dot\MQB_\beta$ consists of those   partial functions $f: \Phi(\beta) \to \kappa_{n-1}$ with domain of size $<\kappa_{n-1}$, such that for every $t, s\in \dom(f)$ with $f(t) = f(s)$, we have \[1_{\MPB_\beta(<\kappa_n)}\Vdash_{\MPB_\beta(<\kappa_n)} \check t \perp_{\Phi(\beta)} \check s.\] Otherwise $\dot\MQB_\beta$
     is forced to be the trivial forcing notion.
\end{enumerate}
For $n<\omega$, $\MPB_\a(> \kappa_n)$ is defined as
\[
\MPB_\a(> \kappa_n) =\{p \in \MPB_\a \mid \supp(p) \subseteq    \bigcup_{m > n}S^\delta_{\kappa_m^+}             \}.
\]
It is
then clear that $\MPB_\a(> \kappa_n)$ is a regular subforcing of $\mathbb{P}_\alpha$.  Working in $\MPB_\a(> \kappa_n)$, the forcing notion
$\MPB_\a(\kappa_n)$ is defined as
\[
\MPB_\a(\kappa_n) =\{p \in \MPB_\a \mid \supp(p) \subseteq    S^\delta_{\kappa_n^+}, \text{compatible with } \dot{G}_{\MPB_\a(>\kappa_{n})}\}.
\]
Finally, the forcing notion $\MPB_\a(< \kappa_n)$ is defined in the generic extension by the forcing $\MPB_\a(>\kappa_n) \ast \dot\MPB_\a(\kappa_n)$ by
\[
\MPB_\a(< \kappa_n) =\{p \in \MPB_\a \mid \supp(p) \subseteq \bigcup_{m < n}S^\delta_{\kappa_m^+},\,\text{compatible with } \dot{G}_{\MPB_\a(>\kappa_{n-1})}  \}.
\]
Note that the map
\[
p \mapsto (p \upharpoonright  \bigcup_{m > n}S^\delta_{\kappa_m^+}, p \upharpoonright  S^\delta_{\kappa_n^+}, p \upharpoonright \bigcup_{m < n}S^\delta_{\kappa_m^+})
\]
defines a dense embedding from $\MPB_\a$ to $\MPB_\a(> \kappa_n) \ast \dot{\MPB}_\a(\kappa_n) \ast \dot{\MPB}_\a(< \kappa_n)$ and hence
$ \MPB_\a \cong \MPB_\a(> \kappa_n) \ast \dot{\MPB}_\a(\kappa_n) \ast \dot{\MPB}_\a(< \kappa_n)$.

Let us argue that
clauses $($a$)$-$($c$)$ continue to hold at $\alpha$. Clause $($a$)$ is evident. Clauses $($b$)$ and $($c$)$ follow from the next lemma.
\begin{lemma}
\label{chain condition for P-alpha-kappa-n}
Work in the generic extension $V[G_{\MPB_\a(> \kappa_n)}]$ by $\MPB_\a(> \kappa_n)$. Then  $\MPB_\a(\kappa_n)$
is $\kappa_{n-1}$-directed closed and $\kappa_{n}$-c.c. and $\Vdash_{\MPB_\a(\kappa_n)}$`` $\MPB_\a(<\kappa_n)$ is $\mu$-closed and $\kappa_{n-1}$-c.c.''
\end{lemma}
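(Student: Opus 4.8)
The plan is to prove the four assertions by induction on $n<\omega$, working throughout in the model $V[G_{\MPB_\a(>\kappa_n)}]$. Since $\MPB_\a(>\kappa_n)$ is $\kappa_n$-directed closed (clause (a) of the construction), it adds no new $<\kappa_n$-sequences, so the structure $H(\kappa_n)$ and the forcings $\MPB_\a(\kappa_n)$, $\MPB_\a(<\kappa_n)$ are computed as in $V$; moreover, by Laver indestructibility (Lemma \ref{extended laver forcing}), $\kappa_n$ remains weakly compact in this model, which is what the abstract chain-condition machinery requires. Within a fixed level $n$ I will establish the two closure statements first, then the $\kappa_{n-1}$-c.c.\ of $\MPB_\a(<\kappa_n)$, and finally the $\kappa_n$-c.c.\ of $\MPB_\a(\kappa_n)$; the last two steps invoke the inductive hypothesis at level $n-1$.

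The two closure assertions are direct and do not use the induction. For the $\kappa_{n-1}$-directed closure of $\MPB_\a(\kappa_n)$, given a directed family $D$ of size $<\kappa_{n-1}$ I form the coordinatewise union $p^\ast=\bigcup D$. Each coordinate forcing $\dot\MQB_\beta$ with $\beta\in S^\delta_{\kappa_n^+}$ consists of partial functions into $\kappa_{n-1}$ of domain size $<\kappa_{n-1}$ whose specialization constraint ``$\Vdash_{\MPB_\beta(<\kappa_n)}t\perp_{\Phi(\beta)}s$'' does not depend on the function itself, so the union of $<\kappa_{n-1}$ pairwise-compatible such functions is again a condition; since $\kappa_{n-1}$ is regular, $\supp(p^\ast)$ still has size $<\kappa_{n-1}$, so $p^\ast\in\MPB_\a(\kappa_n)$ is a lower bound. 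The $\mu$-closure of $\MPB_\a(<\kappa_n)$ is entirely analogous: every coordinate occurring in $\MPB_\a(<\kappa_n)$ is at least $\mu=\kappa_{-1}$-directed closed and the support bounds survive $<\mu$-unions because $\mu$ is regular. This last computation takes place over $V[G_{\MPB_\a(>\kappa_n)\ast\MPB_\a(\kappa_n)}]$, which is harmless since that two-step iteration is $\kappa_{n-1}$-directed closed and hence adds no new $<\mu$-sequences.

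For the $\kappa_{n-1}$-c.c.\ of $\MPB_\a(<\kappa_n)$ I use the identity $\MPB_\a(>\kappa_n)\ast\MPB_\a(\kappa_n)=\MPB_\a(>\kappa_{n-1})$ together with the decomposition $\MPB_\a(<\kappa_n)\cong\MPB_\a(\kappa_{n-1})\ast\MPB_\a(<\kappa_{n-1})$ over $V[G_{\MPB_\a(>\kappa_{n-1})}]$. By the inductive hypothesis at level $n-1$, the first factor is $\kappa_{n-1}$-c.c.\ and the second is (forced to be) $\kappa_{n-2}$-c.c., hence $\kappa_{n-1}$-c.c.; and since $\kappa_{n-1}$ is regular, a two-step iteration of a $\kappa_{n-1}$-c.c.\ forcing with a forced $\kappa_{n-1}$-c.c.\ forcing is again $\kappa_{n-1}$-c.c. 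The base case $n=0$ is trivial, as $\MPB_\a(<\kappa_0)$ is the empty iteration. To obtain the $\kappa_n$-c.c.\ of $\MPB_\a(\kappa_n)$, I view $\MPB_\a(\kappa_n)\ast\MPB_\a(<\kappa_n)$, in $V[G_{\MPB_\a(>\kappa_n)}]$, as an instance of the abstract template of Section \ref{section:abstract} with $(\mu,\kappa)\mapsto(\kappa_{n-1},\kappa_n)$, with $\MPB_\a(\kappa_n)$ in the role of $\MPB^2$ (it begins with $\Col(\kappa_{n-1},<\kappa_n)$) and $\MPB_\a(<\kappa_n)$ in the role of $\MPB^1$; this requires only reading off the level-$n$ data as a suitable triple. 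It then suffices to verify $(\kappa_{n-1},\kappa_n)$-suitability and to apply Lemma \ref{generalized chain condition lemma}, which in fact yields $\kappa_n$-Knasterness. Of the suitability clauses, clause (1) is immediate and clause (2) is exactly the $\kappa_{n-1}$-c.c.\ just established, leaving only the no-new-branches requirement of clause (3).

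That last requirement is the main obstacle: for the relevant inaccessibles $\lambda$ and submodels $\mathcal{M}_\lambda\subseteq\mathcal{M}$, forcing with the quotient of $\MPB_\a(<\kappa_n)$ over its restriction to $\mathcal{M}_\lambda$ must add no cofinal branch to the $\lambda$-Aronszajn tree $\Phi(\a)\cap\mathcal{M}_\lambda$. This is the analogue of Claim \ref{claim: countably names for each element}, and I would discharge it by the inductive use of the no-branch lemmas \ref{lemma: not adding branches - abstract} and \ref{lemma: closed not adding branches} at the lower levels $m<n$: once $\MPB_\a(<\kappa_n)\cap\mathcal{M}_\lambda$ has been forced, $\lambda$ acquires a small cofinality, and the complementary tail is again a specialization iteration of the same abstract form below $\kappa_n$, each of whose factors either is sufficiently closed — so that Lemma \ref{lemma: closed not adding branches} forbids new branches to a tree introduced by a lower-c.c.\ forcing — or is the complement of a sub-iteration to which Lemma \ref{lemma: not adding branches - abstract} applies. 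Combining these with the $\kappa_n$-directed closure of $\MPB_\a(>\kappa_n)$, which preserves the relevant tree-properties through all intermediate models, yields suitability and closes the induction via Lemma \ref{generalized chain condition lemma}.
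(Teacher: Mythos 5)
Your proposal is correct and follows essentially the same route as the paper: the closure claims are handled directly, $\MPB_\a(<\kappa_n)$ is decomposed into the specialization iterations at the levels $m<n$ whose chain conditions are obtained inductively, and the $\kappa_n$-c.c.\ of $\MPB_\a(\kappa_n)$ comes from Lemma \ref{generalized chain condition lemma} applied with $(\mu,\kappa)=(\kappa_{n-1},\kappa_n)$, using Laver indestructibility to keep $\kappa_n$ weakly compact and the no-new-branch lemmas to keep the guessed names Aronszajn. The only cosmetic difference is that you run a global induction on $n$ via the identity $\MPB_\a(>\kappa_n)\ast\MPB_\a(\kappa_n)\cong\MPB_\a(>\kappa_{n-1})$, whereas the paper performs the corresponding inner induction on $m<n$ inside the proof.
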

\begin{proof}
Work in $V[G_{\MPB_\a(> \kappa_n)}]$. It is clear that $\MPB_\a(\kappa_n)$
is $\kappa_{n-1}$-directed closed and $\Vdash_{\MPB_\a(\kappa_n)}$`` $\MPB_\a(<\kappa_n)$ is $\mu$-closed''.

As the forcing notion $\MPB_\a(> \kappa_n)$ is $\kappa_n$-directed closed, the cardinals $\kappa_m, m \leq n$,
remain supercompact in  $V[G_{\MPB_\a(> \kappa_n)}]$. Suppose also $G_{\MPB_\a(\kappa_n)}$ is $\MPB_\a(\kappa_n)$-generic over $V[G_{\MPB_\a(> \kappa_n)}]$.

Working in  $V[G_{\MPB_\a(> \kappa_n)}][G_{\MPB_\a(\kappa_n)}]$, each $\kappa_m, m < n$,
remains supercompact, and the forcing notion $\MPB_\a(<\kappa_n)$ can be seen as a finite iteration
\[
\MPB_\a(<\kappa_n) \cong \MPB_\a(\kappa_{n-1}) \ast \dots \ast \dot\MPB_\a(\kappa_0),
\]
where for each $m<n$,
\begin{enumerate}
\item $\MPB_\a(\kappa_{n-1}) \ast \dots \ast \dot\MPB_\a(\kappa_{m+1})$ is $\kappa_m$-directed closed;
\item It is forced by $\MPB_\a(\kappa_{n-1}) \ast \dots \ast \dot\MPB_\a(\kappa_{m+1})$ that the forcing notion $\MPB_\a(\kappa_m)$
specializes $\MPB_\a(\kappa_{m-1}) \ast \dots \ast \dot\MPB_\a(\kappa_{0})$-names of $\kappa_m$-Aronszajn trees.
\end{enumerate}
By (1),  $\kappa_m$ remains supercompact and hence weakly compact in the generic extension by $\MPB_\a(\kappa_{n-1}) \ast \dots  \dot\MPB_\a(\kappa_{m+1})$,
so using Lemma \ref{generalized chain condition lemma} and by induction on $m < n$,
\begin{center}
$\Vdash_{\MPB_\a(\kappa_{n-1}) \ast \dots  \dot\MPB_\a(\kappa_{m+1})}$`` $\MPB_\a(\kappa_m)$ is $\kappa_{m-1}$-directed closed and $\kappa_m$-c.c.''.
\end{center}
In particular $\Vdash_{\MPB_\a(\kappa_n)}$`` $\MPB_\a(<\kappa_n)$ is $\kappa_{n-1}$-c.c.''.

Note that in order to apply Lemma \ref{generalized chain condition lemma}, we had to make sure that whenever some name for a $\kappa_n$-Aronszajn tree is chosen in step $\gamma < \alpha$, then it is going to remain Aronszajn after forcing with $\MPB_\alpha(>\kappa_n) / \MPB_\gamma(>\kappa_n)$. This is true by the arguments of Lemma \ref{lemma: closed not adding branches}, working inductively to show that the chain condition requirements hold. 

As $\MPB_\a(> \kappa_n)$ is $\kappa_n$-directed closed, $\kappa_n$ remains supercompact and hence weakly compact in $V[G_{\MPB_\a(> \kappa_n)}]$.
  So again by Lemma \ref{generalized chain condition lemma} the forcing notion  $\MPB_\a(\kappa_n)$
is $\kappa_{n}$-c.c.
\end{proof}

 Let
 \[
 \langle \langle  G_\alpha \mid \alpha \leq \delta      \rangle, \langle  H_\alpha \mid \alpha < \delta     \rangle\rangle
 \]
 be $\MPB_\delta$-generic over $V$. Thus for each $\alpha \leq \delta, G_\alpha$ is $\MPB_\alpha$-generic over $V$,
 and if $\alpha < \delta$, then $H_\alpha$ is $\dot{\MQB}_\a[G_\a]$-generic over $V[G_\a]$.

 It is clear that
 \begin{lemma} \label{first step extension}
 $\MPB^1$ forces $\forall n<\omega, \kappa_n=\mu^{+n+1}\text{ and }2^{\kappa_n}=\kappa_{n}^+=\kappa_{n+1}$, and in particular it forces that for all $n>0$, there are special $\kappa_n$-Aronszajn trees.
\end{lemma}
\begin{proof}
We have $\MPB_1 \cong \MQB_0 = \prod_{n < \omega} \Col(\kappa_{n-1}, < \kappa_n)$, thus the first statement 
follows immediately. The second statement follows from the first one by Specker's theorem \cite{specker}.
\end{proof}

The next lemma can be proved easily using a $\Delta$-System argument.
\begin{lemma} \label{chain conditions lemma for P-delta}
For every $\alpha \leq \delta$, the forcing $\MPB_\a$ is $\delta$-c.c.
\end{lemma}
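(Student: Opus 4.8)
The plan is to prove the stronger statement that $\MPB_\a$ is $\delta$-Knaster, via a $\Delta$-system argument combined with a pigeonhole on the behaviour of conditions on the root. Throughout set $\lambda=\sup_{n<\omega}\kappa_n$, so that $\delta=\lambda^{++}$ is regular; since $\GCH$ holds in $V$ we have $2^\lambda=\lambda^+<\delta$ and $\kappa_n^{<\kappa_{n-1}}=\kappa_n$ for every $n$. Note first that, by the very definition of $\MPB_\a$, every condition $p$ satisfies $|\supp(p)\cap S^\delta_{\kappa_n^+}|<\kappa_{n-1}$ for each $n$, and hence $|\supp(p)|\le\lambda$.

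The key observation, which makes the argument easy, is that the coordinate forcings take their values among ground-model objects. Indeed, fix $\beta\in S^\delta_{\kappa_n^+}$ with $\Phi(\beta)$ a name for a $\kappa_n$-Aronszajn tree. By clause (a) and clause (3) of the definition, $\MPB_{\beta+1}(>\kappa_n)\cong\MPB_\beta(>\kappa_n)$ is $\kappa_n$-directed closed, hence $<\kappa_n$-closed, so it adds no new sequences of length $<\kappa_n$. Every condition of $\dot\MQB_\beta$ is a function $f\colon \Phi(\beta)\to\kappa_{n-1}$ with $|\dom(f)|<\kappa_{n-1}<\kappa_n$, i.e.\ a $<\kappa_n$-sequence of ground-model ordinals; therefore $f\in V$. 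Whether such an $f$ is a condition (the requirement $\Vdash_{\MPB_\beta(<\kappa_n)}\check s\perp_{\Phi(\beta)}\check t$ whenever $f(s)=f(t)$) is decided in $V[G_{\MPB_\beta(>\kappa_n)}]$, but $f$ itself ranges over a set of size at most $\kappa_n^{<\kappa_{n-1}}\cdot\kappa_{n-1}=\kappa_n\le\lambda$ in $V$. Consequently the set $D$ of conditions $p$ for which each $p(\beta)$ is a check $\check f_\beta$ of a ground-model function is dense, and it suffices to show that $D$ contains no antichain of size $\delta$ (an antichain of $\MPB_\a$ refines coordinatewise to one in $D$ of the same size, as extensions of incompatible conditions remain incompatible).

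So suppose $\langle p_i\mid i<\delta\rangle$ are conditions in $D$. First I would apply the $\Delta$-system lemma: the supports $\supp(p_i)$ have size $\le\lambda<\delta$, $\delta=\lambda^{++}$ is regular, and $(\lambda^+)^{\le\lambda}=\lambda^+<\delta$ by $\GCH$, so there is $I\in[\delta]^\delta$ for which $\{\supp(p_i)\mid i\in I\}$ forms a $\Delta$-system with some root $R$, $|R|\le\lambda$. Next I would pigeonhole on the restrictions to the root: since $p_i\in D$, the map $\beta\mapsto p_i(\beta)$ on $R$ assigns to each $\beta$ a check of one of at most $\lambda$ ground-model functions, so there are at most $\lambda^{|R|}\le\lambda^\lambda=2^\lambda=\lambda^+<\delta$ possible values of $p_i\restriction R$. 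By regularity of $\delta$ I may thin to $I'\in[\delta]^\delta$ along which $p_i\restriction R$ is constant.

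Finally, for any $i,j\in I'$ the union $r:=p_i\cup p_j$ is a common extension, which shows the family is pairwise compatible. On $R$ the two conditions agree, and off $R$ their supports are disjoint, so each $r(\gamma)$ is a single check function equal to $p_i(\gamma)$ or $p_j(\gamma)$; the support requirement survives because $|\supp(r)\cap S^\delta_{\kappa_n^+}|\le|\supp(p_i)\cap S^\delta_{\kappa_n^+}|+|\supp(p_j)\cap S^\delta_{\kappa_n^+}|<\kappa_{n-1}$, using that $\kappa_{n-1}$ is regular; and a routine induction on $\gamma\le\a$ shows $r\restriction\gamma\le p_i\restriction\gamma,\,p_j\restriction\gamma$ and hence $r\restriction\gamma\Vdash r(\gamma)\in\dot\MQB_\gamma$. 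Thus $\langle p_i\mid i\in I'\rangle$ is pairwise compatible, contradicting its being an antichain and proving that $\MPB_\a$ is $\delta$-Knaster, in particular $\delta$-c.c. The only genuinely delicate point is the counting step: I expect the main obstacle to be justifying that each coordinate contributes only $\le\lambda$ possibilities, which is exactly what the closure of $\MPB_\beta(>\kappa_n)$ buys us by forcing the specializing conditions to be ground-model functions; granted that, the $\Delta$-system and amalgamation are entirely routine.
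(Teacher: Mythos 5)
The paper offers no proof of this lemma beyond the remark that it ``can be proved easily using a $\Delta$-system argument,'' and your proposal is a reasonable fleshing-out of exactly that argument: the bound $|\supp(p)|\le\lambda$, the $\Delta$-system with root $R$ of size $\le\lambda$ (using $(\lambda^+)^{\lambda}=\lambda^+<\delta$), the pigeonhole on $p_i\restriction R$, and the amalgamation $p_i\cup p_j$ --- which works precisely because the two conditions are \emph{literally equal} on the root, so the tree-incompatibility clause in the definition of $\dot\MQB_\gamma$ never has to be verified for a merged pair of distinct functions --- are all correct. (Two small points: the paper only assumes $2^{\kappa_n}=\kappa_n^+$ in Section 4, not full $\GCH$; the instance $2^\lambda=\lambda^+$ you need does hold, e.g.\ by $\mathrm{SCH}$ above the supercompact $\kappa_0$. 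And the reason an element $f$ of $\dot\MQB_\beta$ lies in $V$ is the $\kappa_{n-1}$-directed closure of the whole two-step iteration $\MPB_\beta(>\kappa_n)\ast\dot\MPB_\beta(\kappa_n)$ over which $\dot\MQB_\beta$ is defined, together with $|\dom(f)|<\kappa_{n-1}$; the closure of the $>\kappa_n$ part alone is not the right forcing to cite.)

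The one genuine gap is the sentence ``Consequently the set $D$ of conditions $p$ for which each $p(\beta)$ is a check $\check f_\beta$ is dense.'' What precedes it only shows that each \emph{individual} coordinate takes values in a ground-model set of size $\le\kappa_n$, which gives density of deciding \emph{one} coordinate. A condition has up to $\lambda$ many coordinates, while $\MPB_\alpha$ is only $\mu$-directed closed (and $\mu=\aleph_0$ in the intended application), so you cannot simply meet $\lambda$ many dense sets below a given $p$; this is where the actual work of the lemma lives. The claim is true, but the proof has to exploit the block structure: for $\beta\in S^\delta_{\kappa_n^+}$ the name $p(\beta)$ depends only on the blocks $\ge n$, i.e.\ on $\MPB_\beta(>\kappa_n)\ast\dot\MPB_\beta(\kappa_n)$, which is $\kappa_{n-1}$-directed closed; so one decides all $<\kappa_{n-1}$ many block-$n$ coordinates by a bookkeeping recursion inside that closed forcing (catching the new support elements it creates), and then proceeds block by block, noting that handling block $n$ never disturbs blocks $m<n$, so the ``diagonal'' condition assembled from the successive stages is a legitimate lower bound even though the full forcing has no closure beyond $\mu$. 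Alternatively, you can avoid $D$ altogether by proving inductively that (with coordinates represented by nice names) $|\MPB_\gamma|\le\lambda^+$ for every $\gamma<\delta$, and then pigeonholing on $p_i\restriction(\sup R+1)$ directly; but some argument of this kind must be supplied --- as written, the density of $D$ does not follow from the single-coordinate observation.
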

The next lemma follows from the above arguments.
\begin{lemma}
\label{cardinals by P-1 and P-delta}
The models $V[G_1]$ and $V[G_\delta]$ have the same cardinals and cofinalities. In particular,  $V[G_\delta] \models$`` for each $n<\omega$,  $\kappa_n=\mu^{+n+1}$
and $\delta = \mu^{+\omega+2}$''.
Furthermore
\[
V[G_\delta] \models \text{~``}\forall n< \omega, 2^\mu = 2^{\kappa_n} =\delta\text{~''}.
\]
\end{lemma}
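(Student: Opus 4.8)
The plan is to show that $\MPB_\delta$, viewed as a forcing over $V$, collapses and singularizes precisely the same $V$-cardinals as its stage-one part $\MPB_1\cong\MQB_0$ does; since $V[G_1]\subseteq V[G_\delta]$, this immediately gives that the two models share the same cardinals and cofinalities, after which the identifications $\kappa_n=\mu^{+n+1}$ and $\delta=\mu^{+\omega+2}$ are read off from Lemma~\ref{first step extension}(a). Throughout write $\lambda=\sup_{n<\omega}\kappa_n$, so that $\delta=\lambda^{++}$ in $V$. The tool I would use repeatedly is the factorization $\MPB_\delta\cong\MPB_\delta(>\kappa_n)\ast(\dot\MPB_\delta(\kappa_n)\ast\dot\MPB_\delta(<\kappa_n))$: by clauses (a)--(c) and Lemma~\ref{chain condition for P-alpha-kappa-n}, the first factor is $\kappa_n$-directed closed (so it adds no new $<\kappa_n$-sequences), while the two-step tail $R_n:=\dot\MPB_\delta(\kappa_n)\ast\dot\MPB_\delta(<\kappa_n)$ is forced to be $\kappa_n$-c.c.

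First I would clear the easy ranges. Cardinals and cofinalities $\le\mu$ are preserved since $\MPB_\delta$ is $\mu$-directed closed, and those $\ge\delta$ are preserved by the $\delta$-c.c.\ (Lemma~\ref{chain conditions lemma for P-delta}). For a fixed $n$, the factorization displays $\MPB_\delta$ as a $\kappa_n$-closed forcing followed by a $\kappa_n$-c.c.\ forcing: the closed factor keeps $\kappa_n$ regular and does not collapse it, and $R_n$ preserves all cardinals and cofinalities $\ge\kappa_n$, so $\kappa_n$ survives as a regular cardinal. As $\lambda$ is the supremum of the surviving $\kappa_n$, it remains a cardinal, and since its cofinality is already $\omega$ it is unchanged. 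The collapses of the intervals $(\kappa_{n-1},\kappa_n)$ are inherited from $\MQB_0\lessdot\MPB_\delta$ and therefore happen in $V[G_\delta]$ exactly as in $V[G_1]$.

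The delicate point, which I expect to be the main obstacle, is the preservation of $\lambda^+$: it lies strictly between every $\kappa_n$ and $\delta$, and the $\delta$-c.c.\ does not protect it (the Easton-style support only yields a $\Delta$-system at $\lambda^{++}=\delta$, not at $\lambda^+$). I would handle it by a cofinality computation. Suppose $\cf^{V[G_\delta]}(\lambda^+)=\rho\le\lambda$; since $\lambda$ is singular, $\rho<\kappa_N$ for some $N$. Factor at $n=N$. In $V[G_{\MPB_\delta(>\kappa_N)}]$ no sequence of length $<\kappa_N$ has been added, so a witness to $\cf(\lambda^+)<\kappa_N$ there would already lie in $V$, contradicting the regularity of $\lambda^+$ in $V$; hence $\cf^{V[G_{\MPB_\delta(>\kappa_N)}]}(\lambda^+)\ge\kappa_N$. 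The remaining factor $R_N$ is $\kappa_N$-c.c.\ and so preserves this cofinality, giving $\cf^{V[G_\delta]}(\lambda^+)\ge\kappa_N>\rho$, a contradiction. Thus $\cf^{V[G_\delta]}(\lambda^+)\ge\kappa_N$ for every $N$, so $\lambda^+$ stays regular and is preserved. Matching the resulting list of preserved versus collapsed $V$-cardinals against the action of $\MQB_0$ shows that $V[G_1]$ and $V[G_\delta]$ have the same cardinals and cofinalities, and Lemma~\ref{first step extension}(a) yields $\kappa_n=\mu^{+n+1}$ and $\delta=\mu^{+\omega+2}$.

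Finally I would compute the continuum. For the upper bound $2^{\kappa_n}\le\delta$ (and $2^\mu\le\delta$), I would count nice names: using $|\MPB_\delta|=\delta$, the $\delta$-c.c., and $\delta^{<\delta}=\delta$ in $V$ (a consequence of $\GCH$), a subset of $\kappa_n$ is named by $\kappa_n$-many antichains each of size $<\delta$, so there are at most $\delta^{\kappa_n}=\delta$ such names. For the lower bound $2^{\kappa_n}\ge\delta$, I would use that $\Phi$ guesses names for $\kappa_m$-Aronszajn trees ($m\le n$) cofinally along $S^\delta_{\kappa_m^+}$; each corresponding nontrivial specialization step adds a fresh generic function into $\kappa_{m-1}\le\kappa_n$, and a density argument shows these yield $\delta$-many distinct subsets of $\kappa_n$ (taking $m=0$ handles $2^\mu\ge\delta$ via new subsets of $\mu=\kappa_{-1}$). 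Combining the bounds gives $2^\mu=2^{\kappa_n}=\delta$, completing the lemma.
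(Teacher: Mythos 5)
Your proposal is correct and uses exactly the ingredients the paper has in mind: the paper gives no explicit proof (it states only that the lemma ``follows from the above arguments,'' meaning the factorization $\MPB_\alpha \cong \MPB_\alpha(>\kappa_n)\ast\dot\MPB_\alpha(\kappa_n)\ast\dot\MPB_\alpha(<\kappa_n)$ with clauses (a)--(c), Lemma \ref{chain condition for P-alpha-kappa-n}, the $\delta$-c.c.\ of Lemma \ref{chain conditions lemma for P-delta}, and Lemma \ref{first step extension}), and your write-up is a faithful elaboration of precisely that route. The one point the paper glosses over entirely --- the preservation of $\lambda^+=(\sup_n\kappa_n)^+$, which is not protected by the $\delta$-c.c. --- is handled correctly by your closed-times-c.c.\ cofinality argument, and your nice-name count and genericity argument for $2^\mu=2^{\kappa_n}=\delta$ are the standard intended computations.
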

By Lemmas \ref{first step extension} and \ref{cardinals by P-1 and P-delta}, we can conclude that:
\begin{lemma}
$V[G_\delta] \models$`` For each $n<\omega$,  there are $\kappa_n$-Aronszajn trees''.
\end{lemma}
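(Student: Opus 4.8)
The plan is to exploit the robustness of special Aronszajn trees under cardinal-preserving extensions, as noted after the Definition in the introduction: if $T$ is a special $\lambda^{+}$-Aronszajn tree witnessed by $f\colon T \to \lambda$, and $W$ is any outer model in which $\lambda$ and $\lambda^{+}$ remain cardinals, then $T$ is still $\lambda^{+}$-Aronszajn in $W$. Indeed, the level sizes stay below $\lambda^{+}$ since $\lambda^{+}$ is preserved, and a cofinal branch $b$ of length $\lambda^{+}$ in $W$ would make $f \restriction b$ an injection of a set of size $\lambda^{+}$ into $\lambda$, which is impossible. So it suffices to produce a special $\kappa_{n}$-Aronszajn tree in $V[G_{1}]$ for each $n < \omega$ and then transfer it to $V[G_{\delta}]$.

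First I would obtain the trees in $V[G_{1}]$. For $n > 0$ this is exactly Lemma \ref{first step extension}(b): since $\kappa_{n} = \kappa_{n-1}^{+}$ and $\kappa_{n-1}^{<\kappa_{n-1}} = \kappa_{n-1}$ holds in $V[G_{1}]$ by the cardinal arithmetic recorded in Lemma \ref{first step extension}(a), Specker's theorem yields a special $\kappa_{n}$-Aronszajn tree. For $n = 0$ we have $\kappa_{0} = \mu^{+}$, and since $\MQB_{0}$ is $\mu$-closed it adds no new bounded subsets of $\mu$, so $\mu^{<\mu} = \mu$ is inherited from $V$; Specker's theorem again provides a special $\mu^{+} = \kappa_{0}$-Aronszajn tree. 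Thus $V[G_{1}]$ carries a special $\kappa_{n}$-Aronszajn tree for every $n < \omega$.

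Finally I would apply Lemma \ref{cardinals by P-1 and P-delta}, which gives that $V[G_{1}]$ and $V[G_{\delta}]$ have the same cardinals and cofinalities; in particular each $\kappa_{n}$ and each $\kappa_{n-1}$ remains a cardinal in $V[G_{\delta}]$, with $\kappa_{n} = \mu^{+n+1}$ in both models. By the preservation principle of the first paragraph, every special $\kappa_{n}$-Aronszajn tree from $V[G_{1}]$ stays $\kappa_{n}$-Aronszajn in $V[G_{\delta}]$, which proves the lemma. I do not anticipate a real obstacle here: the only point demanding attention is that the specializing function must continue to forbid cofinal branches after passing to $V[G_{\delta}]$, and this is secured precisely by the preservation of $\kappa_{n-1}$ and $\kappa_{n}$ furnished by Lemma \ref{cardinals by P-1 and P-delta}.
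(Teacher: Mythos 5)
Your proof is correct and is essentially the paper's own argument: the paper gives no separate proof, deriving the lemma directly from Lemma \ref{first step extension} (special $\kappa_n$-Aronszajn trees exist in $V[G_1]$ by Specker's theorem) together with Lemma \ref{cardinals by P-1 and P-delta} (cardinals and cofinalities are preserved from $V[G_1]$ to $V[G_\delta]$), exactly as you do. The one small caveat is your $n=0$ case, which quietly assumes $\mu^{<\mu}=\mu$ in $V$; the paper's Lemma \ref{first step extension}(b) only asserts the existence of trees for $n>0$, so this extra hypothesis (harmless in the intended applications) is something you add rather than something the paper supplies.
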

The next lemma completes the proof of Theorem \ref{main theorem3}.
\begin{lemma}
\label{A-trees are specialized in the extension}
In $V[G_\delta]$, and for each $n<\omega$, all $\kappa_n$-Aronszajn trees are special.
\end{lemma}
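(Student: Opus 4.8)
The plan is to specialize, for each fixed $n$, every $\kappa_n$-Aronszajn tree of $V[G_\delta]$ by finding a stage of the iteration where the tree appears as a name and where the forcing $\dot{\MQB}_\beta$ was chosen (via the bookkeeping $\Phi$) to be precisely the specialization poset for that name. The key structural fact to exploit is the factorization $\MPB_\delta \cong \MPB_\delta(>\kappa_n) \ast \dot{\MPB}_\delta(\kappa_n) \ast \dot{\MPB}_\delta(<\kappa_n)$ together with the closure/chain-condition data in clauses (a)--(c): below $\kappa_n$ the forcing is $\kappa_{n-1}$-c.c., the $\kappa_n$-block is $\kappa_n$-c.c.\ and $\kappa_{n-1}$-directed closed, and the tail above $\kappa_n$ is $\kappa_n$-directed closed. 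I would first reduce to capturing the tree by a name at a bounded stage.

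\medskip

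\emph{Step 1 (capturing the tree).} Let $T$ be a $\kappa_n$-Aronszajn tree in $V[G_\delta]$ with $\MPB_\delta$-name $\dot{T}$. By Lemma \ref{chain conditions lemma for P-delta} the iteration is $\delta$-c.c., and each level of $T$ has size $<\kappa_n<\delta$, so by a standard reflection argument $\dot{T}$ is (equivalent to) a $\MPB_\gamma$-name for some $\gamma<\delta$ with $\cf(\gamma)=\kappa_n^+$. Since $\Phi^{-1}(\dot{T}) \cap S^\delta_{\kappa_n^+}$ is unbounded in $\delta$, I may fix some $\beta\geq\gamma$ with $\cf(\beta)=\kappa_n^+$ and $\Phi(\beta)=\dot{T}$, where $\dot{T}$ is read as a $\MPB_{\beta+1}(>\kappa_n)\ast\dot{\MPB}_\beta(\kappa_n)\ast\dot{\MPB}_\beta(<\kappa_n)$-name, i.e.\ a $\MPB_\beta$-name, for a $\kappa_n$-Aronszajn tree.

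\medskip

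\emph{Step 2 (the specialization is added).} At stage $\beta$, clause (4) in the definition of $\MPB_\alpha$ forces $\dot{\MQB}_\beta$ (over $\MPB_{\beta+1}(>\kappa_n)$) to be exactly the poset of partial functions $f\colon\Phi(\beta)\to\kappa_{n-1}$ of size $<\kappa_{n-1}$ with $f(t)=f(s)\Rightarrow\ \Vdash_{\MPB_\beta(<\kappa_n)} t\perp_{\Phi(\beta)} s$. Arguing as in Lemma \ref{properties of generalized specializing forcing}(b), a density argument shows that $\bigcup H_\beta$ yields a function $F\colon\kappa_n\times\kappa_{n-1}\to\kappa_{n-1}$ which specializes every $\MPB_\beta(<\kappa_n)$-generic interpretation of $\Phi(\beta)$; in particular it specializes $T$ itself in $V[G_{\beta+1}]$.

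\medskip

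\emph{Step 3 (the specialization survives the tail), which is the main obstacle.} I must show $F$ remains a specializing function in $V[G_\delta]$, which requires that $\kappa_n$ stays a cardinal and that no new comparabilities are introduced by the remaining forcing $\MPB_\delta/\MPB_{\beta+1}$. Cardinal preservation follows from Lemma \ref{cardinals by P-1 and P-delta}. For the rest, the point is that $F$ specializes $T$ as long as $T$ has no \emph{new} pair $x<_T y$ with $F(x)=F(y)$; since $F$ is defined on all of $\kappa_n\times\kappa_{n-1}$ and $T$ lives on this set, it suffices that the tail forcing adds no new cofinal branches (and more generally no new comparabilities along levels) to $T$. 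This is exactly where Lemmas \ref{generalized chain condition lemma}, \ref{lemma: not adding branches - abstract}, and \ref{lemma: closed not adding branches} enter: the relevant tail decomposes so that the part at levels $>\kappa_n$ is $\kappa_n$-directed closed (adding no branches to a $\kappa_n$-tree by a closure argument as in Lemma \ref{lemma: closed not adding branches}), while the $\kappa_n$- and $<\kappa_n$-parts have the appropriate chain conditions so that, by Lemma \ref{lemma: not adding branches - abstract}, no new cofinal branch is introduced. The delicate bookkeeping issue is that the specialization poset $\dot{\MQB}_\beta$ only promises incompatibility relative to $\MPB_\beta(<\kappa_n)$, whereas later stages continue to specialize \emph{other} trees; one must check that the chain-condition analysis of the whole iteration (the inductive use of Lemma \ref{lemma: closed not adding branches} noted in the proof of Lemma \ref{chain condition for P-alpha-kappa-n}) guarantees $T$ stays Aronszajn throughout, so that $F$ never acquires a monochromatic comparable pair. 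Assembling these, $F\in V[G_{\beta+1}]\subseteq V[G_\delta]$ witnesses that $T$ is special, completing the proof for arbitrary $n<\omega$.
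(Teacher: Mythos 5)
Your Steps 1 and 2 coincide with the paper's proof: capture $\dot T$ as a name at some $\beta \in S^\delta_{\kappa_n^+}$ with $\Phi(\beta)=\dot T$, using the $\delta$-c.c.\ (Lemma \ref{chain conditions lemma for P-delta}) and the choice of $\Phi$, and read off the specializing function $F$ added by $\dot{\MQB}_\beta$. Where you diverge is your Step 3, which you call ``the main obstacle''; the paper disposes of it in one sentence (the two models have the same cardinals), and rightly so. The point to notice is that there is nothing for the tail $\MPB_\delta/\MPB_{\beta+1}$ to spoil: for $\beta\in S^\delta_{\kappa_n^+}$ we have $\MPB_{\beta+1}(>\kappa_n)\ast\dot{\MPB}_\beta(\kappa_n)\ast\dot{\MPB}_\beta(<\kappa_n)\cong\MPB_\beta$, so $\Phi(\beta)$ is (equivalent to) a $\MPB_\beta$-name, the underlying set and order of $T$ are completely determined by $G_\beta$, and no ``new comparabilities'' can appear later; moreover ``$x<_T y\Rightarrow F(x)\neq F(y)$'' is a statement about the fixed sets $F$ and $<_T$ and is therefore upward absolute. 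The only thing the tail could destroy is the meaning of ``special $\kappa_n$-Aronszajn tree'' (by collapsing $\kappa_{n-1}$ or $\kappa_n$), and that is exactly what Lemma \ref{cardinals by P-1 and P-delta} rules out. The reason no branch- or comparability-preservation argument is needed here is precisely the design of $\dot{\MQB}_\beta$: it specializes the \emph{name}, i.e., $F(s)=F(t)$ implies $\Vdash_{\MPB_\beta(<\kappa_n)} s\perp_{\Phi(\beta)} t$, so $F$ specializes every interpretation of $\Phi(\beta)$, in particular the one given by $G_\delta$ (this is the analogue of Lemma \ref{properties of generalized specializing forcing}(b) that you cite in Step 2 --- once you have it, Step 3 is already done). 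The preservation lemmas you invoke are indeed essential to the construction, but they belong to the proof of Lemma \ref{chain condition for P-alpha-kappa-n} (keeping the chosen names Aronszajn so that the chain-condition induction goes through and the specializing posets are nontrivial and well behaved), not to this lemma. In short: your argument reaches the right conclusion and is not wrong, but Step 3 addresses a non-problem and obscures the actual mechanism that makes the proof work.
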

\begin{proof}
Suppose $n<\omega$ and $T$ is a $\kappa_n$-Aronszajn tree in $V[G_\delta]$. Let $\dot{T} \in H(\delta)$
be a name for $T$. Then by our choice of $\Phi$, the set
\[
\{\alpha \in S^\delta_{\kappa_n^+} \mid \Phi(\a)=\dot{T}      \}
\]
is unbounded in $\delta$, and hence by Lemma \ref{chain conditions lemma for P-delta},
we can find some $\alpha \in S^\delta_{\kappa_n^+}$ such that $\Phi(\a)=\dot{T}$,
and $\Phi(\a)$ is a  $\MPB_{\a+1}( > \kappa_n) \ast \dot{\MPB}_\a(\kappa_n) \ast \dot{\MPB}_\a(< \kappa_n) $-name for a $\kappa_n$-Aronszajn tree.
By our definition of the forcing at step $\alpha$, we can find a function $F: T \to \kappa_{n-1}$
which is a specializing function for $T$ in $V[G_{\alpha+1}]$. As the models $V[G_\delta] \supseteq V[G_{\alpha+1}]$
have the same cardinals, $F$ witnesses that
$T$ is special in $V[G_{\delta}]$.
\end{proof}
\section{The Special Aronszajn Tree Property at successor of every regular cardinal}
\label{section4}
In this section we prove Theorem \ref{main theorem}.
Recall from Section \ref{section3}, that we essentially proved the following lemma:
\begin{lemma}
\label{extended main theorem3} Assume $\alpha$ is a limit ordinal  and $\kappa_1 < \dots < \kappa_n < \dots$ are indestructible supercompact cardinals above $\aleph_\alpha$.
Then there is an $\aleph_{\alpha+1}$-directed closed forcing notion $\MPB(\alpha, \langle \kappa_n \mid 1<n<\omega  \rangle)$ of size $\delta=(\sup_{n<\omega} \kappa_n)^{++}$ such that the following hold in a generic extension
by $\MPB(\alpha, \langle \kappa_n \mid 1< n<\omega  \rangle)$:
\begin{itemize}
\item [(a)] For each $1< n<\omega, \aleph_{\alpha+n}=\kappa_n$ and $\delta=\aleph_{\alpha+\omega+2}$.
\item [(b)] $\forall ~1 \leq n < \omega,~ 2^{\aleph_{\alpha+n}} = \delta$.
\item [(c)] The Special Aronszajn Tree Property holds at all $\aleph_{\alpha+n}$'s, $1 <n < \omega$.
\end{itemize}
\end{lemma}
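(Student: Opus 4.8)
The plan is to observe that $\MPB(\alpha,\langle \kappa_n \mid 1<n<\omega\rangle)$ is nothing but the iteration $\MPB_\delta$ constructed in Section \ref{section3}, carried out with the regular cardinal $\mu$ of that section instantiated as $\mu=\aleph_{\alpha+1}$ and with the increasing sequence of supercompacts taken to be $\langle \kappa_n \mid 1<n<\omega\rangle$; the general-$\mu$ formulation of Section \ref{section3} was set up precisely to permit this (cf.\ the footnote there). Since $\alpha$ is a limit ordinal, $\aleph_\alpha$ is singular and $\mu=\aleph_{\alpha+1}$ is regular, while each $\kappa_n$ with $n>1$, being inaccessible above $\aleph_\alpha$, lies above $\mu$; so the standing hypothesis ``$\mu<\kappa$ regular, $\langle \kappa_n\rangle$ increasing supercompacts above $\mu$'' of Section \ref{section3} is met. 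First I would invoke Lemma \ref{extended laver forcing} to arrange indestructibility, so that each $\kappa_n$ stays supercompact under $\kappa_n$-directed closed forcing and $2^{\kappa_n}=\kappa_n^+$, and then set $\MPB(\alpha,\langle\kappa_n\rangle):=\MPB_\delta$ with $\delta=(\sup_{n}\kappa_n)^{++}$.

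Next I would record the structural facts. By the closure clauses of the construction, every component of $\MPB_\delta$ is at least $\mu$-directed closed, so the whole iteration is $\mu$-directed closed, i.e.\ $\aleph_{\alpha+1}$-directed closed; in particular it adds no bounded subset of $\aleph_{\alpha+1}$ and preserves every cardinal and cofinality $\le\aleph_{\alpha+1}$, so the entire (singular) structure at and below $\aleph_\alpha$ is untouched and $\aleph_{\alpha+1}$ remains a cardinal. As an iteration of length $\delta$ of forcing notions in $H(\delta)$ which is $\delta$-c.c.\ (Lemma \ref{chain conditions lemma for P-delta}), it has size $\delta$. For (a) and (b), the first factor $\MQB_0=\Col(\mu,<\kappa_2)\times\prod_{1<n<\omega}\Col(\kappa_n,<\kappa_{n+1})$ turns $\kappa_2$ into $\mu^+=\aleph_{\alpha+2}$ and $\kappa_n$ into $\kappa_{n-1}^+=\aleph_{\alpha+n}$ for $n>1$ (the supercompact $\kappa_1$, lying in $(\mu,\kappa_2)$, is absorbed by $\Col(\mu,<\kappa_2)$ and plays no further role in the module), with $\delta=\aleph_{(\alpha+1)+\omega+2}=\aleph_{\alpha+\omega+2}$ since $1+\omega=\omega$. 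The analogues of Lemmas \ref{first step extension} and \ref{cardinals by P-1 and P-delta} then show that these values, and the equalities $2^{\mu}=2^{\kappa_n}=\delta$, are preserved throughout the remainder of the iteration.

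For (c), the existence of a $\kappa_n$-Aronszajn tree for each $n>1$ follows from Specker's theorem \cite{specker} applied in $V[G_1]$, exactly as in Lemma \ref{first step extension}(b). For specialization, given a $\kappa_n$-Aronszajn tree $T\in V[G_\delta]$ and a name $\dot T\in H(\delta)$ for it, the bookkeeping $\Phi$ (whose preimages meet $S^\delta_{\kappa_n^+}$ cofinally) together with the $\delta$-c.c.\ yields a stage $\gamma\in S^\delta_{\kappa_n^+}$ with $\Phi(\gamma)=\dot T$ a name for a $\kappa_n$-Aronszajn tree; the specialization forcing inserted at stage $\gamma$ produces a map $F\colon T\to\kappa_{n-1}$ in $V[G_{\gamma+1}]$, which remains a specialization in the cardinal-preserving extension $V[G_\delta]$. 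This is verbatim the argument of Lemma \ref{A-trees are specialized in the extension}.

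The main obstacle is the same one as in Section \ref{section3}: showing that the specialization forcing inserted at each stage is $\kappa_n$-c.c., so that $\kappa_n$ is preserved and $F$ genuinely maps into $\kappa_{n-1}$. This is the content of Lemma \ref{chain condition for P-alpha-kappa-n}, which rests on Lemma \ref{generalized chain condition lemma} and hence requires $\kappa_n$ to be weakly compact in the relevant intermediate model. Here is where supercompactness and its indestructibility are used: the tail $\MPB_\a(>\kappa_n)$ is $\kappa_n$-directed closed, so $\kappa_n$ stays supercompact, a fortiori weakly compact, after forcing with it. One must then check that the triple governing the stage-$\kappa_n$ forcing is $(\mu,\kappa_n)$-suitable, which, via Lemmas \ref{lemma: not adding branches - abstract} and \ref{lemma: closed not adding branches}, amounts to verifying inductively that no quotient of the iteration resurrects a cofinal branch through a tree that was guessed to be Aronszajn. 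Since none of these arguments used $\mu=\aleph_0$ — only the regularity of $\mu$ and the large-cardinal properties of the $\kappa_n$ — the entirety of Section \ref{section3} transfers unchanged with $\mu=\aleph_{\alpha+1}$, which establishes the lemma.
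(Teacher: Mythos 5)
Your proposal is correct and is exactly the paper's intended argument: the paper offers no separate proof of Lemma \ref{extended main theorem3} beyond the remark that Section \ref{section3} ``essentially proved'' it, and your write-up simply instantiates that construction with $\mu=\aleph_{\alpha+1}$ and the given supercompacts, checking that no step of Section \ref{section3} (in particular Lemmas \ref{chain condition for P-alpha-kappa-n}, \ref{generalized chain condition lemma}, \ref{chain conditions lemma for P-delta}, \ref{cardinals by P-1 and P-delta} and \ref{A-trees are specialized in the extension}) used $\mu=\aleph_0$ rather than mere regularity of $\mu$. The only superfluous step is re-invoking Lemma \ref{extended laver forcing}, since indestructibility is already part of the hypothesis; this is harmless.
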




Now suppose that $\langle \kappa_\xi \mid 0<\xi \in ON \rangle$ is an increasing and continuous sequence of cardinals, such that $\kappa_{\xi+1}$ is a supercompact cardinal, for every  ordinal $\xi$,
and set $\kappa_0=\aleph_0$. We also assume that no limit point of the sequence is an inaccessible cardinal.
Let
\[
 \langle   \langle \MPB_\alpha \mid \alpha \in ON, \alpha=0 \text{~or~} \lim(\alpha) \rangle, \langle  \dot{\MQB}_\alpha \mid \alpha \in ON, \alpha=0 \text{~or~}\lim(\alpha)  \rangle\rangle
\]
be the reverse Easton iteration of forcing notions such that
\begin{enumerate}
\item $\MPB_0= \{ 1_{\MPB_0} \}$ is the trivial forcing.
\item $\Vdash_{\MPB_0}$`` $\dot\MQB_0= \mathbb{L}(\aleph_1,  \langle \kappa_{n} \mid 0< n<\omega  \rangle) \ast \dot{\MPB}(0, \langle \kappa_n \mid 0<n<\omega  \rangle)$.
\item For each limit ordinal $\alpha >0$,
\begin{center}
$\Vdash_{\MPB_\alpha}$`` $\dot\MQB_\alpha = \mathbb{L}(\kappa_\alpha^+,  \langle \kappa_{\alpha+n} \mid 0< n<\omega  \rangle) \ast \dot{\MPB}(\alpha, \langle \kappa_{\alpha+n} \mid 0< n<\omega  \rangle)$''.
\end{center}

\end{enumerate}
Note that at each step $\alpha$, the forcing notion $\MPB_\alpha$ has size less than $\kappa_{\alpha+1}$, so cardinals $\kappa_{\alpha+n}, 0<n<\omega$,
remain supercompact in the generic extension by $\MPB_\alpha$. Therefore, the forcing notion $\MQB_\alpha$ is well-defined in $V[G_{\MPB_\alpha}]$.

Finally let $\MPB$ be the direct limit of the above forcing construction and let $G$ be $\MPB$-generic over $V$.
\begin{lemma}
The following hold in $V[G]$:
\begin{itemize}
\item [(a)] $\forall \xi \in ON, \aleph_\xi=\kappa_\xi$.
\item [(b)] For each limit ordinal $\alpha$  and each $1<n<\omega$,  $2^{\aleph_{\alpha+n}}= \kappa_{\alpha+\omega+2}= \aleph_{\alpha+\omega+2}$.
\end{itemize}
\end{lemma}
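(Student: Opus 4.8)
The plan is to prove both clauses simultaneously by induction along the blocks of the reverse Easton iteration, transporting the single-block analysis of Lemma~\ref{extended main theorem3} up to the full class extension by means of the closure of the tails. Throughout write $G_\alpha = G \cap \MPB_\alpha$, so that $V[G_{\alpha+\omega}] = V[G_\alpha \ast \MQB_\alpha]$ is the extension obtained after the block at a limit stage $\alpha$ (the next nontrivial stage after a limit $\alpha$ being $\alpha+\omega$).

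Two structural facts drive the induction. Fix a stage $\alpha$ (either $0$ or a limit ordinal) and factor $\MPB \cong \MPB_\alpha \ast \dot{\MQB}_\alpha \ast \dot{\MPB}^{\mathrm{tail}}_\alpha$, where $\MPB^{\mathrm{tail}}_\alpha$ is the part of the iteration strictly above $\alpha+\omega$. First, as remarked after the definition of the iteration, $|\MPB_\alpha| < \kappa_{\alpha+1}$, so by L\'evy--Solovay each $\kappa_{\alpha+n}$ (for $0<n<\omega$) is still supercompact in $V[G_\alpha]$; after forcing with the Laver factor $\mathbb{L}(\kappa_\alpha^+,\langle \kappa_{\alpha+n}\mid 0<n<\omega\rangle)$ it becomes indestructibly supercompact and $\GCH$ is retained at it (Lemma~\ref{extended laver forcing}). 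Hence the hypotheses of Lemma~\ref{extended main theorem3} hold in $V[G_\alpha][\mathbb{L}]$, and the block $\MQB_\alpha$ realizes conclusions (a)--(c) there. Second, the tail $\MPB^{\mathrm{tail}}_\alpha$ is $\kappa_{\alpha+\omega}$-directed closed over $V[G_{\alpha+\omega}]$: its first nontrivial factor $\MQB_{\alpha+\omega}$ is $\kappa_{\alpha+\omega}$-directed closed and every later factor is even more closed, so the reverse Easton support keeps the whole tail $\kappa_{\alpha+\omega}$-directed closed. Consequently the tail adds no new subset of any ordinal below $\kappa_{\alpha+\omega}$ and preserves all cardinals and cofinalities up to $\kappa_{\alpha+\omega}$.

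For clause (a) I would show by induction on limit ordinals $\alpha$ that $\aleph_\xi^{V[G]} = \kappa_\xi$ for all $\xi \le \alpha+\omega$. At a limit $\alpha$, the value $\aleph_\alpha^{V[G]} = \kappa_\alpha$ follows from the continuity of $\langle\kappa_\xi\rangle$ and the induction hypothesis. Applying Lemma~\ref{extended main theorem3}(a) to $\MQB_\alpha$ gives $\aleph_{\alpha+n} = \kappa_{\alpha+n}$ for $0<n<\omega$ in $V[G_{\alpha+\omega}]$, whence $\aleph_{\alpha+\omega} = \kappa_{\alpha+\omega}$. Since $\MPB^{\mathrm{tail}}_\alpha$ preserves every cardinal $\le\kappa_{\alpha+\omega}$ and introduces no new cardinal below it, these equalities survive to $V[G]$. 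As every ordinal $\xi$ lies in some interval $[\alpha,\alpha+\omega]$ of this form, clause (a) follows. For clause (b), fix a limit $\alpha$ and $1<n<\omega$. By Lemma~\ref{extended main theorem3}(b), in $V[G_{\alpha+\omega}]$ we have $2^{\aleph_{\alpha+n}} = \delta = \aleph_{\alpha+\omega+2}$, which by clause (a) is the cardinal $\kappa_{\alpha+\omega+2}$ of $V[G]$. To transport the equality I would argue that the remainder of the iteration neither enlarges $P(\kappa_{\alpha+n})$ nor changes the cardinality of $\delta$: as $\MPB^{\mathrm{tail}}_\alpha$ is $\kappa_{\alpha+\omega}$-directed closed and $\kappa_{\alpha+n}<\kappa_{\alpha+\omega}$, it adds no new subset of $\kappa_{\alpha+n}$, so $P(\kappa_{\alpha+n})$ is already computed in $V[G_{\alpha+\omega}]$; and since $\delta=\kappa_{\alpha+\omega+2}$ remains a cardinal in $V[G]$ by clause (a), a bijection between $P(\kappa_{\alpha+n})$ and $\delta$ present in $V[G_{\alpha+\omega}]$ still witnesses $2^{\aleph_{\alpha+n}} = \kappa_{\alpha+\omega+2}$ in $V[G]$.

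The delicate point is precisely this transport across block boundaries. One must verify that the value $\delta=\kappa_{\alpha+\omega+2}$ assigned as $2^{\aleph_{\alpha+n}}$ by the block $\MQB_\alpha$ is preserved by \emph{all} later blocks, i.e.\ that the successive tails are closed enough to fix simultaneously the powerset $P(\kappa_{\alpha+n})$ and the target cardinal $\kappa_{\alpha+\omega+2}$, while the earlier blocks still perform the collapses of the intervals $(\kappa_{\alpha+m},\kappa_{\alpha+m+1})$ required for clause (a). Reconciling these two demands --- and in particular confirming that the smallness $|\MPB_\alpha|<\kappa_{\alpha+1}$, which keeps the supercompacts alive to run each block, fits together with the $\kappa_{\alpha+\omega}$-directed closure of the tails so as to stabilize both the cardinal structure and the continuum function throughout the class-length iteration --- is where I expect the main work of the proof to lie.
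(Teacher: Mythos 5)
Your overall strategy --- factoring $\MPB \cong \MPB_\alpha \ast \dot{\MQB}_\alpha \ast \dot{\MPB}_{(\alpha,\infty)}$ at each block, invoking Lemma \ref{extended main theorem3} for the block, and using the directed closure of the tail to preserve what the block achieved --- is the argument the paper intends (the paper states this lemma without proof and then uses exactly this factorization and the closure of $\MPB_{(\alpha,\infty)}$ for the $\TSP$ transfer). Your handling of clause (a), and of the fact that the tail adds no new subsets of $\kappa_{\alpha+n}$, is fine.

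The gap is in the final step of clause (b). The cardinal $\delta$ of the block at $\alpha$ is $(\sup_n \kappa_{\alpha+n})^{++}=\kappa_{\alpha+\omega}^{++}$ as computed in $V[G_{\alpha+\omega}]$; this is an ordinal strictly below the next supercompact $\kappa_{\alpha+\omega+1}$, and in particular it is \emph{not} the ordinal $\kappa_{\alpha+\omega+2}$ of the ground-model sequence. Your sentence ``which by clause (a) is the cardinal $\kappa_{\alpha+\omega+2}$ of $V[G]$'' conflates $\aleph_{\alpha+\omega+2}^{V[G_{\alpha+\omega}]}$ with $\aleph_{\alpha+\omega+2}^{V[G]}$, which are different ordinals. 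Worse, $\delta$ does not survive as a cardinal: the tail is only $\kappa_{\alpha+\omega}^{+}$-directed closed, and its very first block $\mathbb{L}(\kappa_{\alpha+\omega}^{+},\dots)\ast\dot{\MPB}(\alpha+\omega,\dots)$ contains the L\'evy collapse $\Col(\aleph_{\alpha+\omega+1},<\kappa_{\alpha+\omega+1})$, which collapses every cardinal in the interval $(\kappa_{\alpha+\omega}^{+},\kappa_{\alpha+\omega+1})$ --- in particular $\delta$ --- to $\aleph_{\alpha+\omega+1}$. So closure of the tail does give $P(\aleph_{\alpha+n})^{V[G]}=P(\aleph_{\alpha+n})^{V[G_{\alpha+\omega}]}$, but the cardinality of this set in $V[G]$ is $|\delta|^{V[G]}=\aleph_{\alpha+\omega+1}$, not $\aleph_{\alpha+\omega+2}$. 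The ``delicate point'' you flag at the end is thus not merely delicate: the two demands you list (the later blocks must collapse the interval up to the next supercompact for clause (a), and must preserve $\delta$ as a cardinal for clause (b)) are incompatible, and a correct write-up must compute $|\delta|$ after the collapse at stage $\alpha+\omega$ rather than assert its preservation; accordingly the value of $2^{\aleph_{\alpha+n}}$ in $V[G]$, and the reading of the indices $\kappa_{\alpha+\omega+1},\kappa_{\alpha+\omega+2}$ in the statement, need to be reconciled before the proof can close.
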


Let us show that in the generic extension by $\MPB$, the Special Aronszajn Tree Property holds at the successor of every regular cardinal.
Thus assume $\alpha$ is a limit ordinal (the case $\alpha=0$ is similar). We can write the forcing notion $\MPB$ as
$\MPB= \MPB_\alpha * \dot{\MQB}_\alpha * \dot{\MPB}_{(\a, \infty)}$, where, the forcing notion  $\MPB_{(\a, \infty)}$ is defined
in $V[G_{\MPB_\alpha * \dot{\MQB}_\alpha}]$, in the same way that we defined $\MPB$, using the forcing notions
$\MPB_\beta, \dot{\MQB}_\beta$, where $\alpha < \beta$ is a limit ordinal. In particular, we have
\begin{center}
$\Vdash_{\MPB_\alpha * \dot{\MQB}_\alpha}$`` $\dot{\MPB}_{(\a, \infty)}$ is $\kappa_{\alpha+\omega+1}$-closed''.
\end{center}
By Lemma \ref{extended main theorem3},
\begin{center}
$\Vdash_{\MPB_\alpha * \dot{\MQB}_\alpha}$``$\bigwedge_{1 <n<\omega} \TSP(\aleph_{\alpha+n})\text{''}$.
\end{center}
Since $\Vdash_{\MPB_\alpha * \dot{\MQB}_\alpha}$``the forcing notion $\MPB_{(\a, \infty)}$ does not add any new $\kappa_{\alpha+\omega}$-sequences'',
we have
\begin{center}
 $\Vdash_{\MPB}$``$\bigwedge_{1 <n<\omega} \TSP(\aleph_{\alpha+n})$''.
\end{center}
The result follows immediately. \hfill$\Box$

We close the paper with the following question, which is an analogue of Magidor's question regarding the Tree Property.
\begin{question}
Is it consistent, relative to the existence of large cardinals, that Special Aronszajn Tree Property holds for all uncountable regular cardinals?
\end{question}
Let us also remark that the following question is still open:
\begin{question}
Let $\lambda$ be  successor of a singular cardinal. Is $\TSP(\lambda)$ consistent? I.e.,\ is it consistent that there is a $\lambda$-Aronszajn tree, and every $\lambda$-Aronszajn tree is special?
\end{question}
\section{Acknowledgments}
We would like to thank John Krueger for pointing us to an error in a previous version of the paper. 

We would like to thank the anonymous referee for many useful remarks and in particular for pointing us to some problems in a previous version of the paper and helping us to improve the readability of this paper. We would like to thank the referee for pointing an error in the argument for Claim \ref{separation claim} and suggesting a way to fix the proof.

\providecommand{\bysame}{\leavevmode\hbox to3em{\hrulefill}\thinspace}
\providecommand{\MR}{\relax\ifhmode\unskip\space\fi MR }
\providecommand{\MRhref}[2]{%
  \href{http://www.ams.org/mathscinet-getitem?mr=#1}{#2}
}
\providecommand{\href}[2]{#2}


\begin{thebibliography}{10}

\bibitem{Abraham1983}
Uri Abraham, \emph{Aronszajn trees on {$\aleph _{2}$} and {$\aleph _{3}$}},
  Ann. Pure Appl. Logic \textbf{24} (1983), no.~3, 213--230. \MR{717829}

\bibitem{baumgartner}
James Baumgartner, Jerome Malitz, and William Reinhardt, \emph{Embedding trees
  in the rationals}, Proceedings of the National Academy of Sciences
  \textbf{67} (1970), no.~4, 1748--1753.

\bibitem{baumgartner1983}
James~E. Baumgartner, \emph{Iterated forcing}, Surveys in set theory, London
  Math. Soc. Lecture Note Ser., vol.~87, Cambridge Univ. Press, Cambridge,
  1983, pp.~1--59. \MR{823775}

\bibitem{hauser}
Kai Hauser, \emph{Indescribable cardinals and elementary embeddings}, J.
  Symbolic Logic \textbf{56} (1991), no.~2, 439--457. \MR{1133077}

\bibitem{jech2003}
Thomas Jech, \emph{Set theory}, Springer Monographs in Mathematics,
  Springer-Verlag, Berlin, 2003, The third millennium edition, revised and
  expanded. \MR{1940513}

\bibitem{laver}
Richard Laver, \emph{Making the supercompactness of $\kappa$ indestructible
  under $\kappa$-directed closed forcing}, Israel Journal of Mathematics
  \textbf{29} (1978), no.~4, 385--388.

\bibitem{laver-shelah}
Richard Laver and Saharon Shelah, \emph{The $\aleph_2$-souslin hypothesis},
  Transections of the American Mathematical Society \textbf{264} (1981), no.~2,
  411--417.

\bibitem{MagidorShelah1996}
Menachem Magidor and Saharon Shelah, \emph{The tree property at successors of
  singular cardinals}, Arch. Math. Logic \textbf{35} (1996), no.~5-6, 385--404.
  \MR{1420265}

\bibitem{Mitchell1972}
William Mitchell, \emph{Aronszajn trees and the independence of the transfer
  property}, Ann. Math. Logic \textbf{5} (1972/73), 21--46. \MR{0313057}

\bibitem{Neeman2014}
Itay Neeman, \emph{The tree property up to {$\aleph_{\omega+1}$}}, J. Symb.
  Log. \textbf{79} (2014), no.~2, 429--459. \MR{3224975}

\bibitem{Rinot2017}
Assaf Rinot, \emph{Higher {S}ouslin trees and the {GCH}, revisited}, Adv. Math.
  \textbf{311} (2017), 510--531. \MR{3628222}

\bibitem{ShelahThomas1997}
Saharon Shelah and Simon Thomas, \emph{The cofinality spectrum of the infinite
  symmetric group}, J. Symbolic Logic \textbf{62} (1997), no.~3, 902--916.
  \MR{1472129}

\bibitem{SolovayTennenbaum}
R.~M. Solovay and S.~Tennenbaum, \emph{Iterated {C}ohen extensions and
  {S}ouslin's problem}, Ann. of Math. (2) \textbf{94} (1971), 201--245.
  \MR{0294139}

\bibitem{specker}
E.~Specker, \emph{Sur un probl\`eme de {S}ikorski}, Colloquium Math. \textbf{2}
  (1949), 9--12. \MR{0039779}

\bibitem{Unger2012}
Spencer Unger, \emph{Fragility and indestructibility of the tree property},
  Arch. Math. Logic \textbf{51} (2012), no.~5-6, 635--645. \MR{2945572}

\bibitem{Unger2015}
\bysame, \emph{Fragility and indestructibility {II}}, Ann. Pure Appl. Logic
  \textbf{166} (2015), no.~11, 1110--1122. \MR{3385103}

\end{thebibliography}
\end{document}